\newtheorem{thm}{Theorem}[section]
\newtheorem{Pro}{Proposition}[section]
\newtheorem{lemma}{Lemma}[section]
\newtheorem*{CorN}{Corollary}
\newtheorem{Mythm}{Theorem}
\theoremstyle{definition}
\newtheorem{Def}{Definition}[section]
\theoremstyle{definition}
\newtheorem{Rem}{Remark}[section]
\numberwithin{equation}{section}
\begin{document}
\title[The spectrum of the number of geodesics]{On the spectrum of the number of geodesics \\ and tight geodesics in the curve complex}
\author{
    Ryo Matsuda \and  Kanako Oie  \and Hiroshige Shiga    
}
\address{Department of Mathematics, Faculty of Science, Kyoto University}
\email{matsuda.ryou.82c@st.kyoto-u.ac.jp}
\address{Graduate School of Humanities and Sciences, Nara Women's
University}
\email{yak\_oie@cc.nara-wu.ac.jp}
\address{Professor Emeritus at Tokyo Institute of Technology (Science Tokyo)} 
\email{shiga.hiroshige.i35@kyoto-u.jp}
\date{\today}    
\keywords{Curve complex, tight geodesic}
\subjclass[2010]{Primary 57K20; Secondary 57K99}
\thanks{The first author was partially supported by JSPS Grant 23KJ1196.
The second author was partially supported by JST SPRING, Grant Number JPMJSP2115.
The third author was partially supported by the Ministry of Education, Science, Sports
and Culture, Japan;
Grant-in-Aid for Scientific Research (C), 22K03344.}

\begin{abstract}
Let $S$ be an oriented surface of type $(g, n)$.
We are interested in geodesics in the curve complex $\mathcal C(S)$ of $S$.
In general, two $0$-simplexes in $\mathcal C(S)$ have infinitely many geodesics connecting the two simplexes while another geodesics called tight geodesics are always finitely many.
On the other hand, we may find two $0$-simplexes in $\mathcal C(S)$ so that they have only finitely many geodesics between them.

In this paper, we consider the spectrum of the number of geodesics with length $d (\geq 2)$ in $\mathcal C(S)$ and tight geodesics, which is denoted by $\mathfrak{Sp}_d(S)$ and $\mathfrak{Sp}_d^T(S)$, respectively.

In our main theorem, it is shown that $\mathfrak{Sp}_d(S)\subset\mathfrak{Sp}_d^T(S)$ in general, but $\mathfrak{Sp}_2(S)=\mathfrak{Sp}_2^T(S)$.
Moreover, we show that $\mathfrak{Sp}_2(S)$ and $\mathfrak{Sp}_2^T(g, n)$ are completely determined in terms of $(g, n)$ and we obtain an isospectrum analogue for the curve complex of closed surfaces.
\end{abstract}
\maketitle
\section{Introduction}
Let $S$ be an oriented surface of type $(g, n)$, that is, it is of genus $g$ with $n$ punctures.
A simple closed curve on $S$ is called \emph{essential} if it is non-trivial and does not bound a puncture of $S$.

Other than exceptional surfaces, the set of homotopy classes of essential simple closed curves on $S$ makes a simplicial complex called \emph{the curve complex} of $S$ denoted by $\mathcal C(S)$ or $\mathcal C(g, n)$.
In the curve complex $\mathcal C(S)$, a $0$-simplex is a free homotopy class of an essential simple closed curve  on $S$.
Two $0$-simplexes of $\mathcal C(S)$ is joined by an edge if they admit disjoint representatives.

We shall abuse notation by writing $\alpha$ to mean both the simple closed curve $\alpha$ and its homotopy class.
By giving the length 1 to an edge in $\mathcal C(S)$, we see that $\mathcal C(S)$ is a metric space and we denote by $d_{\mathcal C(S)}$ the distance in $\mathcal C(S)$ given by the metric.
A path $\Gamma$ from $\alpha$ to $\beta$ in $\mathcal C(S)$ is called a \emph{geodesic} if the length of $\Gamma$ is equal to $d_{\mathcal C(S)}(\alpha, \beta)$.

The curve complex is introduced by W. Harvey \cite{Harvey}.
After the introduction, the importance of the curve complex is widely recognized.
Especially, H. Masur and Y. Misky \cite{MM} showed that the curve complex is Gromov hyperbolic.
In \cite{MM2}, they introduced the notion of tight geodesics and established a finiteness theorem for such geodesics.
Their research for the curve complex  plays an important role in solving the famous ending lamination conjecture on hyperbolic 3-manifolds in a paper J. Brock, D. Canary and Y. Minsky \cite{BCM}.
R. C. H. Webb \cite{Webb} employed the spectrum of tight geodesics to estimate the stable length of mapping class group elements and he found an algorithm that computes the stable length in finite time. These observations highlight the importance of tight geodesics and their spectrum in studying the mapping class group.

In this paper, we consider a structure of the set of geodesics in the curve complex.
Generally, geodesics between two points in $\mathcal C(S)$ are not unique.
In fact, there are infinitely many geodesics in many cases.
On the other hand, we may find two points in $\mathcal C(S)$ such that there are only finitely many geodesics between them in $\mathcal{C}(S)$.

For a given $d\geq 2$, we consider the spectrum of the number of geodesics with length $d$ in $\mathcal C(S)$, which is denoted by $\mathfrak{Sp}_d(g, n)$ or $\mathfrak{Sp}_d(S)$.
That is, $\mathfrak{Sp}_d(S)$ is the set of the number of geodesics of length $d$ connecting two points in $\mathcal C(S)$.
More precisely, $k\in \mathbb N$ contains in $\mathfrak{Sp}_d(S)$ if and only if there are two points $\alpha, \beta\in \mathcal C(S)$ such that $d_{\mathcal C(S)}(\alpha, \beta)=d$ and $|\mathcal G_S(\alpha, \beta)|=k$, where $\mathcal G_S(\alpha, \beta)$ is the set of geodesics connecting $\alpha$ and $\beta$ in $\mathcal C(S)$ and $|A|$ is the cardinal number of a set $A$.

We also consider the spectrum of the number of tight geodesics (see Definition \ref{Def:Tight} for the definition of tight geodesics) of length $d$ which is denoted by $\mathfrak{Sp}_d^T(g, n)$ or $\mathfrak{Sp}_d^T(S)$.
Namely, $k\in \mathbb N$ contains in $\mathfrak{Sp}_d^T(S)$ if and only if there are two points $\alpha, \beta\in \mathcal C(S)$ such that $d_{\mathcal C(S)}(\alpha, \beta)=d$ and $|\mathcal G_S^T(\alpha, \beta)|=k$, where $\mathcal G_S^T(\alpha, \beta)$ is the set of tight geodesics connecting $\alpha$ and $\beta$ in $\mathcal C(S)$. 

\medskip

The set $\mathcal G_S(\alpha, \beta)$, can be an infinite set.
However, for some pairs of $\alpha, \beta$, $\mathcal G_S(\alpha, \beta)$ is a finite set.
Hence, the spectrum $\mathfrak{Sp}_d(S)$ contains some positive integers.
In fact, we know the followings:
\begin{thm}[Ido-Jang-Kobayashi \cite{IJK}]
\label{thm:IJK}
		\item  For each $d\in \mathbb N$, $\mathfrak{Sp}_d(S)\ni 1$.
\end{thm}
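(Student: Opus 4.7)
The plan is to produce, for each $d \geq 2$, an explicit pair $\alpha, \beta \in \mathcal{C}(S)$ with $d_{\mathcal{C}(S)}(\alpha, \beta) = d$ admitting exactly one geodesic. I would proceed by induction on $d$, building up the geodesic one vertex at a time and at each step forcing the new vertex to be the unique legal extension.

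For the base case $d = 2$, I would choose $\alpha, \beta$ so that their union fills the complement (in $S$) of a regular neighborhood of a single essential simple closed curve $\gamma$. Any common neighbor of $\alpha$ and $\beta$ must be disjoint from both, hence disjoint from $\alpha \cup \beta$, hence isotopic into the complement of $\alpha \cup \beta$; since the only essential curve living in that complement is $\gamma$, uniqueness of the midpoint follows. One also needs to verify that $\alpha$ and $\beta$ themselves are not disjoint, which is immediate from the filling hypothesis on their complement.

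For the inductive step, suppose $\alpha = v_0, v_1, \ldots, v_d = \beta$ is the unique geodesic of length $d$. To extend it, I would pick $v_{d+1}$ disjoint from $v_d$ and satisfying two conditions: (a) $d_{\mathcal{C}(S)}(\alpha, v_{d+1}) = d+1$, i.e.\ no shortcut exists, and (b) the only vertex $w$ simultaneously disjoint from $v_{d+1}$ and at distance $d$ from $\alpha$ is $v_d$ itself. Condition (b) can be arranged by choosing $v_{d+1}$ so that it fills $S \setminus v_d$ sufficiently, and condition (a) is standard: one takes $v_{d+1}$ to be $\phi^N(v_{d-1}')$ for a curve $v_{d-1}'$ disjoint from $v_d$ and a pseudo-Anosov $\phi$ supported on a subsurface facing the tail of the geodesic, with $N$ large enough that Masur--Minsky subsurface projection forces any path from $\alpha$ to $v_{d+1}$ to pass through $v_d$. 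Combined with the inductive hypothesis, this produces a unique geodesic $v_0, \ldots, v_d, v_{d+1}$ of length $d+1$.

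The main obstacle is condition (b): ruling out not just alternative disjoint neighbors of $v_{d+1}$, but alternative geodesics that diverge from $v_0, \ldots, v_d$ earlier and rejoin later. My expectation is that this must be controlled via subsurface projections. By choosing the iterates of $\phi$ large, the projection of $v_{d+1}$ to subsurfaces associated with the earlier vertices $v_i$ becomes large in a quantified sense, forcing any competing geodesic to also pass through each $v_i$; then uniqueness globally reduces to uniqueness of each one-step extension, which is exactly what the filling construction achieves. The delicate point in a fully rigorous write-up is the bookkeeping that keeps the filling property, the subsurface projection estimates, and the non-shortcut property simultaneously under control as $d$ grows.
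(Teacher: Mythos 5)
First, a point of comparison: the paper does not actually prove this statement. Theorem \ref{thm:IJK} is imported verbatim from Ido--Jang--Kobayashi \cite{IJK} and is used as a black box (e.g.\ in Proposition 2.3 and Lemma 5.1), so there is no in-paper proof to measure yours against. That said, your overall strategy --- a filling construction for the base case $d=2$, then an inductive one-vertex extension controlled by subsurface projections and the bounded geodesic image theorem --- is in the right spirit for results of this type, and your base case is essentially sound (it is the same "filling pair inside the complement of a single curve" construction the paper itself uses in \S 4, modulo checking that $S\setminus N(\gamma)$ has a non-simple component, which holds for non-sporadic $S$).

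The inductive step, however, contains a concrete gap in the way you propose to rule out geodesics that diverge early and rejoin. You want the projections of $v_{d+1}$ to the subsurfaces associated with \emph{all} earlier vertices $v_i$ to be large, and you propose to achieve this by taking a high power of a single $\phi$ supported on a subsurface facing the tail; but a power of a map supported on (a component of) $S\setminus v_d$ does not move the projection of $v_{d+1}$ to $S\setminus v_i$ for $i<d$, so this cannot be arranged as described. Fortunately it is also unnecessary: if $d_{S\setminus v_d}(\alpha, v_{d+1})$ exceeds the bounded-geodesic-image constant, then every geodesic from $\alpha$ to $v_{d+1}$ must contain a vertex disjoint from the supporting subsurface, and once you arrange that this vertex can only be $v_d$ itself, $v_d$ is forced to be the penultimate vertex of \emph{every} geodesic; this simultaneously yields $d_{\mathcal C(S)}(\alpha, v_{d+1})=d+1$ and reduces global uniqueness to the inductive hypothesis applied to the pair $(\alpha, v_d)$, so that only the last subsurface ever needs to be controlled and the "bookkeeping as $d$ grows" you worry about disappears. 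The one place genuine care is then needed is precisely the step you gloss over: a vertex disjoint from a component $Y$ of $S\setminus v_d$ could a priori be a curve in \emph{another} component of $S\setminus v_d$ rather than $v_d$ itself, so you must either take $v_d$ nonseparating or control the other components as well. As written, the proposal neither carries out this reduction nor supplies a workable substitute, so the inductive step is not yet a proof.
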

\begin{thm}[Oie \cite{O}]
\label{thm:O}
	\begin{enumerate}
		\item If $(g, n)=(1, n)$, $n\geq 3$ or $(g, n)$ with $g\ge 2, n\geq 1$, then $\mathfrak{Sp}_2 (g, n)\ni 2$.
		\item If $(g, n)=(2, n)$, $n\ge 2$ or $g\geq 3$, then $\mathfrak{Sp}_2(g, n)\ni 3$.
	\end{enumerate}
\end{thm}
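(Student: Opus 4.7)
The starting point is to note that if $d_{\mathcal C(S)}(\alpha, \beta) = 2$, then a geodesic of length $2$ from $\alpha$ to $\beta$ is uniquely determined by its midpoint $\gamma \in \mathcal C(S)$, which must be disjoint from both $\alpha$ and $\beta$. Hence
\[
|\mathcal G_S(\alpha, \beta)| = |\{[\gamma] \in \mathcal C(S) : i(\alpha, \gamma) = i(\beta, \gamma) = 0\}|,
\]
i.e., the number of isotopy classes of essential simple closed curves on $S$ realizable disjointly from $\alpha \cup \beta$. The plan is therefore to construct, for each topological type $(g, n)$ in the statement, an explicit intersecting pair $(\alpha, \beta)$ whose complement supports exactly the required number of such isotopy classes.

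The general recipe is to pick $\alpha$ and $\beta$ so that a regular neighborhood $\Sigma = N(\alpha \cup \beta)$ fills a small prescribed subsurface of $S$, leaving the complement $S \setminus \Sigma$ as a controlled union of low-complexity pieces (annuli, pairs of pants, four-holed spheres, once-holed tori). Essential simple closed curves disjoint from $\alpha \cup \beta$ then come either from essential curves in these pieces or from components of $\partial \Sigma$ that remain essential in $S$. For part (1), one tailors the complement so that exactly two such isotopy classes arise; this is done by choosing $\Sigma$ to absorb just enough of the topology of $S$ that $S \setminus \Sigma$ has a single essential interior class beyond its boundary. For example, for $(g, n)$ with $g \geq 2, n \geq 1$, one can take $\alpha, \beta$ to intersect minimally inside a once-holed torus subsurface of $S$, and for $(1, n)$ with $n \geq 3$ one uses a puncture-separating configuration to play the role of the missing handle. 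For part (2), one enlarges the complement in a controlled way, for instance by absorbing one extra puncture or handle into $S \setminus \Sigma$, so that a third essential isotopy class appears.

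Once such a pair is constructed, the proof amounts to (a) verifying that $d_{\mathcal C(S)}(\alpha, \beta) = 2$, which follows from $i(\alpha, \beta) > 0$ together with the existence of an explicitly exhibited common disjoint curve, and (b) checking that the number of essential simple closed curves on $S$ disjoint from $\alpha \cup \beta$, up to isotopy in $S$, is exactly $2$ (resp.\ $3$). Step (a) is routine. The main difficulty lies in step (b), and specifically in the sharp upper bound: one has to enumerate all isotopy classes of embedded simple closed curves in $S \setminus (\alpha \cup \beta)$, discard those inessential or peripheral in $S$, and rule out coincidences between classes that are distinct in the complement but become isotopic across $\alpha \cup \beta$ in $S$. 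This is carried out by a case analysis on the pieces of $S \setminus (\alpha \cup \beta)$, using the fact that annuli, pants, four-holed spheres, and once-holed tori each carry a very restricted set of essential simple closed curves, together with the criterion that a simple closed curve on $S$ is essential exactly when neither of the two sides into which it cuts $S$ is a disk or a once-punctured disk.
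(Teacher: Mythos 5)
First, a remark on the comparison itself: the paper does not prove this theorem — it quotes it from Oie \cite{O}. However, the paper's own constructions (the subsurface decomposition of \S 4, and the explicit figures for $(2,0)$, $(2,1)$ propagated by Operations I--IV in \S\S 5--7) reprove exactly these instances, always by one mechanism: choose $\alpha,\beta$ filling a subsurface $X$ all of whose complementary pieces are \emph{simple} in the paper's sense (pairs of pants, annuli, once- or twice-punctured disks), so that the only essential curves disjoint from $\alpha\cup\beta$ are the boundary curves of $X$, of which one arranges exactly $2$ or $3$ up to isotopy in $S$.

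Your opening reduction — that length-$2$ geodesics from $\alpha$ to $\beta$ correspond bijectively to isotopy classes of essential simple closed curves disjoint from $\alpha\cup\beta$ — is correct and is the right starting point. The gap is in your ``general recipe.'' You allow the complementary pieces of $N(\alpha\cup\beta)$ to include four-holed spheres and once-holed tori, and you aim to leave ``a single essential interior class beyond its boundary.'' Neither is compatible with a finite count: any essential subsurface containing even one non-peripheral essential simple closed curve has complexity $3g'-3+b'+n'\geq 1$ and therefore contains infinitely many pairwise non-isotopic such curves (Dehn-twist images of one another), and these remain essential and pairwise non-isotopic in $S$. So if any piece of $S\setminus N(\alpha\cup\beta)$ is a four-holed sphere or a once-holed torus, then $|\mathcal G_S(\alpha,\beta)|=\infty$; in particular there is no configuration realizing ``exactly one interior class.'' Your concrete suggestion for $g\geq 2$, $n\geq 1$ — take $\alpha,\beta$ intersecting minimally inside a once-holed torus subsurface — fails for precisely this reason: the complement then has genus $g-1\geq 1$ and carries infinitely many essential curves, every one of which is the midpoint of a length-$2$ geodesic. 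The workable version of your plan, and the one actually used, is to force \emph{every} complementary piece to contain no essential non-peripheral curve at all, so that step (b) reduces to counting which boundary curves of $N(\alpha\cup\beta)$ are essential and pairwise non-isotopic in $S$; the hypotheses on $(g,n)$ in the statement are exactly what make a configuration with $2$ (resp.\ $3$) such boundary classes possible.
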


Viewing those results, we are interested in the \lq\lq size\rq\rq of $\mathfrak{Sp}_d(g, n)$.
First, we observe a relationship between $\mathfrak{Sp}_d$ and $\mathfrak{Sp}_d^T$.

\begin{Mythm}
\label{Thm}
	For any $d\geq 2$, $\mathfrak{Sp}_d(g, n)\subset\mathfrak{Sp}_d^T(g, n)$.
\end{Mythm}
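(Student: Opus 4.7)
The plan is to prove the stronger statement that whenever the set $\mathcal G_S(\alpha, \beta)$ is finite, every geodesic in it is already tight. Granting this, any pair $(\alpha, \beta)$ witnessing a value $k \in \mathfrak{Sp}_d(g, n)$ automatically satisfies $|\mathcal G_S^T(\alpha, \beta)| = |\mathcal G_S(\alpha, \beta)| = k$, so $k \in \mathfrak{Sp}_d^T(g, n)$, which yields the desired inclusion.

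For the key statement I would argue by contrapositive. Suppose $\Gamma = (v_0, v_1, \ldots, v_d)$ is a geodesic from $\alpha$ to $\beta$ that is not tight, so at some internal index $i$ the curve $v_i$ fails to be isotopic to a boundary component of a regular neighborhood $N(v_{i-1} \cup v_{i+1})$. Since $v_i$ is disjoint from both $v_{i-1}$ and $v_{i+1}$, it lies as an essential, non-peripheral simple closed curve in some complementary component $R$ of $S \setminus N(v_{i-1} \cup v_{i+1})$.

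Next I would rule out the low-complexity possibilities for $R$: if $R$ were a pair of pants it would admit no essential simple closed curve in its interior, and if $R$ were an annulus its only essential curve would be boundary-parallel, contradicting the non-peripherality of $v_i$ in $R$. Hence $R$ has positive complexity and therefore carries infinitely many isotopy classes of non-peripheral essential simple closed curves. For each such representative $w$ the sequence $(v_0, \ldots, v_{i-1}, w, v_{i+1}, \ldots, v_d)$ is again an element of $\mathcal G_S(\alpha, \beta)$, because $w$ is disjoint from $v_{i-1}$ and $v_{i+1}$ and the length is unchanged.

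The main technical obstacle is verifying that non-isotopic non-peripheral curves in $R$ remain non-isotopic once viewed in $S$, so that the infinite family in $R$ genuinely produces infinitely many distinct geodesics. This rests on the $\pi_1$-injectivity of the inclusion $R \hookrightarrow S$ together with a case analysis according to whether $v_{i-1}$ or $v_{i+1}$ is separating, ruling out any identification of non-peripheral classes through the cuts. Once this is settled, the infinitely many choices of $w$ contradict the hypothesized finiteness of $\mathcal G_S(\alpha, \beta)$, forcing every geodesic between $\alpha$ and $\beta$ to be tight and thereby completing the proof.
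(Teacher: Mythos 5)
Your proof is correct and shares the paper's overall architecture: both arguments reduce the theorem to the key lemma that finiteness of $\mathcal G_S(\alpha,\beta)$ forces every geodesic in it to be tight (the reverse implication being Masur--Minsky's finiteness theorem for tight geodesics), whence $\mathcal G_S(\alpha,\beta)=\mathcal G_S^T(\alpha,\beta)$ for any pair realizing a value of $\mathfrak{Sp}_d$. Where you diverge is in how infinitely many geodesics are produced from a non-tight one. The paper stays with its Definition~\ref{Def:Tight}: a failure of tightness at $a_i$ yields a curve $b$ meeting $a_i$ and disjoint from $a_{i-1}\cup a_{i+1}$, and the Dehn-twist images $T_b^n(a_i)$ supply infinitely many middle vertices whose pairwise distinctness is immediate from intersection numbers with $b$. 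You instead locate $v_i$ as a non-peripheral curve in a positive-complexity component $R$ of the complement of $N(v_{i-1}\cup v_{i+1})$ and let the middle vertex range over all non-peripheral essential curves of $R$. This is a valid variant (your family contains the twist family), but it carries two obligations the twist argument avoids: you must justify that your ``boundary of a regular neighborhood'' formulation of tightness is equivalent to the paper's intersection condition (it is, because a curve disjoint from $v_{i-1}\cup v_{i+1}$ is either peripheral in its complementary component or is crossed by another curve inside that component, but this deserves a sentence); and you must prove that non-isotopic non-peripheral curves of the essential subsurface $R$ stay non-isotopic in $S$, which you rightly flag as the crux --- the standard route is the change-of-coordinates principle or an intersection-number computation rather than the separating/non-separating case analysis you sketch. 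Finally, your modified paths are honest geodesics only after noting that $w$ cannot be isotopic to $v_{i-1}$ or $v_{i+1}$, which holds because those two curves intersect while $w$ is disjoint from both.
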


When $d=2$, we have completely determined the spectrums for geodesics and tight geodesics. 
\begin{Mythm}
\label{ThmI}
 $\mathfrak{Sp}_2(g, n)=\mathfrak{Sp}_2^T(g, n)$.
\begin{enumerate}
	\item If $3g+n\geq 7$,
	\begin{equation}
		\mathfrak{Sp}_2(g, n)=\mathfrak{Sp}_2^T(g, n)=\left \{1, 2, \dots , \left \lfloor \frac{3g+n}{2}\right \rfloor\right \}.
	\end{equation}
	\item $\mathfrak{Sp}_2(0, 5)=\mathfrak{Sp}_2^T(0, 5)=\{1\}$, $\mathfrak{Sp}_2(0, 6)=\mathfrak{Sp}_2^T(0, 6)=\{1, 2\}$.
	\item $\mathfrak{Sp}_2(1, 2)=\mathfrak{Sp}_2^T(1, 2)=\{1\}$, $\mathfrak{Sp}_2(1, 3)=\mathfrak{Sp}_2^T(1, 3)=\{1, 2\}$.
	\item $\mathfrak{SP}_2(2, 0)=\mathfrak{Sp}_2^T(2, 0)=\{1, 2\}$.
\end{enumerate}

\end{Mythm}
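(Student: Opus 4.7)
The plan is to combine a structural analysis of distance-$2$ pairs with an explicit realization construction. Theorem~\ref{Thm} already supplies $\mathfrak{Sp}_2(g,n)\subset\mathfrak{Sp}_2^T(g,n)$, so two things remain: (a) the sharp upper bound asserted in each case, and (b) a realization of every integer in the claimed range as $|\mathcal{G}_S(\alpha,\beta)|$ for some pair $(\alpha,\beta)$. Part (b) simultaneously yields the reverse inclusion $\mathfrak{Sp}_2^T(g,n)\subset\mathfrak{Sp}_2(g,n)$ (since for the pairs produced, the geodesic and tight-geodesic counts will coincide) and hence the asserted equality.

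First I would prove a structural lemma. Put $\alpha,\beta$ in minimal position; since $d_{\mathcal{C}(S)}(\alpha,\beta)=2$ we have $\alpha\cap\beta\neq\emptyset$. Let $F=F(\alpha,\beta)\subset S$ be the essential subsurface filled by $\alpha\cup\beta$. Any middle vertex of a length-$2$ geodesic is an essential simple closed curve disjoint from $\alpha\cup\beta$, so it is either (i) isotopic to a component of $\partial F$ that is essential in $S$, or (ii) an essential curve supported in some component of $S\setminus F$. The tightness condition at $d=2$ selects precisely case (i), and each tight geodesic is determined by such a $\partial F$-component, up to identifications induced by annular components of $S\setminus F$. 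Finiteness of $|\mathcal{G}_S(\alpha,\beta)|$ rules out any component of $S\setminus F$ that carries infinitely many curve classes (in particular any component of complexity $\geq 2$), forcing the complement to consist only of disks, once-punctured disks, annuli, and pairs of pants. In this regime $\mathcal{G}_S(\alpha,\beta)=\mathcal{G}_S^T(\alpha,\beta)$.

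For the upper bound in case (1), I would combine $\chi(S)=\chi(F)+\chi(S\setminus F)$ with the fact that $F$ admits a filling pair, which forces the complexity inequality $3g_F+n_F+b_F-3\geq 1$. Since $S\setminus F$ is restricted as above, the Euler characteristic identity reduces to a linear relation among $g_F$, $n_F$, $b_F$, and the numbers of annular and pants components. Each polygonal face of $\alpha\cup\beta$ inside $F$ is a $2m$-gon with $m\geq 2$ (no-bigon property in minimal position), which gives a parity/counting inequality between the intersection number $|\alpha\cap\beta|$ and the face count. Together these collapse to $k\leq\lfloor(3g+n)/2\rfloor$, where $k$ is the number of distinct essential middle vertices. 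For realization, for each $1\leq k\leq\lfloor(3g+n)/2\rfloor$ I would chain $k$ small building blocks (once-holed tori or four-holed spheres, each carrying a pair of filling curves whose complement is all squares) along boundary circles, absorbing the remaining topology of $S$ into disks and once-punctured disks. By construction $F$ has exactly $k$ boundary components essential in $S$, the complement of $\alpha\cup\beta$ in $F$ is all squares, and the complement of $F$ in $S$ has no annuli; so $|\mathcal{G}_S(\alpha,\beta)|=|\mathcal{G}_S^T(\alpha,\beta)|=k$.

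Cases (2)--(4) are handled by direct enumeration. On each listed small surface, the structural lemma reduces the problem to listing, up to the mapping class group, the possible fill subsurfaces $F$ whose complement is of the restricted type; low complexity makes this list finite and easily tabulated. The main obstacle is the sharp upper bound in case (1): one must carefully track how punctures of $S$ distribute among polygonal faces of $F$ and complementary disks, how annular and pants components of $S\setminus F$ identify or contribute boundary components of $F$, and how the parity of the number of polygonal faces constrains $b_F$. The realization side is more mechanical but requires uniform building-block constructions that cover the full range $1,\dots,\lfloor(3g+n)/2\rfloor$ for all $(g,n)$ satisfying $3g+n\geq 7$.
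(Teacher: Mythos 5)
Your upper-bound half is essentially a sound reorganization of the paper's argument, and in one respect cleaner: by working with the subsurface $F$ filled by $\alpha\cup\beta$ and observing that tight length-$2$ geodesics correspond exactly to essential boundary classes of $F$ while non-peripheral curves in complementary components give non-tight geodesics (and finiteness of $\mathcal G_S(\alpha,\beta)$ forces every complementary component to be simple), you can bound an \emph{arbitrary} pair directly, whereas the paper only bounds the maximum and needs a maximality trick to show its component $X$ is planar with at most one puncture. The inequality you need is not really the bigon/square parity count you invoke, but the per-component estimate $3g_U+3m_U+n_U-3\geq 2\cdot(\text{number of essential boundary classes contributed by }U)$ for each complementary piece $U$, summed against $3g+n=3g_F+n_F+\sum_U(3g_U+3m_U+n_U-3)$; your phrase ``together these collapse to $k\leq\lfloor(3g+n)/2\rfloor$'' papers over exactly this step, and the cutoff should be complexity $\geq 1$, not $\geq 2$. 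Note also that $\lfloor(3g+n)/2\rfloor$ is \emph{not} the sharp bound in cases (2)--(4) (it gives $2$ for $(0,5)$ and $(1,2)$, where the answer is $1$), so your ``direct enumeration'' there must genuinely use the non-simplicity of $F$ to exclude $k=2$; the paper instead gets these cases immediately from the cruder multicurve bound $3g-4+n$ of its Proposition 4.1.

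The genuine gap is in the realization half. Your plan to ``absorb the remaining topology of $S$ into disks and once-punctured disks'' cannot work for $k<\lfloor(3g+n)/2\rfloor$: every complementary component must be simple (otherwise the geodesic count is infinite), and simple surfaces have genus $0$ and at most two punctures, so any excess genus must be carried by $F$ itself, not by the complement. Likewise, ``chaining $k$ once-holed tori or four-holed spheres along boundary circles'' does not visibly produce a single connected filling pair $(\alpha,\beta)$ on a subsurface with exactly $k$ essential boundary classes -- you never say how the $k$ local filling pairs are merged into one pair, and the boundary count of a chain of four-holed spheres is $2k+2$, not $k$. This is precisely where the paper spends its effort: it builds the maximal configuration by decomposing $S$ into a planar piece plus pairs of pants, twice-punctured disks and punctured annuli according to the parity of $(g,n)$, obtains intermediate values by taking a sub-collection of the $a_i$ and refilling the resulting larger component, and (in \S5--\S7) develops explicit surgery operations with base-case figures. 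Without a concrete construction covering every $k$ in $\{1,\dots,\lfloor(3g+n)/2\rfloor\}$, your argument establishes only the inclusion $\mathfrak{Sp}_2^T(g,n)\subset\{1,\dots,\lfloor(3g+n)/2\rfloor\}$, not the equality.
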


It might be well-known that if two curve complexes are isometric as simplicial complexes, then the associated surfaces are homeomorphic to each other.
It is shown as an application of a result in \cite{Ivanov}, for example. 
Here, we  may show the following isospectrum analogue for the curve complex of closed surfaces, which is an immediate consequence of Theorem \ref{ThmI}.

\begin{CorN}
	Let $S, S'$ be closed oriented surfaces whose genera are more than one.
	Then, the followings are equivalent.
	\begin{enumerate}
		\item $S$ and $S'$ are homeomorphic to each other;
		\item $\mathfrak{Sp}_2(S)=\mathfrak{Sp}_2(S')$;
		\item $\mathfrak{Sp}_2^T(S)=\mathfrak{Sp}_2^T(S')$;
		\item $\mathcal C(S)$ and $\mathcal C(S')$ are isometric as simplicial complexes.
	\end{enumerate}
\end{CorN}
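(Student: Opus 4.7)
The plan is to deduce the equivalences from Theorem \ref{ThmI} by checking that the $d=2$ spectrum distinguishes closed orientable surfaces of genus $g\geq 2$ up to homeomorphism.

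The easy implications require no real work: $(1)\Rightarrow (4)$ because any homeomorphism of surfaces induces a simplicial isometry of the curve complexes, and $(4)\Rightarrow (2)$ together with $(4)\Rightarrow (3)$ because both $\mathfrak{Sp}_2$ and $\mathfrak{Sp}_2^T$ are intrinsic invariants of $\mathcal C(S)$ as a simplicial complex with its path metric (tightness is defined purely in terms of the simplicial structure). The implication $(4)\Rightarrow (1)$ is the Ivanov-type result quoted in the paragraph preceding the corollary; alternatively it follows from the chain $(4)\Rightarrow (2)\Rightarrow (1)$ once we establish the latter.

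The substantive step is $(2)\Rightarrow (1)$, and $(3)\Rightarrow (1)$ is identical. Setting $n=0$ in Theorem \ref{ThmI}: for $g\geq 3$ we have $3g\geq 9\geq 7$, so case (1) applies and yields
\[
\mathfrak{Sp}_2(g,0)=\mathfrak{Sp}_2^T(g,0)=\left\{1,2,\dots,\left\lfloor\tfrac{3g}{2}\right\rfloor\right\},
\]
while for $g=2$ case (4) gives $\mathfrak{Sp}_2(2,0)=\mathfrak{Sp}_2^T(2,0)=\{1,2\}$. I would then verify the elementary arithmetic fact that $g\mapsto\lfloor 3g/2\rfloor$ is strictly increasing on $g\geq 3$ (it takes values $4,6,7,9,10,12,\dots$), so the sets $\{1,\dots,\lfloor 3g/2\rfloor\}$ for $g\geq 3$ are pairwise distinct and none of them equals $\{1,2\}$. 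Therefore the spectrum determines $g$, and since closed orientable surfaces are classified up to homeomorphism by their genus, $S$ and $S'$ are homeomorphic.

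There is no genuine obstacle once Theorem \ref{ThmI} is in hand; the only minor care needed is to handle $g=2$ separately, since $(g,n)=(2,0)$ lies outside the range $3g+n\geq 7$ of the general formula, but the exceptional spectrum $\{1,2\}$ it produces is manifestly not of the form $\{1,\dots,\lfloor 3g/2\rfloor\}$ for any $g\geq 3$, so no ambiguity arises.
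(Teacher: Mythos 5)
Your proposal is correct and matches the paper's intent: the paper gives no explicit proof, stating only that the corollary is an immediate consequence of Theorem \ref{ThmI}, and your argument --- that for $n=0$ the spectrum is $\{1,2\}$ when $g=2$ and $\{1,\dots,\lfloor 3g/2\rfloor\}$ when $g\geq 3$, with $\lfloor 3g/2\rfloor$ strictly increasing, so the spectrum determines the genus --- is exactly the computation being left to the reader. The handling of the easy implications, including the observation that tightness as defined in this paper is intrinsic to the simplicial structure, is also sound.
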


The proof of Theorem \ref{Thm} is given in \S 3 and the proof of the statement (1) of Theorem \ref{ThmI} is given in \S 4.
After explaining our operations for the curve complex in \S 5, we show the statements (2), (3) and (4) in \S 6.
We also confirm that $\mathfrak{Sp}_2(g, n)=\mathfrak{Sp}_2^T(g, n)$ at the end of \S 6. 

At \S 7, we will give an alternative proof of (1) of Theorem \ref{ThmI} which is done by the constructive method given in \S 4.

\medskip

	{\bf Acknowledgement:} The authors thank Prof. Hidetoshi Masai for his valuable comments and suggestions.

\section{Preliminaries}
\subsection{Simple surfaces and sporadic surfaces}
\begin{Def}
	An oriented surface $S$ of type $(g, n)$ is called \emph{simple} if it does not contain essential simple closed curves.
We say that $S$ is \emph{sporadic} if it does not contain disjoint essential simple closed curves.
A surface homeomorphic to a simple (resp. sporadic) surface is also said to be simple (resp. sporadic).
\end{Def}
It is easy to determine the types of simple surfaces and sporadic surfaces.
\begin{lemma}
\label{lemma:simple-sporadic}
	An oriented surface $S$ of type $(g, n)$ is simple if and only if it is homeomorphic to a surface of type $(0, n)$ for $0\leq n\leq 3$, and
$S$ is sporadic if and only if it is homeomorphic to a surface of type $(0, n)$ for $0\leq n\leq 4$, or of type $(1, n)$ for $n=0, 1$.
\end{lemma}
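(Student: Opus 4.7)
The plan is to handle each of the two claims by splitting into an inclusion and its reverse, and within each direction argue by an explicit case analysis on $(g,n)$. For the listed surfaces I must show every simple closed curve is inessential (simple case) or every two essential curves meet (sporadic case); for the remaining surfaces I must exhibit an essential curve, respectively a pair of disjoint essential curves.

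For the \emph{simple} claim, recall that a simple closed curve on the sphere always bounds disks on both sides. If $S$ is of type $(0,n)$ with $n\leq 3$, then any simple closed curve separates the punctures into two groups and the smaller group has at most one puncture, so the curve bounds either a disk or a once-punctured disk and is therefore inessential. Conversely, if $(g,n)$ is not in the list, either $g\geq 1$ or $g=0$ and $n\geq 4$; in the first case a non-separating meridian is essential, and in the second case a curve separating two punctures from the remaining $n-2\geq 2$ punctures is essential.

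For the \emph{sporadic} claim, the surfaces of type $(0,n)$ with $n\leq 3$ are already simple. For the remaining listed surfaces the argument is as follows. On the four-punctured sphere $(0,4)$, every essential simple closed curve separates the punctures into two pairs, and two essential curves that split the four punctures into different pairings must intersect, while two curves realizing the same pairing are homotopic. On the torus $(1,0)$, essential simple closed curves correspond to primitive classes in $H_1(S;\mathbb{Z})\cong\mathbb{Z}^2$ up to sign, and the geometric intersection number of two such curves is the absolute value of the determinant of the corresponding matrix; two disjoint non-homotopic essential curves would give intersection number zero and hence parallel, homotopic curves, a contradiction. For $(1,1)$ the same argument applies after capping the puncture, or alternatively by cutting along one essential curve and observing that the resulting pair of pants has no further essential curves in its interior.

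For the converse direction of the sporadic claim, I construct two disjoint essential curves on each remaining surface: on $(0,n)$ with $n\geq 5$, one curve separating $\{p_{1},p_{2}\}$ from the rest and a second, disjoint curve separating $\{p_{3},p_{4}\}$ from $\{p_{5},\dots,p_{n}\}$; on $(1,n)$ with $n\geq 2$, a non-separating meridian together with a curve enclosing two of the punctures; on $(g,n)$ with $g\geq 2$ and any $n$, two meridians of distinct handles. The step I expect to require the most care is the sporadic direction for the four-punctured sphere and the (possibly punctured) torus, since there one must rule out every possible pair of disjoint essential curves rather than exhibit a single example; the intersection-number computation on the torus and the combinatorics of pairings on $(0,4)$ are the essential inputs.
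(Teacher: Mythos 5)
The paper offers no argument for this lemma at all --- it is stated as an easy, standard fact --- so there is nothing to compare against except the folklore proof, which is essentially what you give: inclusion plus converse, with an explicit curve (or disjoint pair) exhibited on every non-listed surface and an exhaustive check on the listed ones. Your converse constructions and your treatment of $(0,n)$ for $n\leq 3$, of the torus via slopes, and of $(1,1)$ via cutting to a pair of pants are all correct.

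There is, however, one genuine gap: in the sporadic case for $(0,4)$ you assert that ``two curves realizing the same pairing are homotopic.'' This is false. Essential simple closed curves on the four-punctured sphere are parametrized by slopes in $\mathbb{Q}\cup\{\infty\}$, and the pairing of the punctures realized by a slope $p/q$ depends only on $(p,q)$ modulo $2$; hence each of the three pairings is realized by infinitely many pairwise non-homotopic (and pairwise intersecting) curves --- e.g.\ slopes $0/1$ and $2/1$ give the same pairing but have geometric intersection number $4$. Since your dichotomy (different pairing $\Rightarrow$ intersect; same pairing $\Rightarrow$ homotopic) fails in its second branch, you have not ruled out a disjoint non-homotopic pair with the same pairing. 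The repair is the same cut-and-see argument you already use for $(1,1)$, and it makes the pairing combinatorics unnecessary: if $a$ and $b$ are disjoint essential curves on $S_{0,4}$, cutting along $a$ yields two components, each a sphere with one boundary curve and two punctures, i.e.\ a pair of pants; every simple closed curve in a pair of pants is trivial or boundary-parallel, so $b$ is homotopic to $a$. With that substitution the proof is complete and correct.
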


If a surface $S$ is sporadic, then the curve complex $\mathcal C(S)$ is trivial.
Hence, in this paper, we always assume that $S$ is not sporadic.

\subsection{Tight geodesics}

\begin{Def}
\label{Def:Tight}
Let $L(\alpha, \beta)=[\alpha, a_1, \dots , a_{d-1}, \beta]\subset \mathcal C(S)$ be a geodesic of length $d\geq 2$ connecting $\alpha$ and $\beta$.
Then, $L(\alpha, \beta)$ is called a \emph{tight geodesic} if for any $i\in \{1, \dots , d-1\}$, $b\in \mathcal C(S)$ with $a_i\cap b\not=\emptyset$ intersects $a_{i-1}\cup a_{i+1}$, where $a_0=\alpha$ and $a_d=\beta$.
\end{Def}
The existence of tight geodesics between any two points in $\mathcal C(S)$ is proved by Masur-Minsky \cite{MM2}.
Moreover, they show the following finiteness result (see also Bowdich \cite{Bowditch} for more stronger results).
\begin{Pro}[\cite{MM2} Corollary 6.14]
\label{Pro:TightFinite}
	For any $\alpha, \beta\in \mathcal C(S)$, there are only finitely many tight geodesics connecting them.
\end{Pro}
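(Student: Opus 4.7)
The plan is to follow the strategy pioneered by Masur and Minsky. The central observation is that although ordinary geodesics in $\mathcal C(S)$ can branch infinitely at each step (every vertex has infinitely many neighbors), the tightness condition severely restricts the interior vertices of a geodesic.

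First I would establish a local finiteness statement: given vertices $a_{i-1},a_{i+1}$ with $d_{\mathcal C(S)}(a_{i-1},a_{i+1})\le 2$, only finitely many $a_i$ can fit between them along a tight geodesic. The proof is to consider the essential subsurface $F\subset S$ filled by $a_{i-1}\cup a_{i+1}$. Every essential simple closed curve in the interior of $S\setminus F$ is disjoint from $a_{i-1}\cup a_{i+1}$, so the contrapositive of Definition \ref{Def:Tight} (any $b$ disjoint from $a_{i-1}\cup a_{i+1}$ is disjoint from $a_i$) forces $a_i$ to be disjoint from every such curve. This pins $a_i$ to be isotopic to a boundary component of $F$, or to live in a very restricted configuration inside $S\setminus F$; in either case only finitely many possibilities occur for a given pair $(a_{i-1},a_{i+1})$.

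Next I would propagate this local bound to a global bound on tight geodesics from $\alpha$ to $\beta$. The tool is the hyperbolicity of $\mathcal C(S)$ from \cite{MM} combined with subsurface projections. The bounded geodesic image theorem states that whenever a geodesic from $\alpha$ to $\beta$ enters a controlled neighborhood of $\partial Y$ for a proper essential subsurface $Y\subset S$, the projections $\pi_Y(\alpha)$ and $\pi_Y(\beta)$ lie within uniformly bounded distance in $\mathcal C(Y)$. This constrains the locations where curves adjacent to a given subsurface boundary can appear as interior vertices in any tight geodesic from $\alpha$ to $\beta$. Combined with the local finiteness above and with the fact that $d=d_{\mathcal C(S)}(\alpha,\beta)$ is fixed, this yields a finite upper bound on $|\mathcal G_S^T(\alpha,\beta)|$.

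The main obstacle is precisely the propagation step. The local bound on $a_i$ depends on knowing both $a_{i-1}$ and $a_{i+1}$, but in building a geodesic one does not know the interior vertices simultaneously, so a naive induction from either endpoint does not close: $a_1$ alone can be any of infinitely many curves disjoint from $\alpha$. Overcoming this requires the full interplay between the hyperbolic geometry of $\mathcal C(S)$ and the geometry of subsurface projections, which is the technical core of the Masur--Minsky argument in \cite{MM2} and of Bowditch's sharper bound in \cite{Bowditch}; I would invoke their machinery rather than reprove it from scratch.
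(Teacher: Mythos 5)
The paper offers no proof of this proposition: it is quoted directly from Masur--Minsky \cite{MM2} (Corollary 6.14), and your proposal, which ends by invoking the Masur--Minsky and Bowditch machinery rather than reproving it, therefore amounts to the same citation. Your preliminary observations are sound --- in particular, the paper's tightness condition does pin each interior vertex $a_i$ to a boundary component of the subsurface filled by $a_{i-1}\cup a_{i+1}$, which is exactly the local finiteness the paper exploits later in Proposition \ref{3g-4+n} --- but note that you have stated the bounded geodesic image theorem backwards (the bounded projection to $\mathcal C(Y)$ is the conclusion when the geodesic \emph{avoids} the $1$-neighborhood of $\partial Y$, i.e.\ when every vertex cuts $Y$, not when it enters that neighborhood), which would matter if you attempted to carry out the propagation step yourself rather than citing \cite{MM2} or \cite{Bowditch}.
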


\subsection{Filling curves}
\begin{Def}
	A family $\{a_1, \dots , a_k\}$ of simple closed curves on $S$ is called  a \emph{filling set} if any connected component of $S\setminus \cup_{i=1}^{k}a_i$ is simply connected or homeomorphic to a punctured disk.
\end{Def}

As we stated in the introduction, the curve complex $\mathcal C(S)$ is a geodesic metric space. Namely, for any distinct points $\alpha, \beta \in \mathcal C(S)$, there exists a path $\ell$ in $\mathcal C(S)$ connecting the two points with $\|\ell \|=d_{\mathcal C(S)}(\alpha, \beta)$, where $\|\ell\|$ is the length of $\ell$.

We may characterize a pair of essential curves to be a filling set in terms of the distance of the curve complex.

\begin{Pro}
\label{Pro:filling}
	Let $\{\alpha, \beta\}$ be a pair of essential simple closed curves on $S$.
	Suppose that $\alpha$ is not homotopic to $\beta$.
	Then, $\{\alpha, \beta\}$ is a filling set if and only if $d_{\mathcal C(S)}(\alpha, \beta)\geq 3$.
\end{Pro}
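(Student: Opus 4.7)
The plan is to establish both directions via a common intermediate characterization: $d_{\mathcal{C}(S)}(\alpha,\beta) \leq 2$ if and only if some essential simple closed curve on $S$ is disjoint from $\alpha \cup \beta$. Granting this, the proposition reduces to the topological fact that such a disjoint essential curve exists if and only if $\{\alpha, \beta\}$ fails to fill $S$. The intermediate characterization is easy: if $d(\alpha,\beta)=1$, a collar neighborhood of $\alpha$ disjoint from $\beta$ gives a boundary circle parallel to $\alpha$ that is essential and disjoint from $\alpha \cup \beta$; if $d(\alpha,\beta)=2$, the middle vertex of a length-two geodesic works; conversely, any essential $\gamma$ disjoint from $\alpha \cup \beta$ satisfies $d(\alpha,\gamma), d(\gamma,\beta) \leq 1$, so $d(\alpha,\beta) \leq 2$ by the triangle inequality.

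For the topological fact, one direction is immediate: if $\gamma$ is essential and disjoint from $\alpha \cup \beta$, the complementary component containing $\gamma$ cannot be a disk (else $\gamma$ is null-homotopic) nor a once-punctured disk (else $\gamma$ is peripheral), so $\{\alpha,\beta\}$ does not fill $S$. For the converse, start from a complementary component $C$ whose closure $\bar C$ (obtained by cutting $S$ along $\alpha \cup \beta$) is neither a disk nor a once-punctured disk. Viewing $\bar C$ as a compact surface of genus $g_{\bar C}$ with $b_{\bar C} \geq 1$ boundary circles and $p_{\bar C}$ interior punctures of $S$, produce $\gamma \subset C$ by case analysis: a non-separating simple closed curve in $\bar C$ when $g_{\bar C} \geq 1$; a curve enclosing exactly two of the interior punctures (so $\gamma$ bounds a twice-punctured disk in $S$ and is automatically essential) when $g_{\bar C} = 0$ and $p_{\bar C} \geq 2$; and a curve parallel to a boundary circle of $\bar C$ when $g_{\bar C} = 0$, $p_{\bar C} \leq 1$, and $b_{\bar C} \geq 2$.

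The main obstacle is verifying essentiality of $\gamma$ in $S$ in the third case, where $\gamma$ is only guaranteed to be non-trivial in $\bar C$. Because the boundary loops of $\bar C$ are closed walks on the graph $\alpha \cup \beta$, possibly mixing arcs of $\alpha$ and $\beta$ when $\alpha \cap \beta \neq \emptyset$, one must rule out a disk or once-punctured disk in $S$ bounded by $\gamma$ that escapes $\bar C$. An innermost-disk argument on the intersection of such a hypothetical disk with $\partial \bar C$ reduces this to showing that no boundary loop of $\bar C$ bounds a disk or once-punctured disk in $S$; the assumption that $\alpha, \beta$ are in minimal position (no bigons), together with the essentiality of $\alpha$ and $\beta$, rules out such a null-homotopy or peripheral homotopy. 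With this verification in hand, the triangle inequality closes both directions of the proposition.
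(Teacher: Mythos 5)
Your argument follows the same route as the paper's own (very terse) proof: both directions reduce to the equivalence between $d_{\mathcal C(S)}(\alpha,\beta)\le 2$ and the existence of an essential simple closed curve disjoint from $\alpha\cup\beta$, and then to the topological dictionary between that and non-filling; you are simply supplying the case analysis and essentiality checks that the paper asserts without proof. The only remark worth adding is that your third case does not need the innermost-disk/bigon machinery: a disk or once-punctured disk $D$ bounded by your $\gamma$ has boundary disjoint from the connected curves $\alpha$ and $\beta$, so each of them lies entirely inside or entirely outside $D$, and since either side of $\gamma$ contains a boundary circle of $\bar C$ (as $b_{\bar C}\ge 2$), $D$ meets $\alpha\cup\beta$ and hence contains $\alpha$ or $\beta$, contradicting their essentiality.
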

\begin{proof}
	If $\{\alpha, \beta\}$ is not a filling set, then there exists an essential simple closed curve $\gamma$ in $S\setminus (\alpha\cup\beta)$ which is not homotopic to $\alpha$ nor $\beta$.
	Hence, $d_{\mathcal C(S)}(\alpha, \beta)\leq 2$.
	
	Conversely, if $\{ \alpha, \beta\}$ is a filling set, then there are no essential simple closed curves in $S\setminus (\alpha\cup\beta)$ which is not homotopic to $\alpha$ nor $\beta$.
	Hence, we verify that $d_{\mathcal C(S)}(\alpha, \beta)\geq 3$.
\end{proof}

We may guarantee the existence of filling pairs on non-simple surfaces.
\begin{Pro}
\label{Pro:ExistFilling}
	If $S$ is non-simple, then there exist $\alpha, \beta\in \mathcal C(S)$ such that $\{\alpha, \beta\}$ is a filling set.
\end{Pro}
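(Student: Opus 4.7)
The plan is to invoke Proposition \ref{Pro:filling} to reduce the claim to a purely topological construction: it suffices to exhibit \emph{any} pair $\{\alpha,\beta\}$ of essential simple closed curves on $S$ whose union fills $S$. Indeed, for such a pair Proposition \ref{Pro:filling} forces $d_{\mathcal C(S)}(\alpha,\beta)\geq 3$ (so in particular $\alpha$ and $\beta$ are non-homotopic, hence are distinct vertices of $\mathcal C(S)$), and $\{\alpha,\beta\}$ is the desired filling pair. Thus the entire content of the proposition is a topological existence statement independent of the curve complex.

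To construct a filling pair I would argue by cases on the topological type $(g,n)$. Non-simplicity leaves exactly the families $g=0,\,n\geq 4$;\ $g=1,\,n\geq 1$;\ and $g\geq 2,\,n\geq 0$, so a finite case analysis suffices. In each case I would draw $\alpha$ and $\beta$ transversely with a chosen number $k$ of intersection points and then verify by inspection that every component of $S\setminus(\alpha\cup\beta)$ is a disk or once-punctured disk. An Euler characteristic bookkeeping constrains the correct intersection number: if the complement decomposes into $D$ open disks and $P$ once-punctured disks, then $k = D + 2g + n - 2$, and the pair fills precisely when $P = n$ (each puncture lies in its own complementary once-punctured disk). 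For $g\geq 2,\,n=0$, the classical filling pair coming from the $4g$-gon presentation of $\Sigma_g$ serves as the base; positive-puncture cases follow by placing each puncture in its own complementary disk; for the torus families $(1,n)$ one can use a meridian and a longitude, distributing the $n$ punctures in the single complementary region; for the punctured-sphere families $(0,n)$ with $n\geq 4$ one draws two curves whose intersection pattern cuts the sphere into sufficiently many faces.

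The main obstacle I expect is the borderline low-complexity cases such as $(0,5)$ and $(1,2)$, where the Euler-characteristic budget is tight and one must verify face-by-face that the drawn pair actually fills (as opposed to leaving an annular or pants-type complement). A more uniform alternative would be the following Dehn-twist construction: pick any essential curve $\alpha$ and any essential curve $\gamma$ with $\alpha\cap\gamma\neq\emptyset$ (such a $\gamma$ exists on any non-simple surface), set $\beta=T_\alpha^N(\gamma)$ for large $N$ where $T_\alpha$ denotes the Dehn twist along $\alpha$, and argue that the spiraling of $\beta$ around $\alpha$ forces $\{\alpha,\beta\}$ to fill for all sufficiently large $N$. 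This is conceptually cleaner but requires its own justification of the spiraling-implies-filling step, so the explicit case-by-case construction is likely the more elementary route.
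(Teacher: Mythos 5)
Your overall strategy (exhibit an explicit filling pair by a case analysis on $(g,n)$) is a legitimate and genuinely different route from the paper's: the paper simply takes $\alpha,\beta$ with $d_{\mathcal C(S)}(\alpha,\beta)=3$, which exist by the quoted Theorem \ref{thm:IJK} of Ido--Jang--Kobayashi, and applies Proposition \ref{Pro:filling} to conclude that they fill. That argument is two lines and needs no pictures; yours would replace an external citation by elementary topology, which has independent value, but as written it is a plan rather than a proof --- the entire content of the proposition becomes the explicit constructions that you defer with ``I would draw''.

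More importantly, two of the steps you do commit to would fail. First, for $(1,n)$ a meridian and a longitude meet once, so by your own Euler-characteristic count the complement is a \emph{single} face; placing $n\geq 2$ punctures in it produces an $n$-punctured disk, which contains an essential curve surrounding two of the punctures, so the pair does not fill. Your own criterion $P=n$ already rules this out, and the same issue afflicts ``place each puncture in its own complementary disk'' for $g\geq 2$: a fixed base pair has a fixed number of complementary faces, so for large $n$ the intersection number must grow, and producing those higher-intersection pairs is exactly the construction being omitted. Second, the Dehn-twist fallback $\beta=T_\alpha^N(\gamma)$ cannot be repaired: any essential curve $\delta$ disjoint from $\alpha$ satisfies $i(T_\alpha^N(\gamma),\delta)=i(T_\alpha^N(\gamma),T_\alpha^N(\delta))=i(\gamma,\delta)$, so $\{\alpha,T_\alpha^N(\gamma)\}$ fills if and only if $\{\alpha,\gamma\}$ already fills; twisting along $\alpha$ gains nothing, no matter how large $N$ is. A soft argument of the flavour you want is the one the paper records in the Remark following the proposition, via Masur--Minsky's result that $d_{\mathcal C(S)}(\alpha,\varphi_n(\alpha))\to\infty$ for suitable mapping classes $\varphi_n$.
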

\begin{proof}
	From Theorem \ref{thm:IJK}, there exist $\alpha, \beta\in \mathcal C(S)$ such that $d_{\mathcal C(S)}(\alpha, \beta)=3$.
	It follows from Proposition \ref{Pro:filling} that $\{\alpha, \beta\}$ is a filling set.
\end{proof}

\begin{Rem}
	Masur-Minsky (cf. \cite{MM} Proposition 4.6) shows that for any $\alpha\in \mathcal C(S)$, there exists a sequence $\{\varphi_n\}_{n=1}^{\infty}$ in the mapping class of $S$ such that $d_{\mathcal C(S)}(\alpha, \varphi_n(\alpha))\to \infty$ as $n\to \infty$.
	Proposition \ref{Pro:ExistFilling} is also proved by this result.
\end{Rem}

\section{Proof of Theorem \ref{Thm}}
Theorem \ref{Thm} is directly reduced to the following lemma.
\begin{lemma}
	Let $\alpha, \beta\in \mathcal C(S)$ with $d_{\mathcal C(S)}(\alpha, \beta)=d\geq 2$.
	Then, the followings are equivalent:
	\begin{enumerate}
		\item $|\mathcal G_S (\alpha, \beta)|<\infty$;
		\item every geodesic in $\mathcal G_S(\alpha, \beta)$ is a tight geodesic.
	\end{enumerate}
	\end{lemma}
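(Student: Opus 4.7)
The plan is to prove the equivalence in two directions. The direction $(2) \Rightarrow (1)$ is immediate: if every element of $\mathcal G_S(\alpha,\beta)$ is tight, then $\mathcal G_S(\alpha,\beta) \subseteq \mathcal G_S^T(\alpha,\beta)$, which is finite by Proposition \ref{Pro:TightFinite}.

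For $(1) \Rightarrow (2)$ I would argue the contrapositive: assuming a non-tight geodesic $\Gamma = [a_0, a_1, \dots, a_d]$ (with $a_0=\alpha$, $a_d=\beta$), I show $\mathcal G_S(\alpha,\beta)$ is infinite. Non-tightness yields an index $i \in \{1,\dots,d-1\}$ and a vertex $b \in \mathcal C(S)$ with $b \cap a_i \neq \emptyset$ yet $b$ disjoint from $a_{i-1}\cup a_{i+1}$. The strategy is to produce infinitely many replacements for $a_i$ among common neighbors of $a_{i-1}$ and $a_{i+1}$, each substitution giving a new geodesic of length $d$ from $\alpha$ to $\beta$.

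To manufacture these replacements, let $\Sigma$ be the component of $S \setminus N(a_{i-1}\cup a_{i+1})$ (with $N$ a regular neighborhood) containing $a_i$. Since $b$ is disjoint from $a_{i-1}\cup a_{i+1}$ and meets $a_i$, the curve $b$ must lie in the same component $\Sigma$, so both $a_i$ and $b$ are essential simple closed curves in $\Sigma$ that intersect. Iterating a Dehn twist $T_b$ along $b$ applied to $a_i$ then produces the infinite family $\{T_b^k(a_i)\}_{k\in \mathbb Z}$ of pairwise non-isotopic essential simple closed curves contained in $\Sigma$; pairwise distinctness follows from $i(b,a_i)>0$ and the standard fact that such a Dehn twist has infinite order on the isotopy class of $a_i$.

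Each such class $c = T_b^k(a_i)$ sits in the interior of $\Sigma$, so it is disjoint from, and distinct from, $a_{i-1}$ and $a_{i+1}$; because $\Sigma$ is incompressible in $S$ (its boundary curves $a_{i-1}, a_{i+1}$ being essential in $S$), $c$ is essential in $S$ as well. Hence $c$ is a vertex of $\mathcal C(S)$ adjacent to both $a_{i-1}$ and $a_{i+1}$, and replacing $a_i$ in $\Gamma$ by $c$ yields a geodesic of length $d$ from $\alpha$ to $\beta$. Varying $k$ gives infinitely many distinct geodesics, completing the contrapositive. The step I expect to be the main obstacle is the incompressibility and essentiality verification, in particular confirming that the Dehn-twist images remain pairwise non-isotopic essential curves of $S$ (not merely of $\Sigma$); this is a standard but delicate subsurface-topology check, with the sporadic possibilities for $\Sigma$ such as $(0,4)$ or $(1,1)$ requiring some attention.
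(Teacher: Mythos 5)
Your proof is correct and follows essentially the same route as the paper: the direction $(2)\Rightarrow(1)$ via the Masur--Minsky finiteness of tight geodesics, and the converse by Dehn-twisting $a_i$ about the witness curve $b$ to produce infinitely many distinct geodesics. In fact you state the negation of tightness correctly ($b\cap a_i\neq\emptyset$ and $b\cap(a_{i-1}\cup a_{i+1})=\emptyset$), whereas the paper's printed proof interchanges these two conditions (evidently a typo, since twisting about a curve disjoint from $a_i$ would fix $a_i$); also, your incompressibility check for $\Sigma$ is unnecessary, because $T_b^k(a_i)$ is the image of an essential curve under a homeomorphism of $S$ and is therefore automatically an essential curve of $S$.
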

\begin{proof}
		From Proposition \ref{Pro:TightFinite}, we see that (2) implies (1).
		
		To show the converse, we assume that there is a geodesic $L=[\alpha, a_1, \dots , a_{d-1}, \beta]$ which is not a tight geodesic.
		Then, there exists $b\in \mathcal C(S)$ and $i\in \{1, \dots , d-1\}$ such that $b\cap a_i=\emptyset$ but $b\cap (a_{i-1}\cup a_{i+1})\not=\emptyset$.
		Then, the image of $a_i$ by the $n$-times Dehn twists $(n\in \mathbb N)$ with respect to $b$, which is denoted by $a_{ij}^{(n)}$, makes a new geodesic connecting $\alpha$ and $\beta$.
		It contradicts $|\mathcal G_S (\alpha, \beta)|<\infty$.
		Thus, we verify that $L$ has to be a tight geodesic.
	\end{proof}
	
	Now, we prove Theorem \ref{Thm}. 
	
	Let $k\in \mathbb N$ be an element of $\mathfrak{Sp}_d(S)$.
	Then, there exist $\alpha, \beta$ in $\mathcal C(S)$ such that $|\mathcal G_S(\alpha, \beta)|=k$.
	It follows from Lemma 3.1 that $\mathcal G_S(\alpha, \beta)\subset \mathcal G_S^T(\alpha, \beta)$.
	On the other hand, we have $\mathcal G_S(\alpha, \beta)\supset \mathcal G_S^T(\alpha, \beta)$ from the definitions.
	Thus, we conclude that $\mathcal G_S(\alpha, \beta)= \mathcal G_S^T(\alpha, \beta)$ and $k\in \mathfrak{Sp}_d(S)$.
	This implies that $\mathfrak{Sp}_d(S)\subset \mathfrak{Sp}_d^T(S)$and the proof is completed.

\section{Proof of Theorem \ref{ThmI} (Part I)}
Let $S$ be an oriented surface of type $(g, n)$ with $3g+n\geq 7$.
As the first step, we show the following:
\begin{Pro}
\label{3g-4+n}
$\mathfrak{Sp}_2^T(S)\subset \{1, 2, \dots, 3g-4+n\}$.	
\end{Pro}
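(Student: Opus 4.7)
The plan is to reduce the count of tight length-$2$ geodesics to the classical pants-decomposition bound on pairwise disjoint essential simple closed curves. Let $\alpha, \beta \in \mathcal{C}(S)$ with $d_{\mathcal C(S)}(\alpha,\beta)=2$, and write each tight geodesic between them as $[\alpha, a_1, \beta]$. It suffices to bound the number $N$ of possible ``tight midpoints'' $a_1$ by $3g-4+n$.

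The crucial observation is that any two distinct tight midpoints must be disjoint. Fix tight midpoints $a_1^{(1)} \neq a_1^{(2)}$. Tightness of $[\alpha, a_1^{(1)}, \beta]$ says that every $b \in \mathcal{C}(S)$ meeting $a_1^{(1)}$ also meets $\alpha \cup \beta$; equivalently, in contrapositive form, every curve disjoint from $\alpha \cup \beta$ is disjoint from $a_1^{(1)}$. Since $a_1^{(2)}$ is itself a midpoint of a length-$2$ geodesic between $\alpha$ and $\beta$, it is disjoint from both $\alpha$ and $\beta$. The contrapositive then forces $a_1^{(1)} \cap a_1^{(2)} = \emptyset$. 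Thus the set $T = \{a_1^{(1)}, \ldots, a_1^{(N)}\}$ of tight midpoints, which is finite by Proposition \ref{Pro:TightFinite}, consists of pairwise disjoint essential simple closed curves.

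Finally, adjoin $\alpha$ to this family: each $a_1^{(i)}$ is disjoint from $\alpha$ (it is a midpoint) and non-homotopic to $\alpha$ (it lies at distance $1$ from $\alpha$ in $\mathcal{C}(S)$, not $0$). Hence $\{\alpha\} \cup T$ is a pairwise disjoint, pairwise non-homotopic collection of essential simple closed curves on $S$, so its cardinality is at most the pants-decomposition number $3g-3+n$. Therefore $N + 1 \leq 3g-3+n$, i.e., $N \leq 3g-4+n$, proving $\mathfrak{Sp}_2^T(S) \subset \{1,\ldots,3g-4+n\}$. The only delicate point in the argument is parsing the tightness condition correctly: once rewritten as its contrapositive, it becomes a clean disjointness statement among midpoints, and the bound then follows from the elementary pants-decomposition count.
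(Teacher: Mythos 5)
Your proof is correct, and its overall architecture is the same as the paper's: show that the midpoints of the length-$2$ tight geodesics are pairwise disjoint, adjoin $\alpha$ to obtain a pairwise disjoint, pairwise non-homotopic family of essential curves, and invoke the pants-decomposition bound $3g-3+n$ to get $k+1\leq 3g-3+n$. The one step where you genuinely diverge is the disjointness of two distinct midpoints $a_1^{(1)}, a_1^{(2)}$. The paper argues indirectly: if $a_1^{(1)}\cap a_1^{(2)}\neq\emptyset$, Dehn-twisting one about the other produces a new geodesic $[\alpha, b,\beta]$ which the authors assert is again tight, contradicting the assumed count $|\mathcal G_S^T(\alpha,\beta)|=k$; the tightness of the twisted geodesic is asserted rather than checked. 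You instead read Definition \ref{Def:Tight} in contrapositive form: tightness of $[\alpha, a_1^{(1)},\beta]$ says every curve disjoint from $\alpha\cup\beta$ is disjoint from $a_1^{(1)}$, and $a_1^{(2)}$ is such a curve since it is the midpoint of a length-$2$ geodesic. This is more direct, uses only the definition, and avoids the unverified claim in the paper's version; the trade-off is negligible, so your route is arguably the cleaner of the two. One cosmetic remark: the finiteness of the set of midpoints via Proposition \ref{Pro:TightFinite} is not needed, since the bound $3g-3+n$ on any pairwise disjoint, pairwise non-homotopic collection already yields it.
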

\begin{proof}
	Suppose that $k\in \mathfrak{Sp}_2^T(S)$.
	Then, there exists $\alpha, \beta\in \mathcal C(S)$ such that $|\mathcal G_S^T (\alpha, \beta)|=k$.
	It follows from the definition of the distance in $\mathcal C(S)$ that there exist essential simple closed curves $\alpha, \beta, a_1, \dots , a_k$ on $S$ such that
	$[\alpha, a_i, \beta]$ $(i=1, \dots , k)$ are tight geodesics.
	We observe 
	\begin{equation*}
		\alpha\cap\beta\not=\emptyset, \alpha\cap a_i=\emptyset, \beta\cap a_i=\emptyset \quad (i=1, 2, \dots, k).
	\end{equation*}
	Then, we see that $a_i\cap a_j=\emptyset$ up to isotopy if $i\not= j$.
	Indeed, if $a_i\cap a_j\not=\emptyset$, then the image of $a_j$ by the Dehn twist with respect to $a_i$, denoted by $b_j^{(i)}$, makes a new geodesic $[\alpha, b_j^{(i)}, \beta]$ and it is also a tight geodesic.
	This contradicts $|\mathcal G_S^T (\alpha, \beta)|=k$.
	
	Therefore, $A:=\{\alpha, a_1, \dots , a_k\}$ is a set of mutually disjoint essential simple closed curve.
	Thus, $|A|=k+1\leq 3g-3+n$ and we obtain the desired result.
\end{proof}

Now, let us consider $M_2^T(g, n):=\max\{k \mid k\in \mathfrak{Sp}_2^T(g, n)\}$.
It follows from Proposition \ref{3g-4+n} that $M_2^T(g, n)\leq 3g-4+n$.
First, we show $M_2^T(g, n)\geq \lfloor\frac{3g+n}{2}\rfloor\geq 3$.
To see this, we consider the following cases.
\begin{description}
	\item [(i)] both $g$ and $n$ are even numbers;
	\item [(ii)] both $g$ and $n$ are odd numbers.;
	\item [(iii)] $g$ is an odd number and $n$ is an even number;
	\item [(iv)] $g$ is an even number and $n$ is an odd number.
\end{description}
{\bf Case (i)}: In this case, $3g+n\geq 8$ since $3g+n$ is an even number.
We consider $\frac{g}{2}$ pairs of pants, $P_1, \dots , P_{\frac{g}{2}}$, and $\frac{n}{2}$ twice punctured disk, $D_1, \dots , D_{\frac{n}{2}}$ in $S$ so that they are mutually disjoint and $X:=S\setminus (\cup_{i=1}^{\frac{g}{2}}\overline{P_i}\cup\cup_{i=1}^{\frac{n}{2}}\overline{D_i})$ is a connected surface of genus $0$.
Then, it is easily seen that the number of the components of $\partial X$ is 
\begin{equation*}
	3\cdot\frac{g}{2}+\frac{n}{2}=\frac{3g+n}{2}=\left \lfloor \frac{3g+n}{2}\right \rfloor \geq 4,
\end{equation*}
and $X$ is not a simple surface.
Therefore, from Proposition \ref{Pro:ExistFilling} There exist $\alpha, \beta\in \mathcal C(S)$ such that $\{\alpha, \beta\}$ is a filling set in $X$.
We denote by $a_1, \dots , a_{\frac{3g+n}{2}}$ the set of components of $\partial X$.
Then, we see that $[\alpha, a_i, \beta]$ $(i=1, \dots, \frac{3g+n}{2})$ are geodesics in $\mathcal C(S)$ of length 2.
Moreover, they are tight geodesics since $P_i$ $(i=1, \dots , \frac{g}{3})$ and $D_i$ $(i=1, \dots , \frac{n}{2})$ are simple surfaces.
Thus, we verify that $M_2^T (g, n)\geq \left \lfloor \frac{3g+n}{2}\right \rfloor$.

\noindent
{\bf Case (ii)}: In this case, we also see that $3g+n\geq 8$.
We consider $\frac{g-1}{2}$ pairs of pants, $P_1, \dots , P_{\frac{g-1}{2}}$, $\frac{n-1}{2}$ twice punctured disk, $D_1, \dots , D_{\frac{n-1}{2}}$ and an annulus $A$ with one puncture in $S$ so that they are mutually disjoint and $X:=S\setminus (\cup_{i=1}^{\frac{g-1}{2}}\overline{P_i}\cup\cup_{i=1}^{\frac{n-1}{2}}\overline{D_i}\cup \overline A )$ is a connected surface of genus $0$.
Then, the number of the components of $\partial X$ is
\begin{equation*}
	3\cdot \frac{g-1}{2}+\frac{n-1}{2}+2=\frac{3g+n}{2}=\left \lfloor \frac{3g+n}{2}\right \rfloor\geq 4.
\end{equation*}
and $X$ is not a simple surface.
Therefore, from Proposition \ref{Pro:ExistFilling} there exist $\alpha, \beta\in \mathcal C(S)$ such that $\{\alpha, \beta\}$ is a filling set in $X$.
We denote by $a_1, \dots , a_{\frac{3g+n}{2}}$ the set of components of $\partial X$.
Then, we see that $[\alpha, a_i, \beta]$ $(i=1, \dots, \frac{3g+n}{2})$ are geodesics in $\mathcal C(S)$ of length 2.
Moreover, they are tight geodesics since $P_i$ $(i=1, \dots , \frac{g}{3})$, $D_i$ $(i=1, \dots , \frac{n}{2})$ and $A$ are simple surfaces.
Thus, we verify that $M_2^T (g, n)\geq \left \lfloor \frac{3g+n}{2}\right \rfloor$.

\noindent
{\bf Case (iii)}: We consider $\frac{g-1}{2}$ pairs of pants, $P_1, \dots , P_{\frac{g-1}{2}}$, $\frac{n}{2}$ twice punctured disk, $D_1, \dots , D_{\frac{n}{2}}$, and a non-separating simple closed curve $\gamma$ in $S$ so that they are mutually disjoint and $X:=S\setminus (\cup_{i=1}^{\frac{g-1}{2}}\overline{P_i}\cup\cup_{i=1}^{\frac{n}{2}}\overline{D_i}\cup\gamma )$ is a connected surface of genus $0$.
Then, we see that $X$ is not a simple surface and we may take $\alpha, \beta\in \mathcal C(S)$ so that $\{\alpha, \beta\}$ is a filling set in $X$ as above.

We also see that the boundary curves of $P_i$ $(i=1, \dots , \frac{g-1}{2})$ and $D_i$ $(i=1, \dots \frac{n}{2})$ together with $\gamma$ give tight geodesics between $\alpha$ and $\beta$ of length 2.
The number of the geodesics is
\begin{equation*}
	3\cdot \frac{g-1}{2}+\frac{n}{2}+1=\frac{3g+n-1}{2}=\left \lfloor \frac{3g+n}{2}\right \rfloor ,
\end{equation*}
and we obtain  $M_2^T (g, n)\geq \left \lfloor \frac{3g+n}{2}\right \rfloor$.

\noindent
{\bf Case (iv)}: 
We consider $\frac{g}{2}$ pairs of pants, $P_1, \dots , P_{\frac{g}{2}}$, $\frac{n-1}{2}$ and twice punctured disk, $D_1, \dots , D_{\frac{n-1}{2}}$ in $S$ so that they are mutually disjoint and $X:=S\setminus (\cup_{i=1}^{\frac{g}{2}}\overline{P_i}\cup\cup_{i=1}^{\frac{n-1}{2}}\overline{D_i})$ is a connected surface of genus $0$ with one puncture.
Then, the number of the boundary curves of $X$ is
\begin{equation*}
	3\cdot \frac{g}{2}+\frac{n-1}{2}=\frac{3g+n-1}{2}=\left \lfloor \frac{3g+n}{2}\right \rfloor.
\end{equation*}

By taking a filling set $\{\alpha, \beta\}$ in $X$, we obtain $M_2^T (g, n)\geq \left \lfloor \frac{3g+n}{2}\right \rfloor$.

\medskip
In all cases above, any component of $S\setminus \overline X$ is not simple.
Hence, there are no geodesics other than $[\alpha, a_i, \beta]$ $(i=1, \dots , \lfloor\frac{3g+n}{2}\rfloor )$.
This implies that $\mathcal G_S(\alpha, \beta)=\mathcal G_S^T(\alpha, \beta)$ and we have the following:
\begin{lemma}
\label{lemma:Both are same}
	There exist $\alpha, \beta\in \mathcal C(S)$ such that $|\mathcal G_S(\alpha, \beta)|=|\mathcal G_S^T(\alpha, \beta)|=\lfloor\frac{3g+n}{2}\rfloor$.
\end{lemma}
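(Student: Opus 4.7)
The plan is to consolidate the four case constructions just completed into a single statement. In each of Cases (i)--(iv) we have produced $\alpha, \beta \in \mathcal C(S)$ together with a subsurface $X \subset S$ whose boundary consists of $N := \lfloor (3g+n)/2\rfloor$ curves $a_1, \dots, a_N$, such that $\{\alpha, \beta\}$ is a filling pair in $X$ and every component of $S \setminus \overline X$ is a simple surface (a pair of pants, a twice-punctured disk, an annulus with a puncture, or an annular neighborhood of a non-separating curve). Since each $a_i$ is disjoint from both $\alpha$ and $\beta$, and the filling property forces $\alpha$ and $\beta$ to be non-homotopic, we have $d_{\mathcal C(S)}(\alpha,\beta)=2$, and each $[\alpha, a_i, \beta]$ is a tight geodesic of length $2$, tightness being ensured precisely by the simplicity of the pieces of $S\setminus \overline X$ adjacent to $a_i$. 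This already gives $|\mathcal G_S^T(\alpha, \beta)| \geq N$.

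It then suffices to establish the matching upper bound $|\mathcal G_S(\alpha, \beta)| \leq N$, since $\mathcal G_S^T(\alpha, \beta) \subset \mathcal G_S(\alpha, \beta)$ by definition. Let $c \in \mathcal C(S)$ satisfy $c \cap \alpha = c \cap \beta = \emptyset$, and isotope $c$ so as to minimize its geometric intersection with $\partial X = a_1 \cup \cdots \cup a_N$. Any subarc of $c$ lying in $X$ is then an essential arc disjoint from the filling pair $\{\alpha, \beta\}$ in $X$, hence $\partial$-parallel (otherwise a standard regular-neighborhood argument would yield an essential simple closed curve in $X$ disjoint from $\alpha \cup \beta$, contradicting Proposition \ref{Pro:filling} applied to $X$); it can therefore be pushed off by a further isotopy. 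Likewise any subarc of $c$ lying in $S \setminus \overline X$ sits inside a simple surface and is $\partial$-parallel for the same reason. Consequently $c$ is homotopic to a component of $\partial X$, i.e., to one of the $a_i$.

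The main obstacle is precisely the verification underlying this last step: one must argue that the pair $\{\alpha, \beta\}$ really fills $X$ in the strong form required (no essential simple closed curve of $X$ is disjoint from both), and that, after standard minimal-intersection reduction, $c$ can be confined to one side of $\partial X$. Given that simple surfaces admit no essential simple closed curves and that the filling property in $X$ is furnished in each case by our use of Proposition \ref{Pro:ExistFilling} inside $X$, one obtains $\mathcal G_S(\alpha, \beta) = \{[\alpha, a_i, \beta] : 1 \leq i \leq N\}$, whence $|\mathcal G_S(\alpha, \beta)| = |\mathcal G_S^T(\alpha, \beta)| = N$, which is the content of the lemma.
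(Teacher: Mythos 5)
Your proposal is correct and follows essentially the same route as the paper: the lower bound comes from the boundary curves of $X$ in the four case constructions (tightness being guaranteed by the simplicity of the complementary pieces), and the upper bound comes from showing that any curve disjoint from the filling pair $\{\alpha,\beta\}$ must be isotopic to some $a_i$, which the paper asserts directly from the simplicity of the components of $S\setminus\overline X$. One small inaccuracy: your claim that a subarc of $c$ lying in a component of $S\setminus\overline X$ is $\partial$-parallel ``for the same reason'' is false as stated (a pair of pants contains essential arcs); but this step is unnecessary, since the $\partial$-parallelism of the arcs of $c\cap X$ alone already contradicts minimal position and forces $c\cap\partial X=\emptyset$, after which $c$ is peripheral either because $\{\alpha,\beta\}$ fills $X$ or because the complementary pieces, being simple, contain no essential simple closed curves.
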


\medskip

Next, we show $M_2^T (g, n)\leq \left \lfloor \frac{3g+n}{2}\right \rfloor$.

We take $\alpha, \beta\in \mathcal C(S)$ so that $|\mathcal G_S^T (\alpha, \beta)|=M_2^T(g, n)$.
Then, there exist $a_1, \dots , a_{M_2^T}$ in $\mathcal C(S)$ such that $[\alpha, a_i, \beta]$ $(i=1, \dots, M_2^T)$ are tight geodesics.

	As we have seen in the proof of Proposition \ref{3g-4+n}, $a_i\cap a_j=\emptyset$ $(i\not= j)$.
	So, we may take an annular neighborhood $N_i$ of $a_i$ $(i=1, \dots , M_2^T(g, n))$ sufficiently small so that
	\begin{equation*}
		N_i\cap \alpha=\emptyset, N_i\cap\beta=\emptyset , N_i\cap N_j=\emptyset \quad (i\not=j).
	\end{equation*}
	Let $X$ be a connected component of $S\setminus \cup_{i=1}^{M_2^T(g, n)}\overline{N_i}$ containing $\alpha\cup\beta$ and $U_1, \dots , U_{\ell}$ be the set of connected components of $S\setminus \overline X$.
Each $U_i$ is a surface of type $(g_i, m_i, n_i)$, where $g_i$ is the genus, $m_i$ is the number of borders and $n_i$ is the number of the punctures of $U_i$.
Note that $m_i\geq 1$ for any $i\in\{1, \dots , \ell \}$.
Then, $U_i$ belongs to one of the following types.
\begin{description}
	\item [Type I] a surface of type $(0, 2, 0)$;
	\item [Type II] a surface of type $(g_i, 1, n_i)$;
	\item [Type III] a surface of type $(g_i, m_i, n_i)\not=(0, 2, 0)$ for $m_i\geq 2$.
\end{description}

A surface $U_i$ belonging to Type I is an annulus and the core curve is homotopic to some $a_k$ $(k\in \{1, \dots , M_2^T(g, n)\})$.
If $U_i$ is of Type II, then the boundary curve is homotopic to some $a_k$ $(k\in \{1, \dots , M_2^T(g, n)\})$.
For $U_i$ of Type III, each boundary curve is homotopic to some $a_k$ $(k\in \{1, \dots , M_2^T(g, n)\})$.

From the construction of $X$, we see that all homotopy classes of such core curves and boundary curves are distinct to each other.
Moreover, we may show that they agree with the set of homotopy classes of $\{a_1, \dots , a_{M_2^T(g, n)}\}$.
Indeed, if a homotopy class of $\{a_1, \dots , a_{M_2^T(g, n)}\}$, say $a_k$ is not contained in the homotopy classes of them, then there exists $U_i$ such that it contains a curve $a_k'$ which is homotopic to $a_k$.
The surface $U_i$ should be of Type II or Type III.
Then, we may find an essential curve $b$ in $U_i$ which crosses $a_i'$.
It contradicts that each boundary curve makes a tight geodesic between $\alpha$ and $\beta$.
Hence, we obtain the following:
\begin{lemma}
\label{BoundaryOfX}
	The number of boundary curves of $X$ is $M_2^T(g, n)+N_1$, where $N_1$ is the number of $U_i$'s of Type I.
	Hence, the number of the boundary curves of $X$ is not less than $3$.
\end{lemma}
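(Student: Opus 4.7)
The plan is to count the boundary circles of $X$ directly by inspecting each annular neighborhood $N_k$ and asking how many of its two boundary circles lie on $\partial X$. Write the two boundary components of $N_k$ as $c_k^+$ and $c_k^-$. Since the $N_k$'s are pairwise disjoint, each of the $2 M_2^T(g,n)$ circles $c_k^{\pm}$ bounds exactly one component of $S \setminus \bigcup_i \overline{N_i}$ on its non-$N_k$ side. In particular, $\partial X \subset \bigcup_{k} \{c_k^+, c_k^-\}$, so the counting problem is to determine, for each $k$, how many of $c_k^+, c_k^-$ lie on $\partial X$.

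Next, I would identify when $N_k$ itself becomes a Type I component. Because $X$ is disjoint from every $N_i$, we have $N_k \subset S \setminus \overline{X}$, so $N_k$ is contained in some $U_j$. I claim that $N_k$ equals a $U_j$ (which is then necessarily Type I) precisely when both $c_k^+$ and $c_k^-$ lie on $\partial X$: if both sides of $N_k$ abut $X$ then $N_k$ is isolated in $S \setminus \overline{X}$, while if at least one side $c_k^{\pm}$ does not lie on $\partial X$, then $N_k$ is joined through that side to further pieces of $S \setminus \overline{X}$, so the component of $S \setminus \overline{X}$ containing $N_k$ is strictly larger than $N_k$. Conversely, a Type I component cannot contain two distinct $N_k$'s, since a single annulus has a unique isotopy class of essential curve and this would force two of the $a_k$'s to coincide, contradicting the distinctness established earlier. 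So Type I $U_j$'s are in bijection with those indices $k$ for which both $c_k^{\pm}$ lie on $\partial X$, and in particular there are exactly $N_1$ such indices.

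The count now falls out if we can rule out the possibility that \emph{neither} of $c_k^+, c_k^-$ lies on $\partial X$ for some $k$. But this is exactly what the preceding paragraph of the paper establishes: every homotopy class in $\{a_1,\ldots,a_{M_2^T(g,n)}\}$ appears either as the core of a Type I $U_j$ or as a boundary curve of a Type II/III $U_j$. Cores of Type I $U_j$'s correspond to indices $k$ with both $c_k^{\pm}\in\partial X$, and boundaries of Type II/III $U_j$'s are curves lying on $\partial X$ itself. Hence each $a_k$ forces at least one of $c_k^{\pm}$ to lie on $\partial X$. Summing, $N_1$ indices contribute two boundary circles each and the remaining $M_2^T(g,n) - N_1$ indices contribute exactly one, giving
\begin{equation*}
|\partial X| \;=\; 2N_1 + (M_2^T(g,n) - N_1) \;=\; M_2^T(g,n) + N_1.
\end{equation*}

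For the final inequality, the construction carried out earlier in this section gives $M_2^T(g,n) \geq \lfloor (3g+n)/2 \rfloor$, and the hypothesis $3g+n \geq 7$ yields $\lfloor (3g+n)/2\rfloor \geq 3$. Combined with $N_1 \geq 0$ this gives $M_2^T(g,n) + N_1 \geq 3$. The main subtlety in the argument is the bijection between Type I components and indices $k$ with both $c_k^{\pm}\subset \partial X$; apart from that, the proof is a careful accounting.
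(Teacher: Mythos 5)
Your proof is correct and follows essentially the same route as the paper: each Type I component contributes both boundary circles of its annular neighborhood to $\partial X$, every other $a_k$ contributes exactly one, and the lower bound $M_2^T(g,n)+N_1\geq 3$ follows from $M_2^T(g,n)\geq \lfloor\frac{3g+n}{2}\rfloor\geq 3$. You simply spell out in more detail the bijection between Type I components and indices $k$ with both circles $c_k^{\pm}$ on $\partial X$, which the paper leaves implicit.
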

\begin{proof}
	Since each core curve of $U_i$ of Type I makes two boundary curves of $X$, we obtain the number $M_2^T(g, n)+N_1$.
	Since we have already seen that $M_2^T(g, n)\geq \lfloor\frac{3g+n}{2}\rfloor \geq 3$, the number $M_2^T(g, n)+N_1$ is also $\geq 3$.
\end{proof}

We may control the topological type of $X$.
		\begin{lemma}
		\label{lemma:XisPlanar}
		The component $X$ is a planar surface containing at most one puncture of $S$.
	\end{lemma}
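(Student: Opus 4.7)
The plan is to combine Euler characteristic additivity with the boundary count of Lemma \ref{BoundaryOfX} to obtain a sharp upper bound on $M_2^T(g,n)$, and then to compare it with the lower bound $M_2^T(g,n) \geq \lfloor (3g+n)/2 \rfloor$ that has just been established in cases (i)--(iv).

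First I would refine the boundary count: each separating $a_i$ contributes one boundary circle to $X$ and one to the adjacent $U_j$, while each non-separating $a_i$ in $X$ contributes two boundary circles to $X$ together with the two boundary circles of its annular Type I $U_j$. Consequently $\sum_{j=1}^{\ell} m_j = b_X = M_2^T(g,n) + N_1$, and plugging this, together with $\sum_j n_j = n - p_X$, into the additivity identity $\chi(S) = \chi(X) + \sum_j \chi(U_j)$ gives
\[
M_2^T(g,n) \;=\; g + (N_2 + N_3) - g_X - \sum_{j=1}^{\ell} g_j,
\]
where $N_2$ and $N_3$ denote the numbers of $U_j$'s of Type II and Type III, respectively.

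Next I would verify that $\chi(U_j) \leq -1$ for every $U_j$ of Type II or III: the boundary components of $U_j$ are essential simple closed curves on $S$, so $U_j$ is neither a disk nor a once-punctured disk, and Type III further excludes the annulus $(0,2,0)$; a short case-check on $(g_j, m_j, n_j)$ finishes this. Summing, $\sum_j \chi(U_j) \leq -(N_2 + N_3)$, whence $\chi(X) \geq 2 - 2g - n + N_2 + N_3$. Substituting $\chi(X) = 2 - 2g_X - (M_2^T(g,n) + N_1) - p_X$ and eliminating $N_2 + N_3$ via the identity above produces the clean bound
\[
M_2^T(g,n) \;\leq\; \frac{3g + n - 3g_X - p_X - N_1 - \sum_{j} g_j}{2}.
\]

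Combining this with $M_2^T(g,n) \geq \lfloor (3g+n)/2 \rfloor$ forces
\[
3 g_X + p_X + N_1 + \sum_{j=1}^{\ell} g_j \;\leq\; 3g + n - 2\left\lfloor \frac{3g+n}{2} \right\rfloor \;\leq\; 1,
\]
from which $g_X = 0$ and $p_X \leq 1$ follow immediately; as a bonus, $g_j = 0$ for every $j$ and $p_X + N_1 \leq 1$, both of which will be useful in later estimates. The delicate point is the first step: one must distinguish carefully between the contributions of separating and non-separating $a_i$'s to $b_X$ and to $\sum m_j$, and this is exactly where the Type I/II/III classification and Lemma \ref{BoundaryOfX} do the essential bookkeeping.
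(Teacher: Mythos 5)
Your argument is correct, but it takes a genuinely different route from the paper's. The paper proves the lemma by a direct contradiction: if $X$ had positive genus (resp.\ two punctures), one takes a non-separating curve $\gamma$ in $X$ (resp.\ a curve bounding a twice-punctured disk), observes via Lemma \ref{BoundaryOfX} that $X$ cut along $\gamma$ is still non-simple, extracts a filling pair $\{\alpha',\beta'\}$ there by Proposition \ref{Pro:ExistFilling}, and gets $M_2^T(g,n)+1$ tight geodesics $[\alpha',\gamma,\beta']$, $[\alpha',a_i,\beta']$, contradicting maximality; this needs only $b_X\geq 3$ and uses neither Euler characteristics nor the lower bound on $M_2^T(g,n)$. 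You instead combine $\sum_j m_j=b_X=M_2^T(g,n)+N_1$ with $\chi(S)=\chi(X)+\sum_j\chi(U_j)$ and $\chi(U_j)\leq -1$ for Types II and III, and squeeze against the lower bound $M_2^T(g,n)\geq\lfloor(3g+n)/2\rfloor$ established in Cases (i)--(iv). I checked the bookkeeping: the identity $M_2^T(g,n)=g+N_2+N_3-g_X-\sum_j g_j$ and the resulting bound $2M_2^T(g,n)\leq 3g+n-3g_X-p_X-N_1-\sum_j g_j$ are both correct, and the final squeeze does force $g_X=0$ and $p_X\leq 1$. What your approach buys is substantial: since $2M_2^T(g,n)\leq 3g+n$ already gives $M_2^T(g,n)\leq\lfloor(3g+n)/2\rfloor$ by integrality, your computation simultaneously proves the upper bound that the paper derives only afterwards (and as a bonus yields $\sum_j g_j=0$ and $N_1+p_X\leq 1$), essentially collapsing Lemma \ref{lemma:XisPlanar} and the subsequent counting argument into one step; the price is that it leans on the lower bound from the explicit constructions, which the paper's proof of this particular lemma does not. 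One small imprecision in your narrative: the correct dichotomy for the boundary count is ``both boundary circles of $N_i$ lie on $\partial X$ (Type I) versus exactly one,'' not ``non-separating versus separating'' --- a non-separating $a_i$ need not bound a Type I annulus --- but since the formula you actually use is exactly the statement of Lemma \ref{BoundaryOfX}, nothing in the argument breaks.
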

\begin{proof}
	Suppose that the genus $g'$ of $X$ is positive.
	Then, there exists a non-separating simple closed curve $\gamma$ in $X$.
	Since the number of the boundary curves of $X$ is at least $3$ (Lemma \ref{BoundaryOfX}), $X_{\gamma}:=X\setminus\gamma$ is not simple.
	Hence, there exist essential simple closed curves $\alpha', \beta'$ in $X_{\gamma}$ such that $\{\alpha', \beta'\}$  is a filling set of $X$ (Proposition \ref{Pro:ExistFilling}).
	Therefore, $1$-skeltons $[\alpha', \gamma, \beta']$, $[\alpha', a_i, \beta']$ $(i=1, \dots , M_2^T(g, n))$ are geodesics connecting $\alpha', \beta'$ in $\mathcal C(S)$.
	Moreover, they are tight geodesics since $\{\alpha' , \beta'\}$ is a filling set.
	Hence, we obtain $(M_2^T(g, n)+1)$ tight geodesics.
	It contradicts the definition of $M_2^T(g, n)$ and we verify that $X$ is a planar surface.
	
	If $X$ contains two punctures, then we may take an essential simple closed curve $\gamma$ which surrounds the two punctures.
	The number of the boundary components of $X$ is at least $3$.
	Hence, $X_{\gamma}:= X\setminus\overline{D_{\gamma}}$ is not simple, where $D_{\gamma}$ is a domain bounded by $\gamma$ and containing the two punctures.
	By using exactly the same argument as above, we obtain a contradiction.
	
	Thus, we conclude that $X$ is a planar surface containing at most one puncture of $S$.	
\end{proof}
	
	Now, we count the number $M_2^T(g, n)$ from the topological type of $X$, that is $(g, n)$.
	
	If we attach $U_i$ of Type I to $X$, then the resulting surface is of genus one.
	
If we attach $U_i$ of Type II to $X$, then the resulting surface is of genus $g_i$.
We also see that for $U_i$ of Type II
\begin{equation}
\label{Type II}
	g_i\geq 1 \textrm{ or } n_i\geq 2
\end{equation}
since $a_k$ is an essential curve.

If we attach $U_i$ of Type III to $X$, then the resulting surface is of genus $g_i+m_i-1$.
Since $(g_i, m_i, n_i)\not=(0, 2, 0)$ and $m_i\geq 2$, we have
\begin{equation}
\label{Type III}
	g_i+m_i+n_i\geq 3
\end{equation}

Therefore, we obtain
\begin{eqnarray*}
	g&=&N_1+\sum_{\textrm{Tupe II}}g_i+\sum_{\textrm{Type III}}(g_i+m_i-1) \\
	n&=&\sum_{\textrm{Type II+Type III}}n_i +\varepsilon,
\end{eqnarray*}
and 
\begin{equation*}
	M_2^T(g, n)=N_1+N_2+\sum_{\textrm{Type III}}m_i,
\end{equation*}
where $N_1$ is the number of $U_i$'s belonging to Type I, $N_2$ is the number of $U_i$'s belonging to Type II, and $\varepsilon$ is the number of punctures in $X$.
Hence, $\varepsilon=0$ or $1$.

Then, we have
\begin{eqnarray*}
	3g+n&=&3\left\{N_1+\sum_{\textrm{Tupe II}}g_i+\sum_{\textrm{Type III}}(g_i+m_i-1)\right\}\\
	&+&\sum_{\textrm{Type II+Type III}}n_i +\varepsilon \\
	&=& 3N_1+\sum_{\textrm{Type II}}(3g_i+n_i)+\sum_{\textrm{Type III}}\{3(g_i+m_i-1)+n_i\}+\varepsilon \\
	&=& 3N_1+\sum_{\textrm{Type II}}(3g_i+n_i)+\sum_{\textrm{Type III}}(g_i+m_i+n_i-3) \\
	&+&2\sum_{\textrm{Type III}}(g_i+m_i)+\varepsilon .
\end{eqnarray*}
From (\ref{Type II}) and (\ref{Type III}), we have
\begin{equation*}
	3g+n\geq 3N_1+2N_2+2\sum_{\textrm{Type III}}m_i+\varepsilon\geq 2M_2^T(g, n)+\varepsilon.
\end{equation*}
Thus, we obtain $M_2^T(g, n)\leq \lfloor\frac{3g+n}{2}\rfloor$ as desired and we conclude that $M_2^T(g, n)= \lfloor\frac{3g+n}{2}\rfloor$.

On the other hand, we have already seen that $M_2(g, n)\leq M_2^T(g, n)$ from Theorem \ref{Thm} and there exist $\alpha, \beta\in \mathcal C(s)$ such that $|\mathcal G_S(\alpha, \beta)|=\lfloor\frac{3g+n}{2}\rfloor$ and $d_{\mathcal C(S)}(\alpha, \beta)=2$.
Therefore, we have $M_2(g, n)=M_2^T(g, n)$ and
\begin{equation*}
	\mathfrak{Sp}_2(g, n)\subset\mathfrak{Sp}_2^T(g, n)\subset\left\{1, \dots ,\left \lfloor\frac{3g+n}{2}\right\rfloor \right\}.
\end{equation*}

We will show all of them are the same.

\begin{Rem}
\label{Remark}
	In the last section, we give an inductive construction of $\alpha, \beta\in \mathcal C(S)$ with $|\mathcal G_{S}(\alpha, \beta)|=\left\lfloor \frac{3g+n}{2}\right\rfloor$.
	Hence, if once we know $M_2(g, n)\leq \left\lfloor \frac{3g+n}{2}\right\rfloor$, we may directly confirm that $M_2(g, n)=M_2^T(g, n)=\left\lfloor \frac{3g+n}{2}\right\rfloor$ without knowing $M_2(g, n) \geq \left\lfloor \frac{3g+n}{2}\right\rfloor$.
\end{Rem}

\medskip

Let $m\in \mathbb N$ be in $\{1, 2, \dots , \left\lfloor \frac{3g+n}{2}\right\rfloor-1\}$ given in Lemma \ref{lemma:Both are same}.
We consider $\alpha, \beta\in \mathcal C(S)$ with $d_{\mathcal C(S)}(\alpha, \beta)=2$ and $|\mathcal G_S (\alpha, \beta)|=\left\lfloor \frac{3g+n}{2}\right\rfloor$.
Then, there exist $a_1, \dots , a_{\left\lfloor \frac{3g+n}{2}\right\rfloor)} \in \mathcal C(S)$ such that they give the geodesics between $\alpha$ and $\beta$.

Let $X$ be a connected component of $S\setminus \cup_{i=1}^{m}a_i$ containing $\alpha, \beta$.
From the construction in Lemma \ref{lemma:Both are same}, every connected component of $S\setminus\overline{X}$ is a simple surface and $X$ is not simple.
Hence, it follow from Proposition \ref{Pro:ExistFilling} that there exist $\alpha', \beta'\in \mathcal C(X)$ such that $\{\alpha', \beta'\}$ is a filling set in $X$.
Also, we see that $\alpha'\cap a_i=\beta'\cap a_i=\emptyset$ for $i=1, \dots , m$.
This implies that $d_{\mathcal C(S)}(\alpha', \beta')=2$ and $|\mathcal G_S (\alpha', \beta')|\geq m$.
However, any essential simple closed curve in $S$ other than $a_1, \dots , a_m$ intersects $\alpha'$ or $\beta'$ since $\{\alpha', \beta'\}$ is a filling set in $X$ and any connected component of $S\setminus\overline X$ is a simple surface.
Therefore, we verify that $|\mathcal G_S (\alpha', \beta')|=m$ and $\mathfrak {Sp}_2(g, n)\ni m$.
Thus, we conclude that $\mathfrak {Sp}_2(g, n)=\{1, 2, \dots , \left\lfloor \frac{3g+n}{2}\right\rfloor\}$ and
\begin{equation*}
	\mathfrak{Sp}_2(g, n)=\mathfrak{Sp}_2^T(g, n)=\left\{1, \dots ,\left \lfloor\frac{3g+n}{2}\right\rfloor \right\}.
\end{equation*}

\section{Basic Operations}

In this section, we exhibit some basic operations on $S$ which give a new surface $S'$ so that $\mathfrak{Sp}_{2}(S')$ is counted by $\mathfrak{Sp}_2(S)$.
We will use those operations in later sections. 

Let $\alpha, \beta\in \mathcal C(S)$ with $d_{\mathcal C(S)}(\alpha, \beta)=2$.
Then, there exist representatives of $\alpha$ and $\beta$ with minimal intersections.
We use the same symbols $\alpha\in \alpha$ and $\beta\in \beta$ for the representatives and take an intersection point $p_0$ of them.
Our operations are done in a neighborhood of $p_0$.

\begin{description}
	\item [Operation I] This operation is done by adding punctures in $S$. First, we take $\alpha'\in \alpha$ so that $S\setminus (\alpha'\cup\beta)$ has two bigons. Then, we add one or two punctures in the bigons, and we have a new surface $S'$ (Figure \ref{Fig.Operation I}). 
 
\begin{figure}[htbp]
	\centering
	\includegraphics[width=12cm]{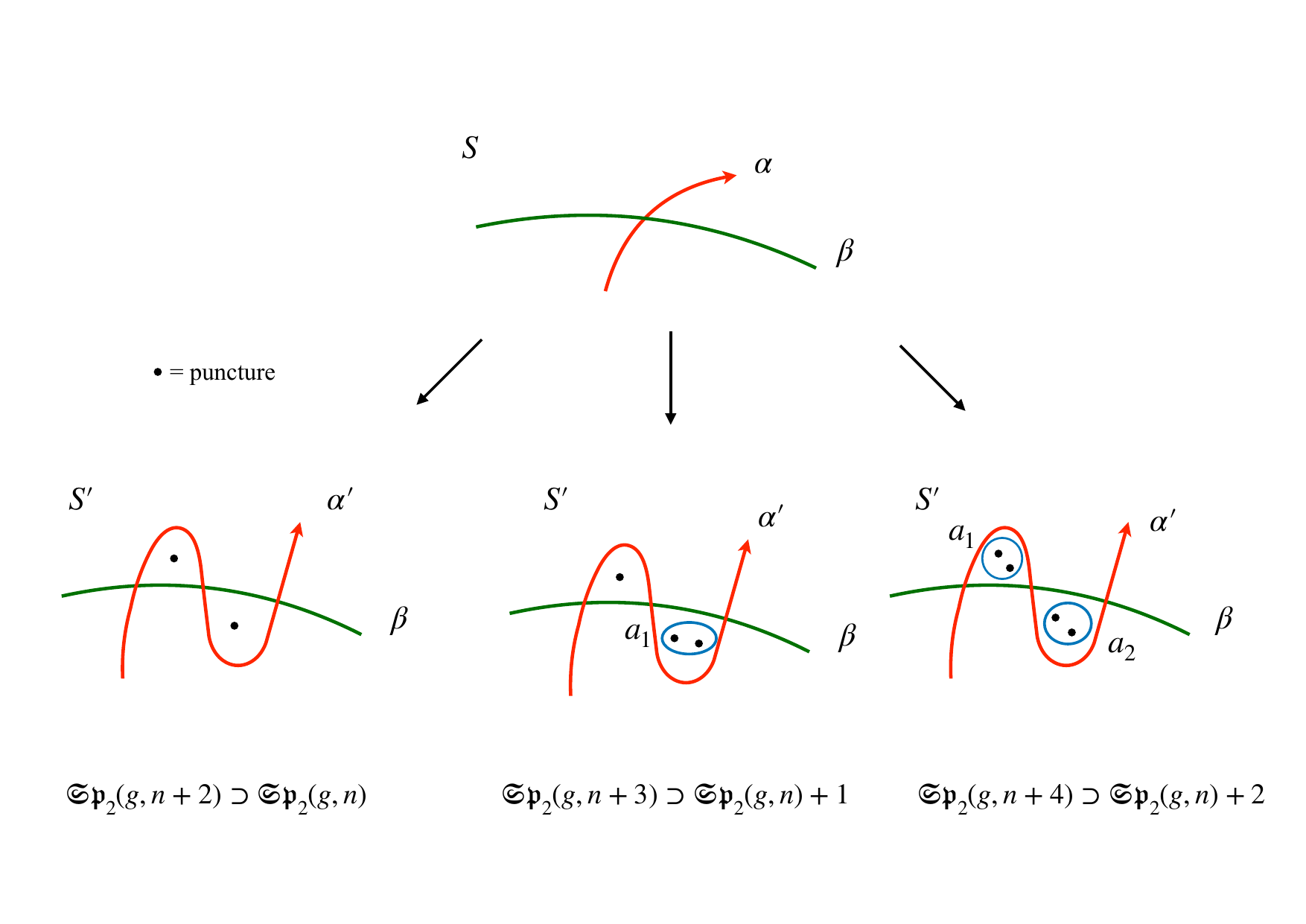}
	\caption{Operation I}
	\label{Fig.Operation I}
\end{figure}

Since we obtain a pair of pants or a punctured disk in $S'\setminus (\alpha'\cup\beta)$, simple closed curves $a_1, a_2$ in Figure \ref{Fig.Operation I} make new geodesics.
Hence, we have
\begin{equation}
\label{OpI-0}
	\mathfrak{Sp}_2(g, n+2)\supset \mathfrak{Sp}_2(g, n)
\end{equation}
\begin{equation}
\label{OpI-1}
	\mathfrak{Sp}_{2}(g, n+3)\supset \mathfrak{Sp}_2(g, n)+1,
\end{equation}
and
\begin{equation}
\label{OpI-2}
	\mathfrak{Sp}_{2}(g, n+4)\supset  \mathfrak{Sp}_2(g, n)+2,
\end{equation}
where $\mathfrak{Sp}_2(g, n)+k$ means
\begin{equation*}
	\mathfrak{Sp}_2(g, n)+k=\{n_1+k, \dots , n_{\ell}+k\}
\end{equation*}
for $\mathfrak{Sp}_2(g, n)=\{n_1, \dots , n_{\ell}\}$.


\medskip
\item [Operation II] In this operation, we add an annulus to $S$. First, we take $\alpha'\in \alpha$ so that $S\setminus (\alpha'\cup\beta)$ has two bigons as Operation I. Then, we add a cylinder connecting the bigons (Figure \ref{Fig.Operation II}). The resulting surface $S'$ is of type $(g+1, n)$ and the simple closed curve $a_1$ in Figure \ref{Fig.Operation II} makes a new geodesic.
Hence, we see
\begin{equation}
\label{OpII}
	\mathfrak{Sp}_2 (g+1, n)\supset \mathfrak{Sp}_2(g, n)+1.
\end{equation}

\begin{figure}[htbp]
	\centering
	\includegraphics[width=12cm]{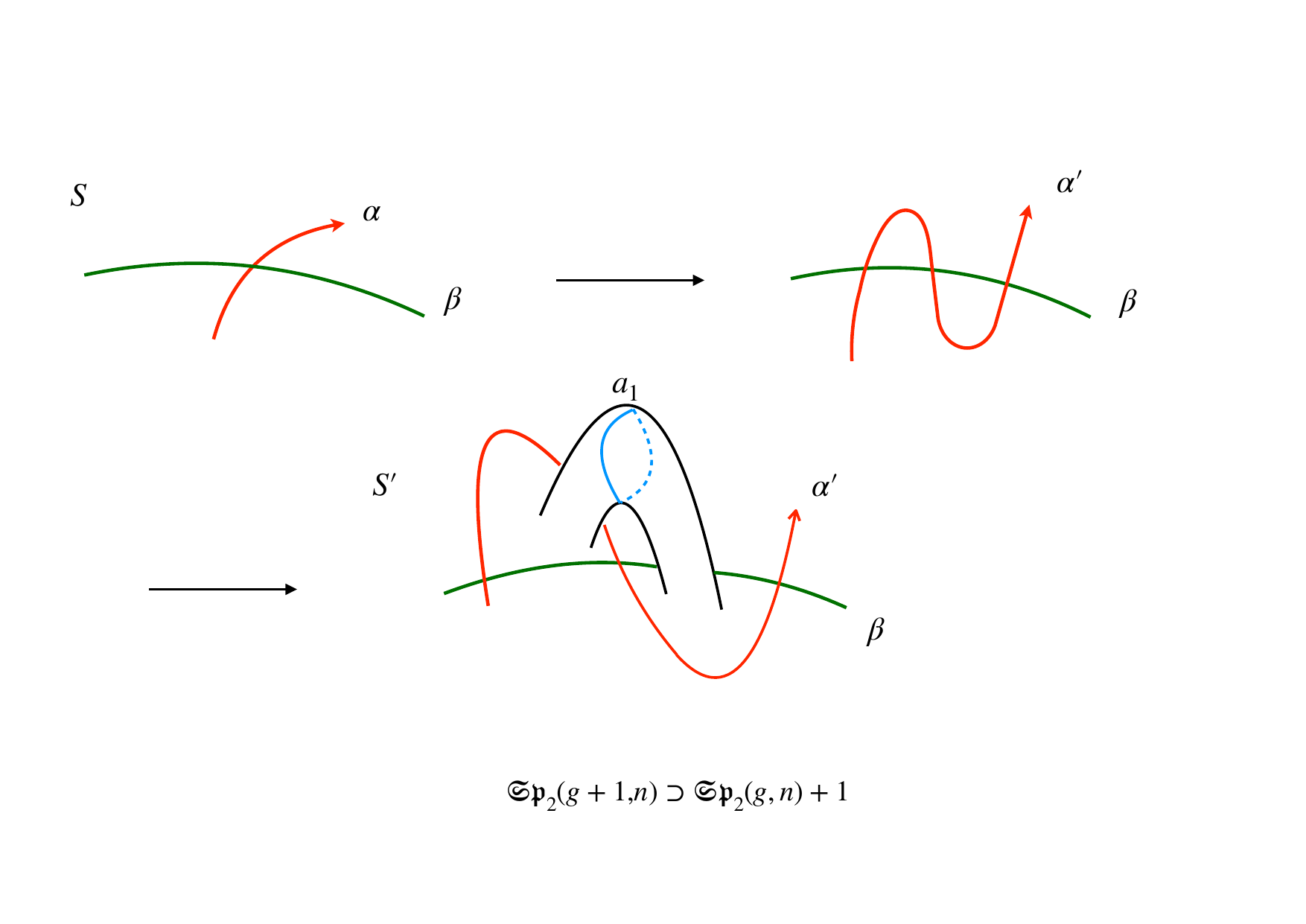}
	\caption{Operation II}
	\label{Fig.Operation II}
\end{figure}

\medskip

\item [Operation III] In this operation, we use a similar method to Operation II. We add a once punctured cylinder (= a pair of pants) instead of a cylinder in Operation II (Figure \ref{Fig.Operation III}). Hence, the resulting surface $S'$ is of type $(g+1, n+1)$. Then, we find a new pair of pants in $S'\setminus (\alpha'\cup\beta)$ bounded by subarcs of $\alpha'\cup\beta$ and the puncture. Therefore,
\begin{equation}
\label{OpIII}
	\mathfrak{Sp}_2(g+1, n+1)\supset \mathfrak{Sp}_2(g, n)+2.
\end{equation}
\begin{figure}[htbp]
\centering
	\includegraphics[width=12cm]{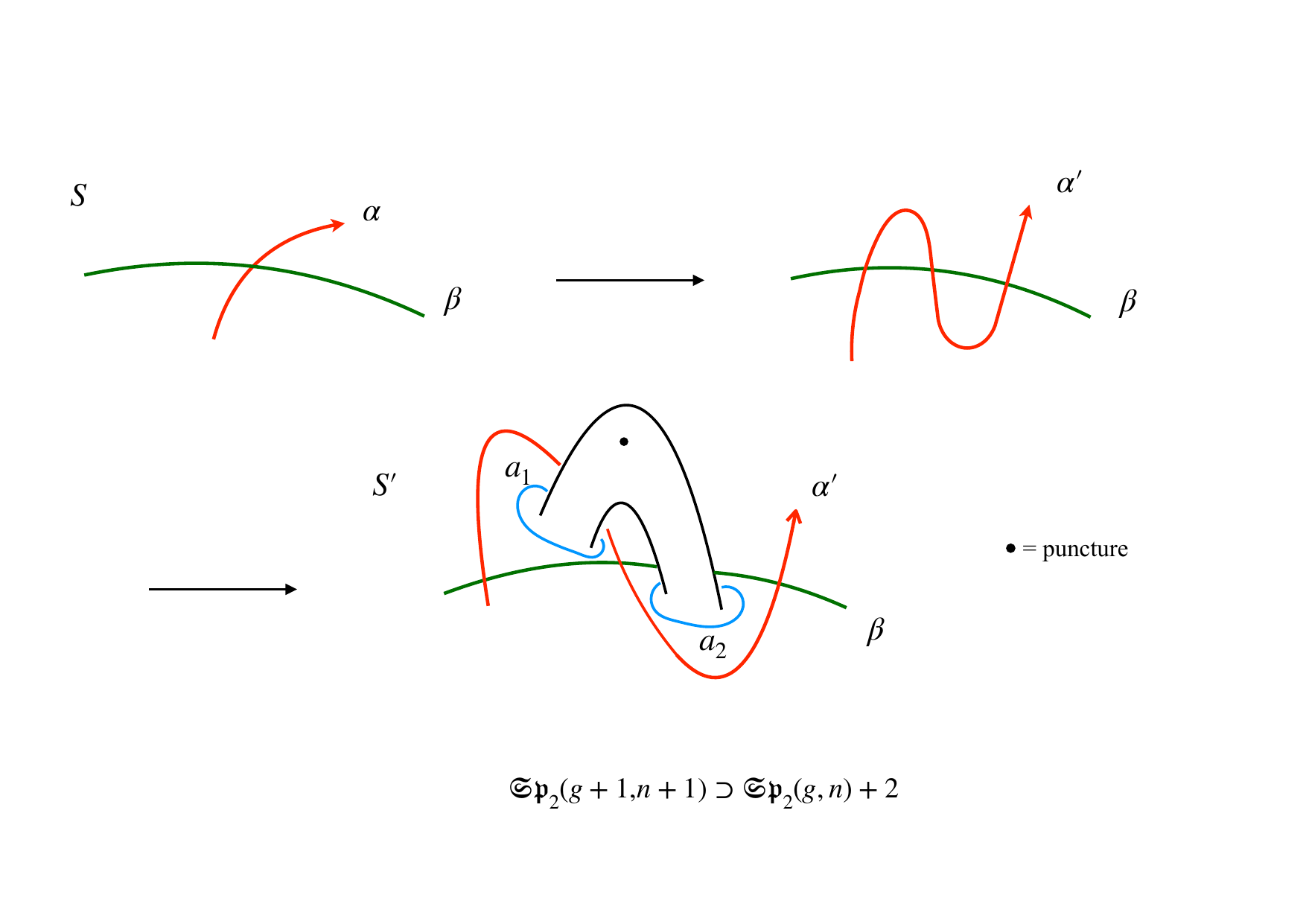}
	\caption{Operation III}
	\label{Fig.Operation III}
\end{figure}

\item [Operation IV] In this operation, we take $\alpha'\in \alpha$ so that $S\setminus (\alpha'\cup\beta)$ has six bigons. Choose three bigons in the six ones and put a pair of pants over them. Another pair of pants is also put on the other three bigons (Figure \ref{Fig. Operation IV}). The resulting surface $S'$ is of type $(g+4, n)$. Since we find new two pairs of pants in $S'\setminus (\alpha'\cup\beta)$ bounded by subarcs of $\alpha'\cup\beta$, we have
\begin{equation}
\label{OpIV}
	\mathfrak{Sp}_2 (g+4, n)\supset \mathfrak{Sp}_2(g, n)+6.
\end{equation}

\begin{figure}[htbp]
\centering
	\includegraphics[width=12cm]{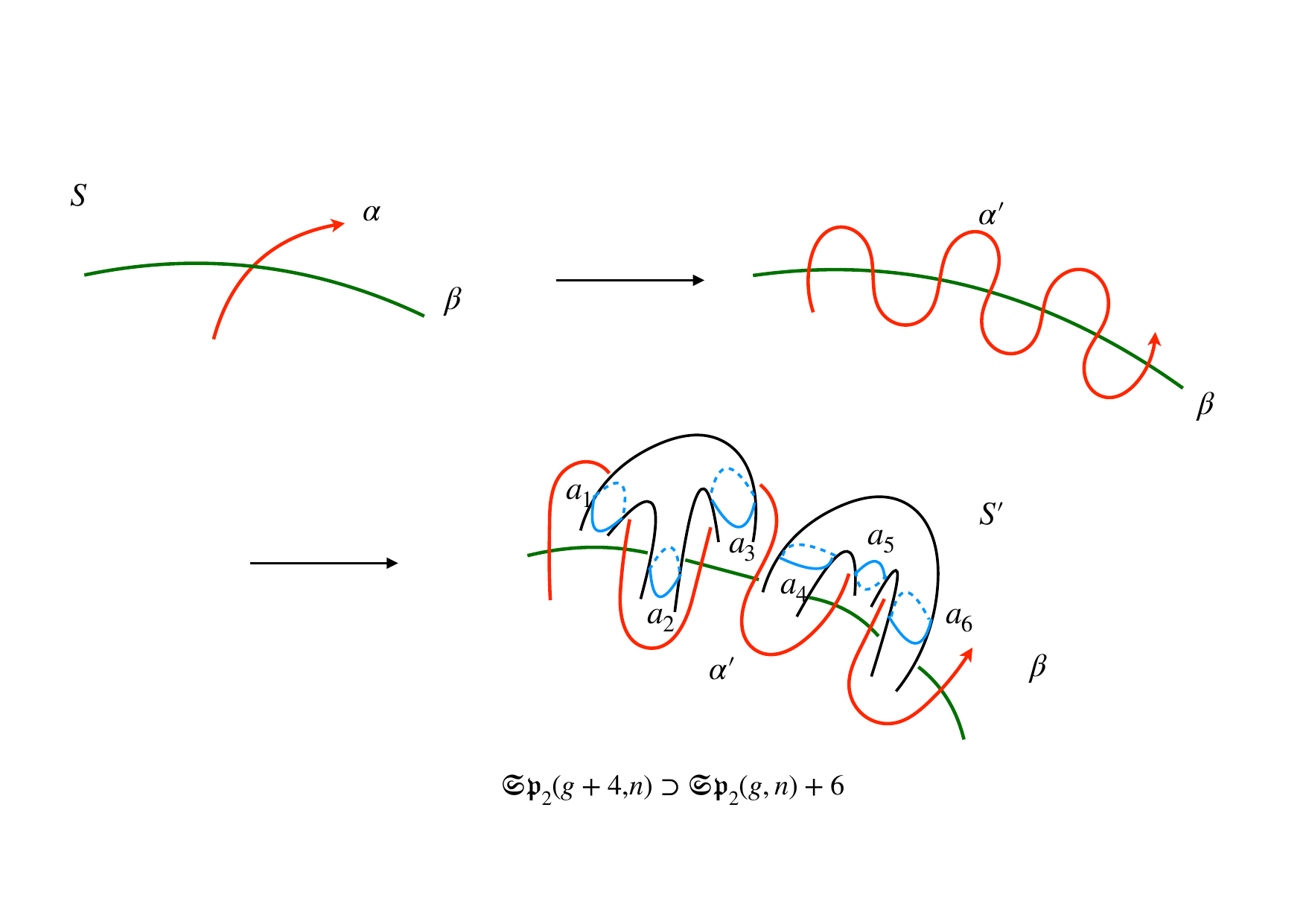}
	\caption{Operation IV}
\label{Fig. Operation IV}
\end{figure}
\end{description}

\subsection{An estimate of the spectrum}

Using the operations above, we show the following.

\begin{lemma}
\label{FirstEstimate}
For $g\geq 2$ and $n\geq 0$,
\begin{equation}
	\mathfrak{Sp}_2 (g, n)\supset \left \{1, 2, \dots , g\right \}.
\end{equation}	
\end{lemma}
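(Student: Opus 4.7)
The plan is to induct on $g\geq 2$, with the inductive step driven entirely by Operation~II. Suppose $\{1,2,\dots,g\}\subset \mathfrak{Sp}_2(g,n)$ for every $n\geq 0$. Then inequality \eqref{OpII} yields
\[
\mathfrak{Sp}_2(g+1,n)\supset \mathfrak{Sp}_2(g,n)+1\supset \{2,3,\dots,g+1\},
\]
while Theorem~\ref{thm:IJK} supplies $1\in \mathfrak{Sp}_2(g+1,n)$. Together these give $\{1,2,\dots,g+1\}\subset \mathfrak{Sp}_2(g+1,n)$, closing the induction.

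It therefore remains to verify the base case $g=2$: one needs $\{1,2\}\subset \mathfrak{Sp}_2(2,n)$ for every $n\geq 0$. The element $1$ is furnished by Theorem~\ref{thm:IJK} regardless of $n$, and for $n\geq 1$ the element $2$ is exactly the content of Theorem~\ref{thm:O}(1). The genuinely delicate case is $(g,n)=(2,0)$: since none of Operations~I--IV ever decreases the number of punctures, the closed genus two surface cannot be reached from a simpler non-sporadic surface using our operations, and Theorem~\ref{thm:O} does not cover $n=0$. I would handle this case by a direct construction on $\Sigma_2$ of a filling pair $\alpha,\beta$ of essential simple closed curves with $d_{\mathcal C(\Sigma_2)}(\alpha,\beta)=2$ whose common set of disjoint isotopy classes consists of exactly two curves, exploiting, for instance, hyperelliptic symmetry so that the complement of $\alpha\cup\beta$ offers precisely two ``waist-like'' essential companions.

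The main obstacle is therefore this closed genus two base case: everything else in the argument is a one-line induction. An alternative organization would be to establish the closed genus two portion of Theorem~\ref{ThmI}(4) first and then feed it into the lemma; under either route the essential difficulty is concentrated in producing the required pair on $\Sigma_2$, since the operations defined in Section~5 are one-way (they only add genus or punctures) and cannot generate $(2,0)$ from the spectrum data of any admissible lower-complexity surface.
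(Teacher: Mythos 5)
Your overall strategy is sound and very close to the paper's: both arguments run an induction on the genus powered by Operation II (inclusion (\ref{OpII})) together with $1\in\mathfrak{Sp}_2$ from Theorem \ref{thm:IJK}, and both bottom out in an explicit example on the closed surface of type $(2,0)$. The organization differs in how the punctures are handled: the paper runs the genus induction only for $n=0$ and $n=1$ (seeded by explicit figures for $(2,0)$ and $(2,1)$) and then reaches general $n$ by adding pairs of punctures via Operation I and (\ref{OpI-0}), whereas you run the induction uniformly in $n$ and seed the base case $g=2$, $n\geq 1$ by citing Theorem \ref{thm:O}(1); this lets you dispense with Operation I and the $(2,1)$ example entirely, which is a legitimate simplification. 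Two caveats. First, your description of the $(2,0)$ example is internally inconsistent: you ask for a \emph{filling} pair $\alpha,\beta$ with $d_{\mathcal C}(\alpha,\beta)=2$, but by Proposition \ref{Pro:filling} a filling pair has distance at least $3$; what is actually needed is an intersecting, non-filling pair whose complement carries exactly two isotopy classes of essential curves (so that there are exactly two geodesics of length $2$ between them). Second, that example is the one irreducible input of the whole lemma, and you only sketch it; the paper supplies it by an explicit picture (Figure \ref{(2,0)}), and your argument is not complete until such a configuration on the closed genus-two surface is actually exhibited and the count of its essential complementary curves verified.
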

\begin{proof}
	From Theorem \ref{thm:IJK}, we know that $\mathfrak{Sp}_2(g, n)\ni 1$.
	In fact, Figure \ref{(2,0)} shows that $\mathfrak{Sp}_2(2, 0)\supset\{1, 2\}$. 
	
	\begin{figure}[htbp]
	\centering
	\includegraphics[width=11cm]{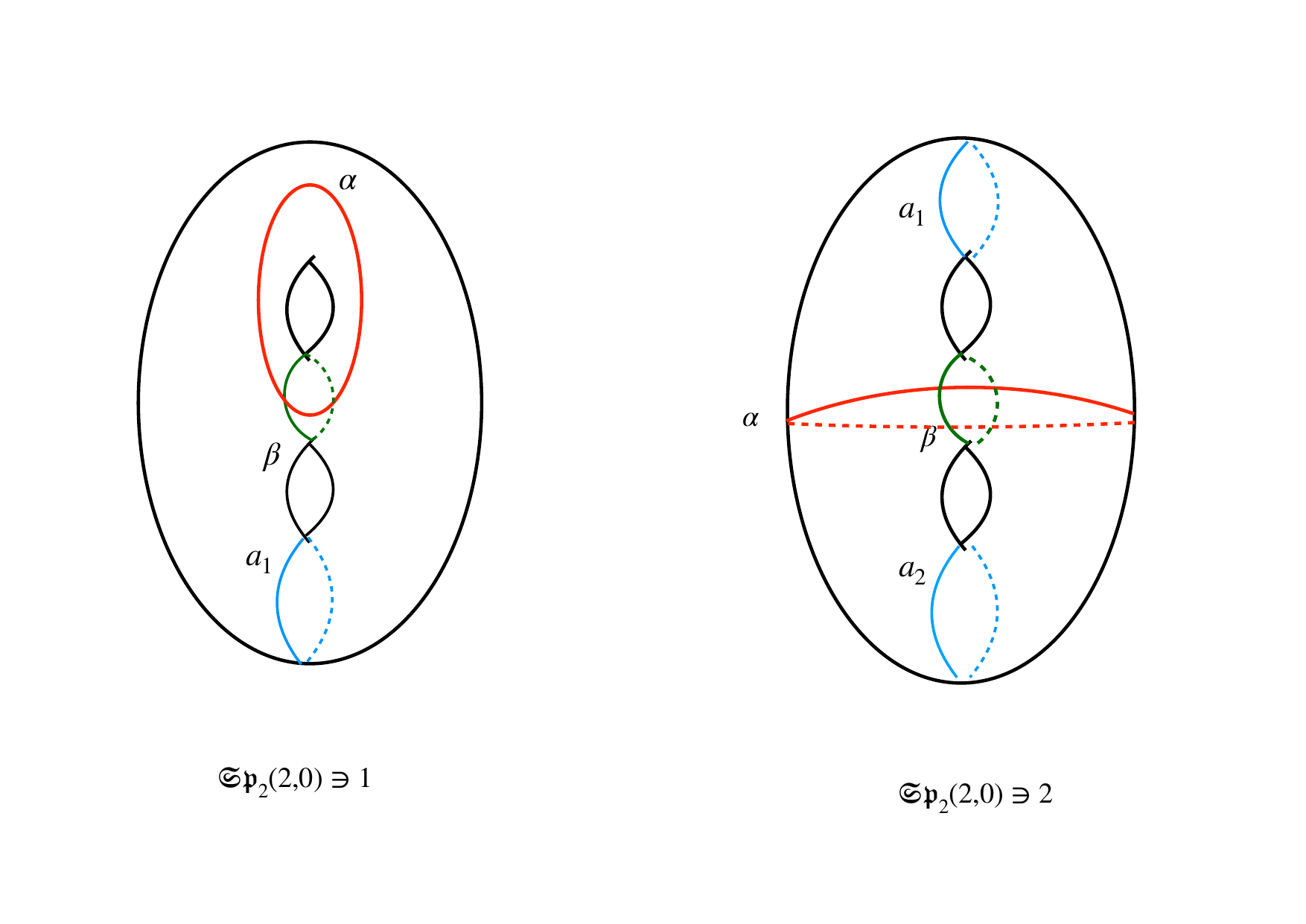}
	\caption{The (2, 0) spectrum}
	\label{(2,0)}
\end{figure}
	Hence, we have $\mathfrak{Sp}_2(3, 0)\supset \{1, 2, 3\}$ from (\ref{OpII}) of Operation II.
	Applying Operation II for surfaces of type $(3, 0)$ again, we obtain $\mathfrak{Sp}_2(4, 0)\supset\{1, 2, 3, 4\}$.
	Repeating this argument, we obtain $\mathfrak{Sp}_2 (g, 0)\supset \{1, \dots , g\}$.
	The statement is true for $n=0$ and $g\geq 2$.


	We consider the case where $n\geq 1$.
	We also use the same argument as above.
	Figure \ref{Fig.(2,1)} shows that $\mathfrak{Sp}_2(2, 1)\supset\{1, 2, 3\}$.
	
	\begin{figure}[htbp]
		\centering
		\includegraphics[width=12cm]{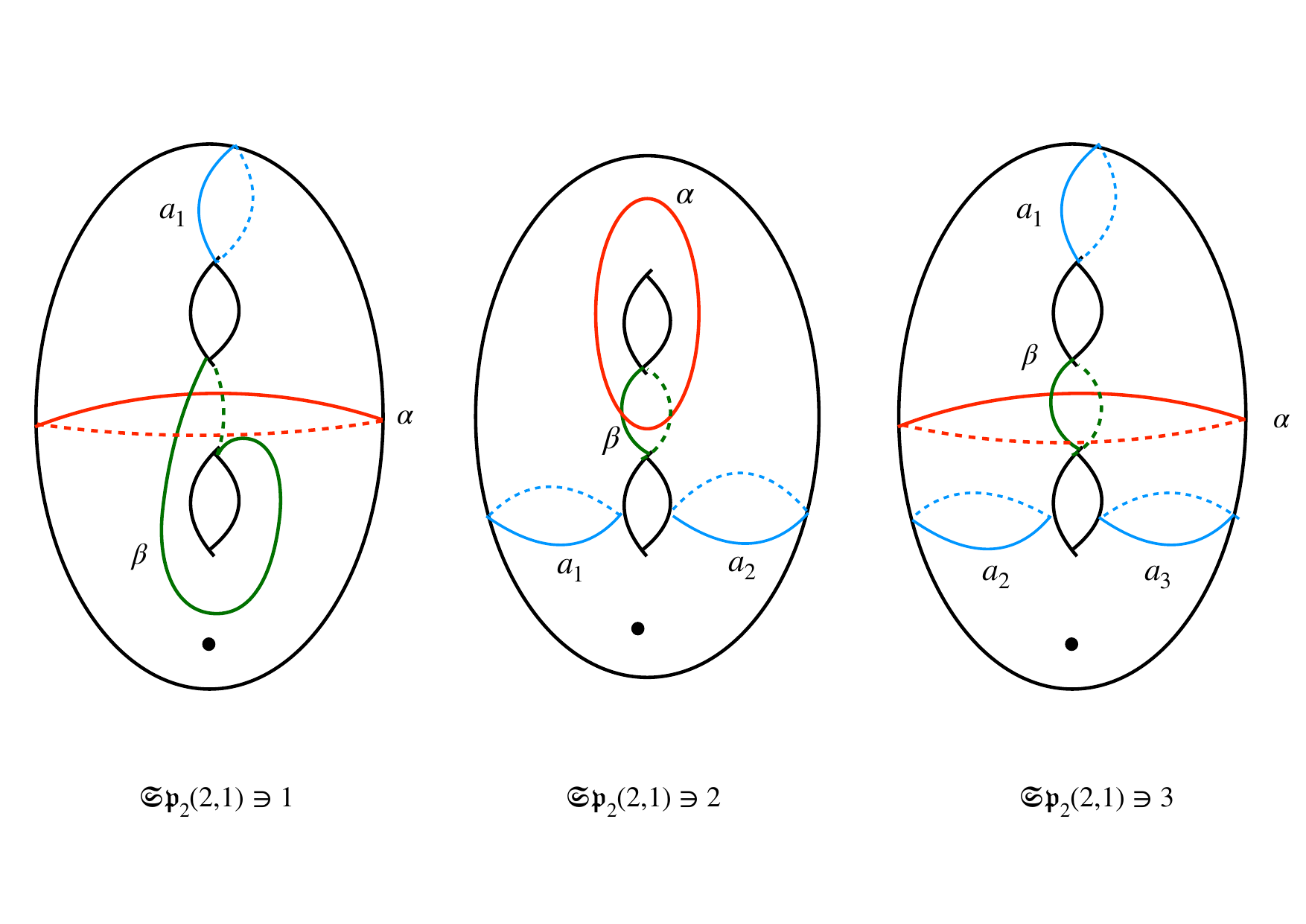}
		\caption{The $(2, 1)$ spectrum}
		\label{Fig.(2,1)}
	\end{figure}

	Hence, we have $\mathfrak{Sp}_2(3, 1)\supset\{1, 2, 3, 4\}\supset\{1, 2, 3\}$ from Operation II.
	Using Operation II repeatedly as above, we have $\mathfrak{Sp}_2(g, 1)\supset\{1, 2, \dots , g+1 \}\supset\{1, \dots , g\}$.
	Thus, the statement is true for $n=1$.
	
	
	Now, we use other operations to show surfaces of $n\geq 2$.
	We consider the following two cases to the  lemma.
	\begin{enumerate}
		\item $n=2m$ $(m\in \mathbb N)$: 
We begin with a surface $S'$ of type $(g, 0)$. From the above argument, we see that $\mathfrak{Sp}_2(S')\supset\{1, \dots , g\}$.
We construct $S$ of type $(g, n)$ by adding $n=2m$ punctures to $S'$.
Viewing (\ref{OpI-0}) of Operation I and applying Operation I by $m$ times, we verify that
\begin{equation*}
	\mathfrak{Sp}_2(S)\supset\mathfrak{Sp}_2(S').
\end{equation*}
Therefore, we see
\begin{equation*}
	\mathfrak{Sp}_2(g, n)\supset\{1, 2, \dots , g \}.
\end{equation*}
\item $n=2m+1$ $(m\in \mathbb N)$: 
We start with a surface $\dot S$ of type $(g, 1)$. We know that $\mathfrak{Sp}_2(\dot S)\supset\{1, \dots , g\}$.
We construct $S$ of type $(g, n)$ by adding $n-1=2m$ punctures to $\dot{S}$ and apply Operation I by $m$ times as (1). Just by the same reason as (1), we have
\begin{equation*}
	\mathfrak{Sp}_2(g, n)\supset\{1, 2, \dots , g \}.
\end{equation*}
	\end{enumerate}
	We have completed the proof.	
\end{proof}
\section{Proof of Theorem \ref{ThmI} (Part II)}
In this section, we show the statements of (2), (3) and (4) of Theorem \ref{ThmI}.
\begin{description}
	\item [Proof of (2)] Figure \ref{Fig.(0,5)} shows that $\mathfrak{Sp}_2(0, 5)\ni 1$.

\begin{figure}[htbp]
	\centering
	\includegraphics[width=11cm]{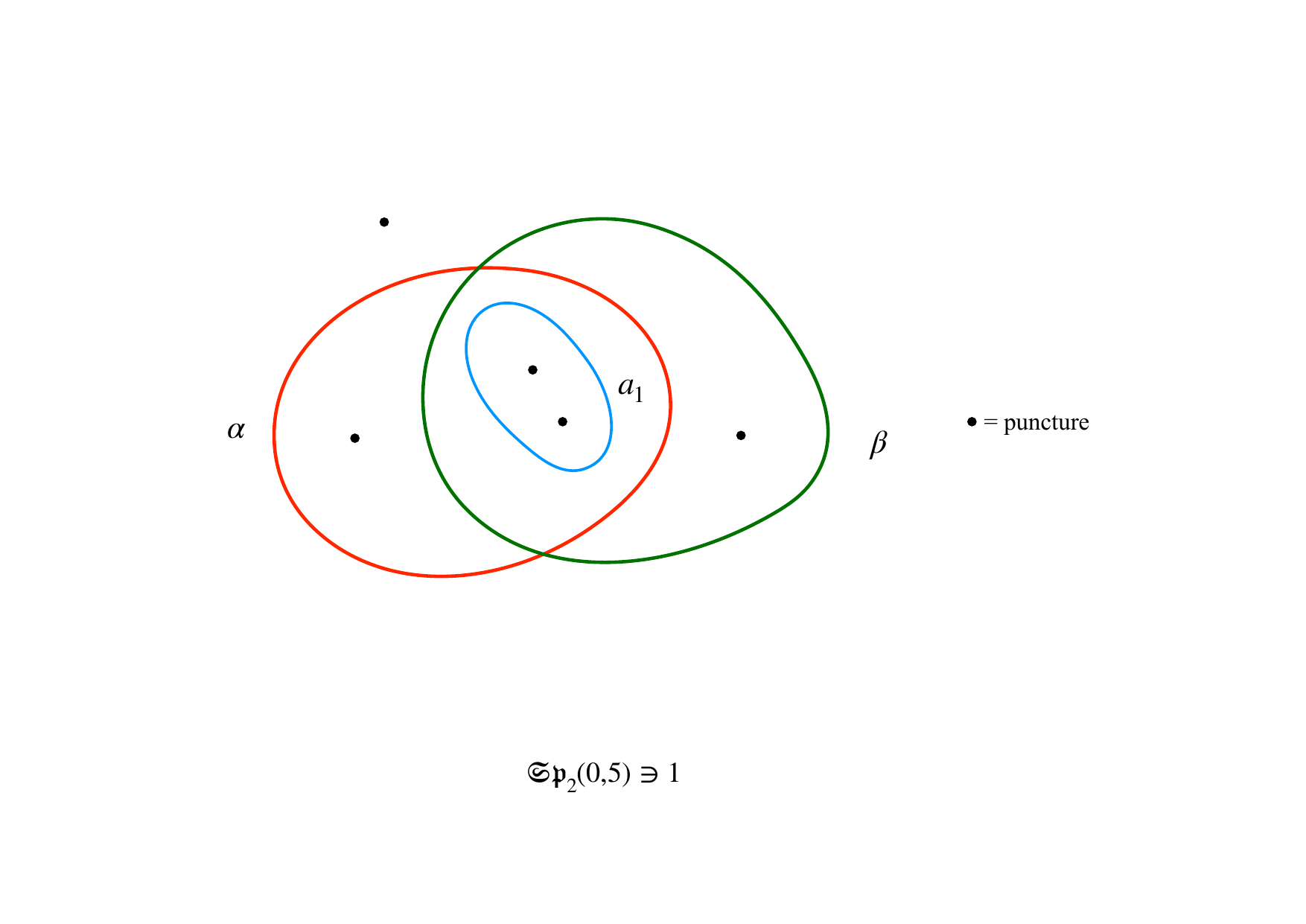}
	\caption{$(0, 5)$ spectrum}
	\label{Fig.(0,5)}
\end{figure}

\medskip

It follows from Proposition \ref{3g-4+n} that $|G_S(\alpha, \beta)|\leq M_2(g, n):=3g-4+n$ for a surface $S$ of type $(g, n)$.
Since $m(0, 5)=1$, we see that $\mathfrak{Sp}_2(0, 5)=\{1\}$.

We also see that $\mathfrak{Sp}_2(0, 6)\supset\{1, 2\}$ (Figure \ref{Fig.(0,6)}), and $m(0, 6)=2$.
\begin{figure}[htbp]
	\centering
	\includegraphics[width=11cm]{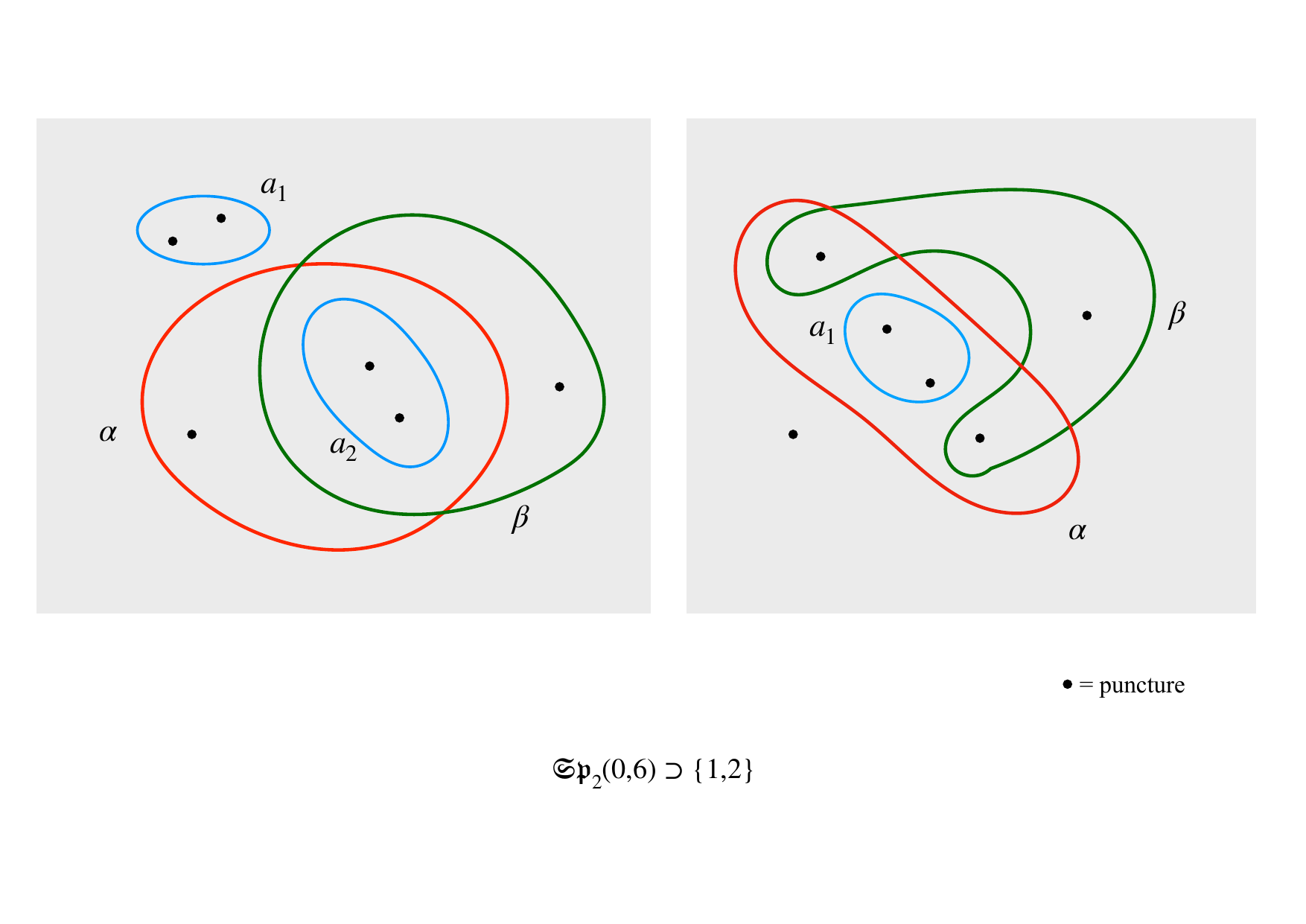}
	\caption{$(0,6)$ spectrum}
	\label{Fig.(0,6)}
\end{figure}
Hence, we conclude that $\mathfrak{Sp}_2(0, 6)=\{1, 2\}$.


\item [Proof of (3)] We see that $\mathfrak{Sp}_2(1, 2)\ni 1$ (Figure \ref{Fig.(1,2)}).
Since $m(1, 2)=1$, we see that $\mathfrak{Sp}_2(1, 2)=\{1\}$.

From Figure \ref{Fig.(1,3)}, we see that $\mathfrak{Sp}_2(1, 3)\supset\{1, 2\}$ and $m(1, 3)=2$.
Thus, we have $\mathfrak{Sp}_2(1, 3)=\{1, 2\}$.


\begin{figure}[htbp]
	\centering
	\includegraphics[width=12cm]{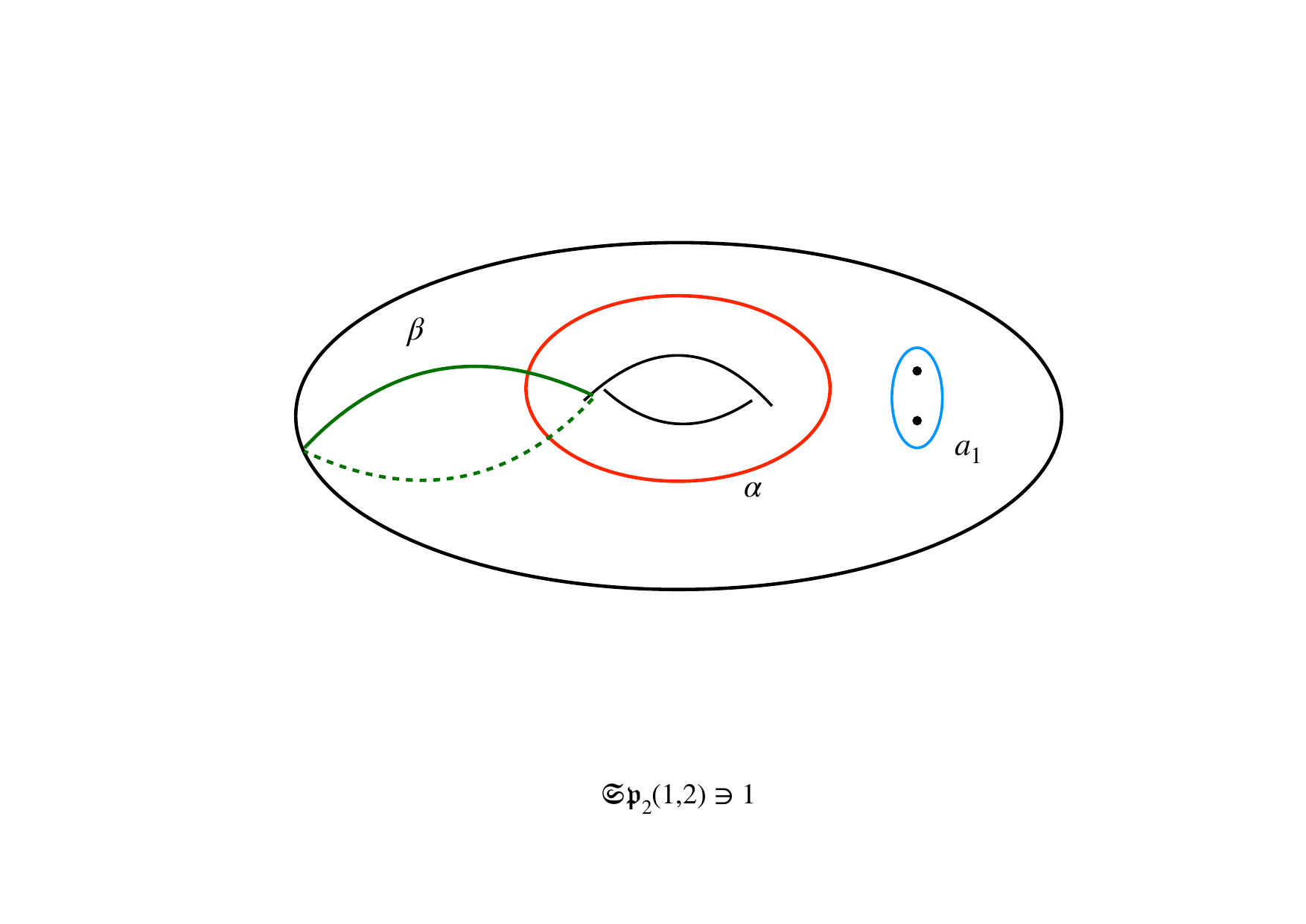}
	\caption{A surface of $(1, 2)$ type}
	\label{Fig.(1,2)}
\end{figure}
\begin{figure}[htbp]
	\centering
	\includegraphics[width=12cm]{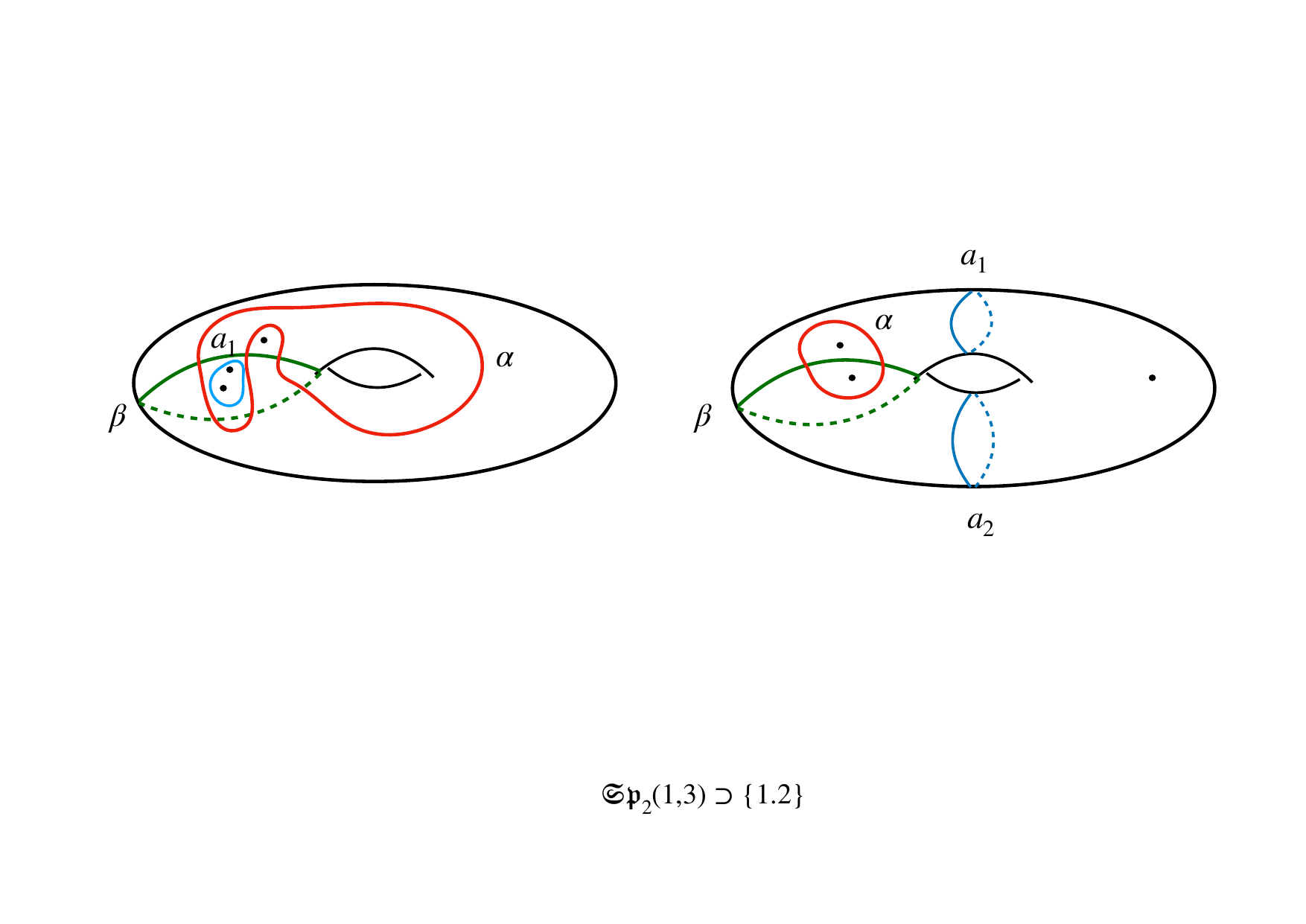}
	\caption{The $(1, 3)$ spectrum}
	\label{Fig.(1,3)}
\end{figure}

\item [Proof of (4)] We have already seen in Figure \ref{(2,0)} that $\mathfrak{Sp}_2(2, 0)\supset\{1, 2\}$, and $m(2, 0)=2$.
Hence, we conclude that $\mathfrak{Sp}_2(2, 0)=\{1, 2\}$.
\end{description}

It is easy to see that all geodesics given in this section are tight geodesics.
Hence, we verify that $\mathfrak{Sp}_2(g, n)=\mathfrak{Sp}_2^T(g, n)$ always holds.

\section{Constructive Approach (an alternative proof of Theorem \ref{ThmI})}

In this section, we will show that the statement (1) of Theorem \ref{ThmI} by using constructive methods.
More precisely, for a given $k\in \{1, 2, \dots , M_2(g, n)\}$, we exhibit a construction of $\alpha, \beta\in \mathcal C(S)$ with $|\mathcal G_S (\alpha, \beta)|=k$.
In the construction, we use the basic operations of \S 4.
Here, the statements are repeated while the method of the proofs are different from those in \S4.

\medskip

We begin with the planar case.
\begin{lemma}
\label{planar Lemma}
	If $n\geq 7$, then
	\begin{equation}
	\label{Lemma2}
		\mathfrak{Sp}_2(0, n)=\left \{1, 2, \dots , \left \lfloor \frac{n}{2}\right \rfloor\right \}.
	\end{equation}
\end{lemma}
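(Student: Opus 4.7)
The upper bound $\mathfrak{Sp}_2(0,n)\subset\{1,\dots,\lfloor n/2\rfloor\}$ is already provided by the argument of \S4 specialised to $g=0$, so my plan is to concentrate on the reverse inclusion: realising each $k\in\{1,\dots,\lfloor n/2\rfloor\}$ as the number of distance-$2$ geodesics between some pair of curves in $\mathcal{C}(0,n)$.

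The strategy is an induction on $n$, with the base cases $\mathfrak{Sp}_2(0,5)=\{1\}$ and $\mathfrak{Sp}_2(0,6)=\{1,2\}$ from \S6 combined with direct constructions for $n=7,8$, and the inductive step carried out via Operation I from \S5. For $n\ge 9$, by \eqref{OpI-0} one has $\mathfrak{Sp}_2(0,n)\supset\mathfrak{Sp}_2(0,n-2)$, which by the induction hypothesis contains $\{1,\dots,\lfloor(n-2)/2\rfloor\}=\{1,\dots,\lfloor n/2\rfloor-1\}$; and by \eqref{OpI-2} one has $\mathfrak{Sp}_2(0,n)\supset\mathfrak{Sp}_2(0,n-4)+2$, which contains $\{3,\dots,\lfloor(n-4)/2\rfloor+2\}=\{3,\dots,\lfloor n/2\rfloor\}$ and in particular supplies the top value $\lfloor n/2\rfloor$. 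The union is $\{1,\dots,\lfloor n/2\rfloor\}$, completing the inductive step.

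For the base cases $n=7,8$, Operation I alone cannot reach every intermediate value (for instance, from $(0,5)$ we obtain only $k=1$ at $(0,7)$ via \eqref{OpI-0}, and from $(0,5),(0,6)$ we cannot produce $k=3$ at $(0,8)$). I would therefore construct the missing values directly, mirroring Case (i)/(iv) of \S4: for each prescribed $k\le\lfloor n/2\rfloor$, take $k$ pairwise disjoint twice-punctured disks $D_1,\dots,D_k$ in $S$ absorbing $2k$ of the punctures, and set $X=S\setminus\bigcup_{i=1}^{k}\overline{D_i}$. Since $n\ge 7$ and $k\le\lfloor n/2\rfloor$, $X$ is a connected planar surface with $n-k\ge\lceil n/2\rceil\ge 4$ total boundary components and punctures, hence non-simple; Proposition \ref{Pro:ExistFilling} supplies a filling pair $\{\alpha,\beta\}$ in $X$, and the curves $a_i=\partial D_i$ yield $k$ length-$2$ geodesics $[\alpha,a_i,\beta]$.

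The main obstacle I anticipate is verifying exactness, i.e.\ $|\mathcal{G}_S(\alpha,\beta)|=k$, in the base-case constructions: one must show that no essential simple closed curve on $S$ other than the $a_i$ is disjoint from both $\alpha$ and $\beta$. Curves lying inside a twice-punctured disk $D_i$ are either peripheral to a puncture (hence inessential in $S$) or isotopic to $\partial D_i=a_i$, while curves in $X$ disjoint from $\alpha\cup\beta$ are excluded by the filling property of $\{\alpha,\beta\}$. Handling this case analysis carefully, and confirming that the inductive Operation I step preserves exact counts rather than merely the $\supset$ inclusion, is the technical heart of the proof.
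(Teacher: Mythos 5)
Your overall strategy (induction on the number of punctures driven by Operation I, with small base cases settled by hand) is the same as the paper's, but the way you have organized the induction leaves a genuine gap at $n=9$ and $n=10$. Your step producing the top value reads $\mathfrak{Sp}_2(0,n)\supset\mathfrak{Sp}_2(0,n-4)+2\supset\{3,\dots,\lfloor n/2\rfloor\}$, which implicitly uses $\mathfrak{Sp}_2(0,n-4)\supset\{1,\dots,\lfloor (n-4)/2\rfloor\}$. That containment is exactly the statement of the lemma at $n-4$, and it is \emph{false} for $n-4=5,6$: one has $\mathfrak{Sp}_2(0,5)=\{1\}$ (not $\{1,2\}$) and $\mathfrak{Sp}_2(0,6)=\{1,2\}$ (not $\{1,2,3\}$), because the formula $\lfloor n/2\rfloor$ only kicks in at $n=7$ (below that the maximum is $3g-4+n=n-4$). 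Concretely, your argument yields $\mathfrak{Sp}_2(0,9)\supset\{1,2,3\}\cup(\{1\}+2)=\{1,2,3\}$ and $\mathfrak{Sp}_2(0,10)\supset\{1,2,3,4\}\cup(\{1,2\}+2)=\{1,2,3,4\}$, so the values $4=\lfloor 9/2\rfloor$ and $5=\lfloor 10/2\rfloor$ are never produced. The paper avoids this by running the induction in steps of four and taking $n=7,8,9,10$ as base cases, exhibiting the missing top values at $n=9,10$ by explicit figures; you would need to either enlarge your list of base cases to $n=7,8,9,10$ or restrict the $n-4$ step to $n\geq 11$ and handle $9,10$ separately.

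That said, the direct construction you describe for the base cases (take $k$ disjoint twice-punctured disks $D_1,\dots,D_k$, let $X$ be the complementary planar piece, and choose a filling pair $\{\alpha,\beta\}$ in $X$ via Proposition \ref{Pro:ExistFilling}) is not specific to $n=7,8$: it works verbatim for every $n\geq 7$ and every $k\leq\lfloor n/2\rfloor$, since $X$ then has $n-k\geq\lceil n/2\rceil\geq 4$ boundary components and punctures and is therefore non-simple. If you apply it in that generality it closes the $n=9,10$ gap immediately — and in fact renders the induction unnecessary. But be aware that this is precisely the argument of \S 4 of the paper (the decomposition into simple pieces plus a filling pair in the remaining component), whereas \S 7 is expressly meant to be an \emph{alternative}, purely operation-based proof; so as a reconstruction of the paper's constructive proof it misses the point, even though it is mathematically sound. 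Your discussion of exactness (curves inside a $D_i$ are peripheral or isotopic to $\partial D_i$; curves meeting $X$ essentially are excluded by filling) is the right case analysis and is no less careful than what the paper itself records.
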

\begin{proof}
	We show the lemma by using an inductive argument.
	Suppose that (\ref{Lemma2}) is valid for some $n\geq 7$.
	Then, by (\ref{OpI-0}) we have
	\begin{equation*}
		\mathfrak{Sp}_2(0, n+4)\supset\mathfrak{Sp}_2(0, n)=\left \{1, 2, \dots , \left \lfloor \frac{n}{2}\right \rfloor\right \}.
	\end{equation*}
	By (\ref{OpI-2}), we also have
	\begin{eqnarray*}
		\mathfrak{Sp}_2(0, n+4)&\supset &\mathfrak{Sp}_2(0, n)+2 \\
		&\supset &\left \{3, \dots , \left \lfloor \frac{n}{2}+2\right \rfloor\right \}=\left \{3,  \dots , \left \lfloor \frac{n+4}{2}\right \rfloor\right \}.
	\end{eqnarray*}
	Since $\lfloor \frac{n}{2}\rfloor\geq 3$, we verify that
	\begin{eqnarray*}
		\mathfrak{Sp}_2(0, n+4)&\supset&\mathfrak{Sp}_2(0, n)\cup(\mathfrak{Sp}_2(0, n)+2) \\
		&\supset &\left \{1, 2, \dots , \left \lfloor \frac{n+4}{2}\right \rfloor\right \}.
	\end{eqnarray*}
	We have already known that $\lfloor\frac{n+4}{2}\rfloor=M_2(0, n+4)$.
	Therefore, (\ref{Lemma2}) is valid for $n+4$ if it is valid for $n\geq 7$.
	
	From this argument, we just show that (\ref{Lemma2}) holds for $n=7, 8, 9, 10$ to show it for any $n\geq 7$.
	We show such cases by constructing concrete examples.
	\begin{itemize}
		\item $n=7$: Since $\mathfrak{Sp}_2(0, 5)=\{1\}$, it follows from (\ref{OpI-0}) that $\mathfrak{Sp}_2(0, 7)\ni 1$.
		The pictures of the left side in Figure \ref{Fig.7,8} show that $\mathfrak{Sp}_2(0, 7)\ni 2, 3$.
\begin{figure}[htbp]
	\centering
	\includegraphics[width=12cm]{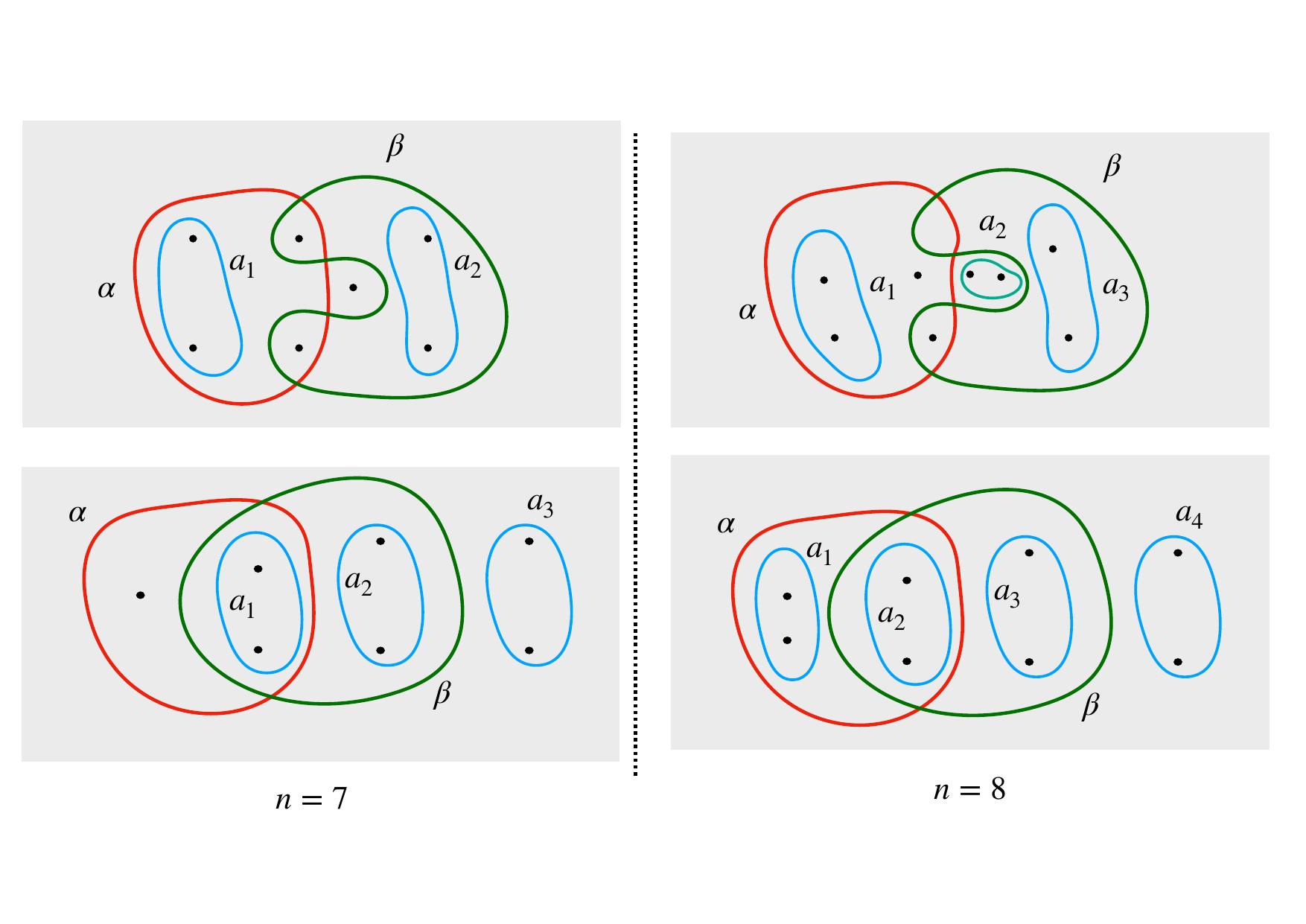}
	\caption{}
	\label{Fig.7,8}
\end{figure}
		We verify that
		\begin{equation*}
			\mathfrak{Sp}_2(0, 7)=\{1, 2, 3\}= \left \{1, 2,  \left \lfloor \frac{7}{2}\right \rfloor\right \}.
		\end{equation*}
		\item $n=8$: Since $\mathfrak{Sp}_2(0, 6)=\{1, 2\}$, it follows from (\ref{OpI-0}) that $\mathfrak{Sp}_2(0, 8)\supset \{1, 2\}$.
The picture of the right side in Figure \ref{Fig.7,8} show that $\mathfrak{Sp}_2(0, 8)\ni 3, 4$.
We verify that
		\begin{equation*}
			\mathfrak{Sp}_2(0, 8)=\{1, 2, 3, 4\}= \left \{1, 2, 3, \left \lfloor \frac{8}{2}\right \rfloor\right \}.
		\end{equation*}
		\item $n=9$: Since $\mathfrak{Sp}_2(0, 7)=\{1, 2, 3\}$, it follows from (\ref{OpI-0}) that $\mathfrak{Sp}_2(0, 9)\supset \{1, 2, 3\}$.
The picture of the left side in Figure \ref{Fig.9,10} shows that $\mathfrak{Sp}_2(0, 9)\ni 4$.
\begin{figure}[htbp]
	\centering
	\includegraphics[width=12cm]{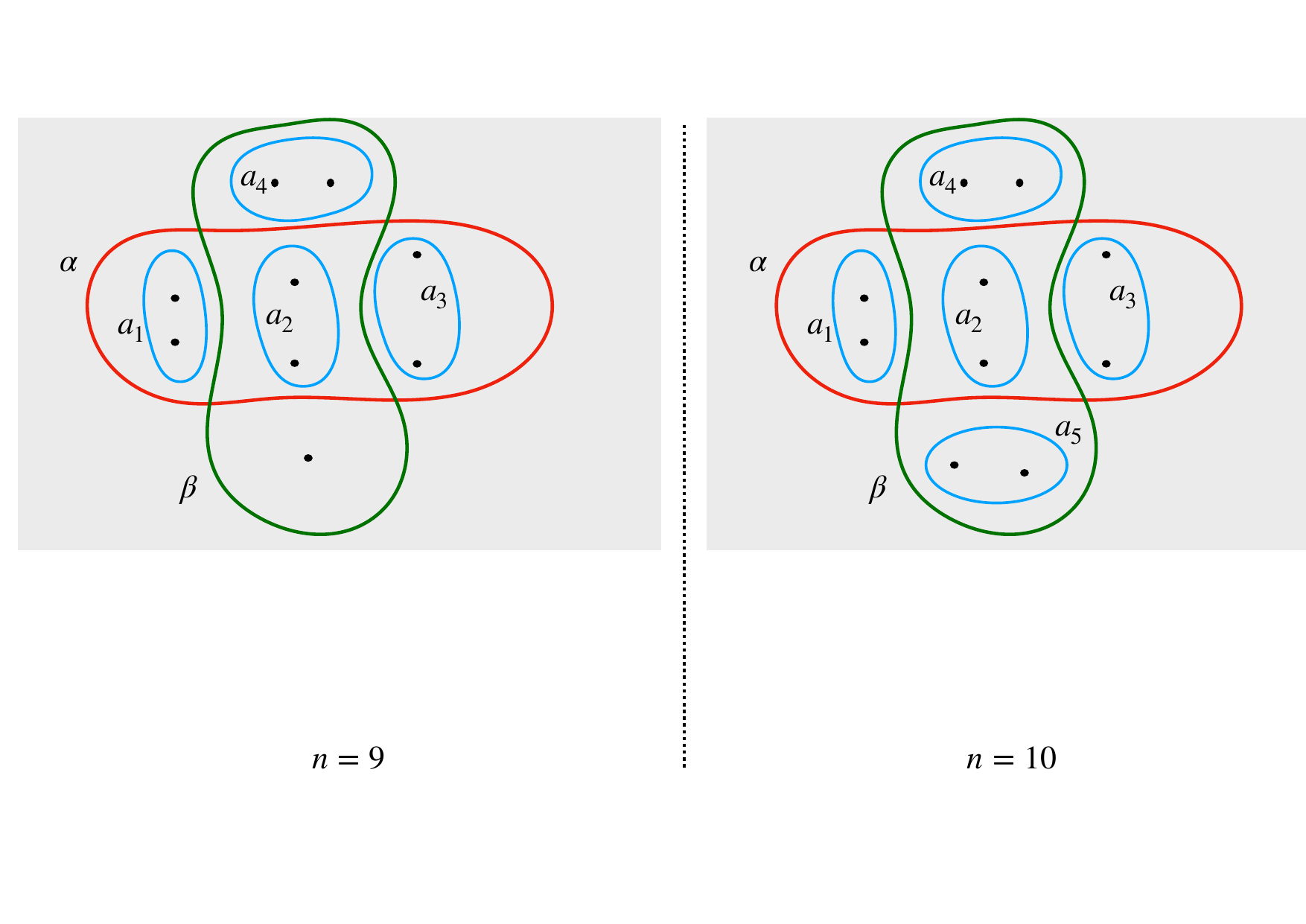}
	\caption{}
	\label{Fig.9,10}
\end{figure}

We verify that
		\begin{equation*}
			\mathfrak{Sp}_2(0, 9)=\{1, 2, 3, 4\}= \left \{1, 2, 3, \left \lfloor \frac{9}{2}\right \rfloor\right \}.
		\end{equation*}
		\item $n=10$: Since $\mathfrak{Sp}_2(0, 8)=\{1, 2, 3, 4\}$, it follows from (\ref{OpI-0}) that $\mathfrak{Sp}_2(0, 10)\supset \{1, 2, 3, 4\}$.
The  picture of the right side in Figure \ref{Fig.9,10} shows that $\mathfrak{Sp}_2(0, 10)\ni 5$.
We verify that
		\begin{equation*}
			\mathfrak{Sp}_2(0, 10)=\{1, 2, 3, 4, 5\}= \left \{1, 2, 3, 4, \left \lfloor \frac{10}{2}\right \rfloor\right \}.
		\end{equation*}
	\end{itemize}
	The proof of the lemma is completed.
\end{proof}

Next, we consider the case where $g=1$.
\begin{lemma}
\label{torus Lemma}
	If $n\geq 4$, then
	\begin{equation}
	\label{Lemma3}
		\mathfrak{Sp}_2(1, n)=\left \{1, 2, \dots , \left \lfloor \frac{3+n}{2}\right \rfloor\right \}.
	\end{equation}
\end{lemma}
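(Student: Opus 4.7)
The upper bound $\mathfrak{Sp}_2(1, n) \subset \{1, 2, \dots, \lfloor (3+n)/2 \rfloor\}$ is already available from Proposition \ref{3g-4+n} together with the bound $M_2^T(g, n) \leq \lfloor (3g+n)/2 \rfloor$ established in Section~4 (see Remark \ref{Remark}), so only the reverse inclusion requires proof. The plan is to mirror the structure of Lemma \ref{planar Lemma} exactly: induct on $n$ in steps of four starting from four consecutive base cases.

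For the inductive step, assuming (\ref{Lemma3}) holds for some $n \geq 4$, I would combine (\ref{OpI-0}) and (\ref{OpI-2}) of Operation~I to obtain
$$\mathfrak{Sp}_2(1, n+4) \supset \mathfrak{Sp}_2(1, n) \cup \bigl(\mathfrak{Sp}_2(1, n) + 2\bigr).$$
The first summand contributes $\{1, 2, \dots, \lfloor (3+n)/2 \rfloor\}$, the second contributes $\{3, 4, \dots, \lfloor (3+n)/2 \rfloor + 2\} = \{3, \dots, \lfloor (7+n)/2 \rfloor\}$, and because $\lfloor (3+n)/2 \rfloor \geq 3$ whenever $n \geq 3$, the two overlap enough to cover $\{1, 2, \dots, \lfloor (3 + (n+4))/2 \rfloor\}$. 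This step is essentially verbatim the one in Lemma \ref{planar Lemma}.

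The base cases are $n = 4, 5, 6, 7$. For each I would first exhaust the values obtainable by operation-based reductions from previously settled spectra: Operation~I (\ref{OpI-0}) propagates $\mathfrak{Sp}_2(1, 2) = \{1\}$ and $\mathfrak{Sp}_2(1, 3) = \{1, 2\}$ from Theorem \ref{ThmI}(3) to $\mathfrak{Sp}_2(1, 4)$ and $\mathfrak{Sp}_2(1, 5)$, Operation~I (\ref{OpI-1}) and (\ref{OpI-2}) inject shifted copies of $\mathfrak{Sp}_2(1, 2)$ and $\mathfrak{Sp}_2(1, 3)$ into $\mathfrak{Sp}_2(1, 5)$, $\mathfrak{Sp}_2(1, 6)$ and $\mathfrak{Sp}_2(1, 7)$, Operation~II applied to $\mathfrak{Sp}_2(0, n)$ (from Lemma \ref{planar Lemma} and Theorem \ref{ThmI}(2)) contributes $\mathfrak{Sp}_2(0, n) + 1$, and Operation~III contributes $\mathfrak{Sp}_2(0, n-1) + 2$. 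In each case this machinery will cover $\{1, 2, \dots, \lfloor (3+n)/2 \rfloor\}$ except for (at most) the top value, which I would realize by drawing an explicit filling pair on a surface of type $(1, n)$, in the same spirit as the figures used in Lemma \ref{planar Lemma}.

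The main obstacle I anticipate lies in the case $n = 4$. Several natural reduction routes are blocked: Operation~II from $(0, 4)$ and Operation~III from $(0, 3)$ are unavailable because those source surfaces are sporadic or simple, and $(1, 1)$ is sporadic so one cannot feed it into (\ref{OpI-1}). Consequently the values $2$ and $3$ in $\mathfrak{Sp}_2(1, 4) = \{1, 2, 3\}$ will have to be produced by direct geometric constructions on a $(1, 4)$-surface, and some care is needed to verify that the candidate filling pair $(\alpha, \beta)$ admits exactly the desired number of common neighbors in $\mathcal{C}(S)$ and no extras. Once $n = 4$ is settled, the remaining base cases $n = 5, 6, 7$ should each require at most a single extra explicit example at the top of the spectrum.
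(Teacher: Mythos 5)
Your proposal follows essentially the same route as the paper: induction in steps of four on $n$ with base cases $n=4,5,6,7$, Operation~I propagating the known spectra of $(1,2)$ and $(1,3)$, and explicit filling pairs supplying the remaining values. The only quibble is quantitative: for $n=5$ the operation-based reductions only yield $\{1,2\}$ (Operation~III from $(0,4)$ and (\ref{OpI-2}) from $(1,1)$ are blocked by sporadicity), so both $3$ and $4$ must come from explicit constructions rather than just the top value, exactly as the paper's Figure for the $(1,5)$ spectrum provides.
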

\begin{proof}
	From the same inductive argument as in the proof of the above lemma, we see that it suffices to show the statement for $n=4, 5, 6, 7$.
	\begin{itemize}
		\item $n=4$: Since $\mathfrak{Sp}_2(1, 2)=\{1\}$, it follows from (\ref{OpI-0}) that $\mathfrak{Sp}_2(1, 4)\ni 1$. 
Figure \ref{Fig.(1,4)} shows that $\mathfrak{Sp}_2(1, 4)\ni 2, 3$.

\begin{figure}[htbp]
	\centering
	\includegraphics[width=12cm]{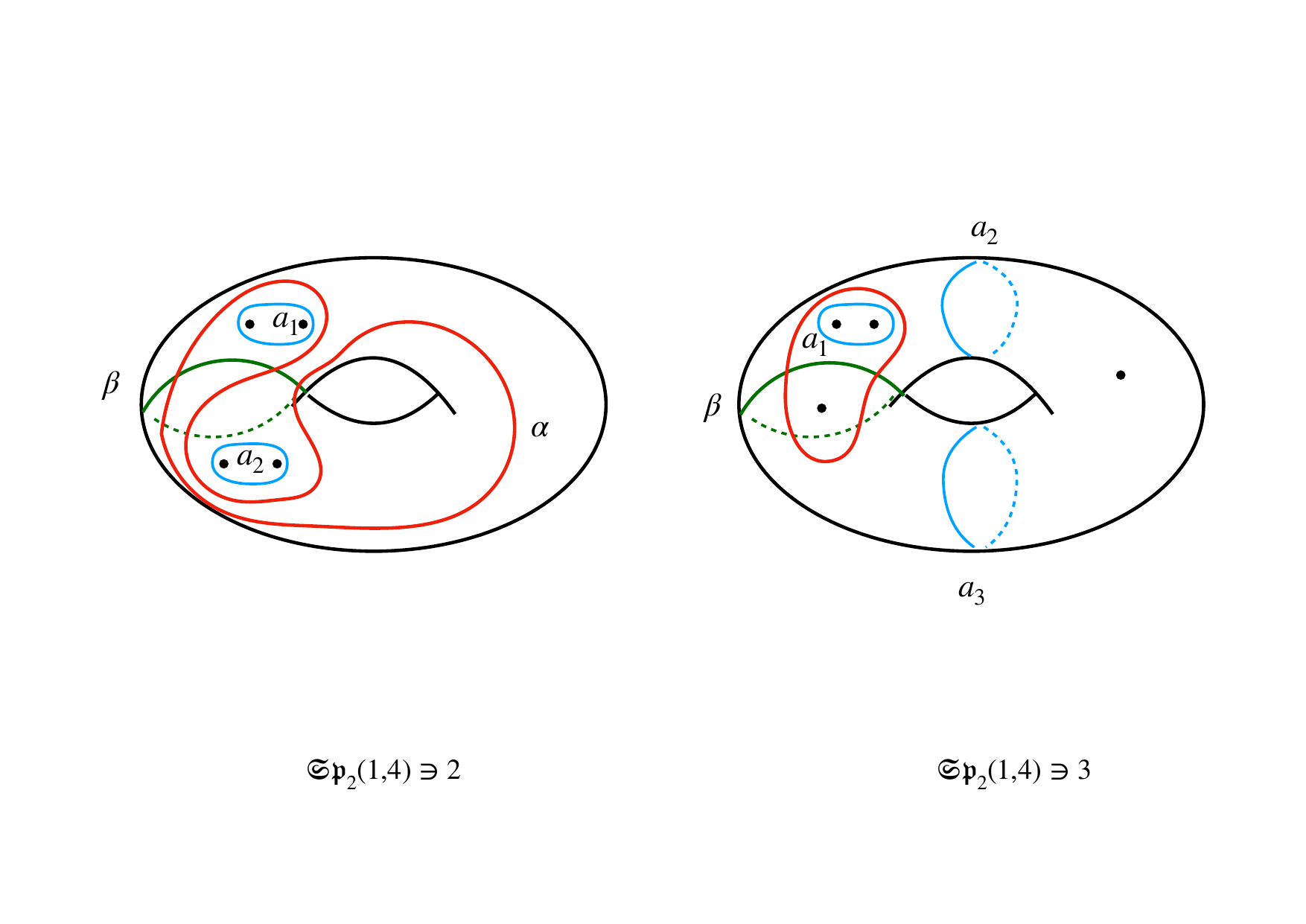}
	\caption{The $(1, 4)$ spectrum}
	\label{Fig.(1,4)}
\end{figure}

We verify that
		\begin{equation*}
			\mathfrak{Sp}_2(1, 4)=\{1, 2, 3\}= \left \{1, 2, \left \lfloor \frac{3+4}{2}\right \rfloor\right \}.
		\end{equation*}
		\item $n=5$: Since $\mathfrak{Sp}_2(1, 3)=\{1, 2\}$, it follows from (\ref{OpI-0}) that $\mathfrak{Sp}_2(1, 5)\supset \{1, 2\}$. 
Figure \ref{Fig.(1,5)} shows that $\mathfrak{Sp}_2(1, 5)\ni 3, 4$.

\begin{figure}[htbp]
	\centering
	\includegraphics[width=12cm]{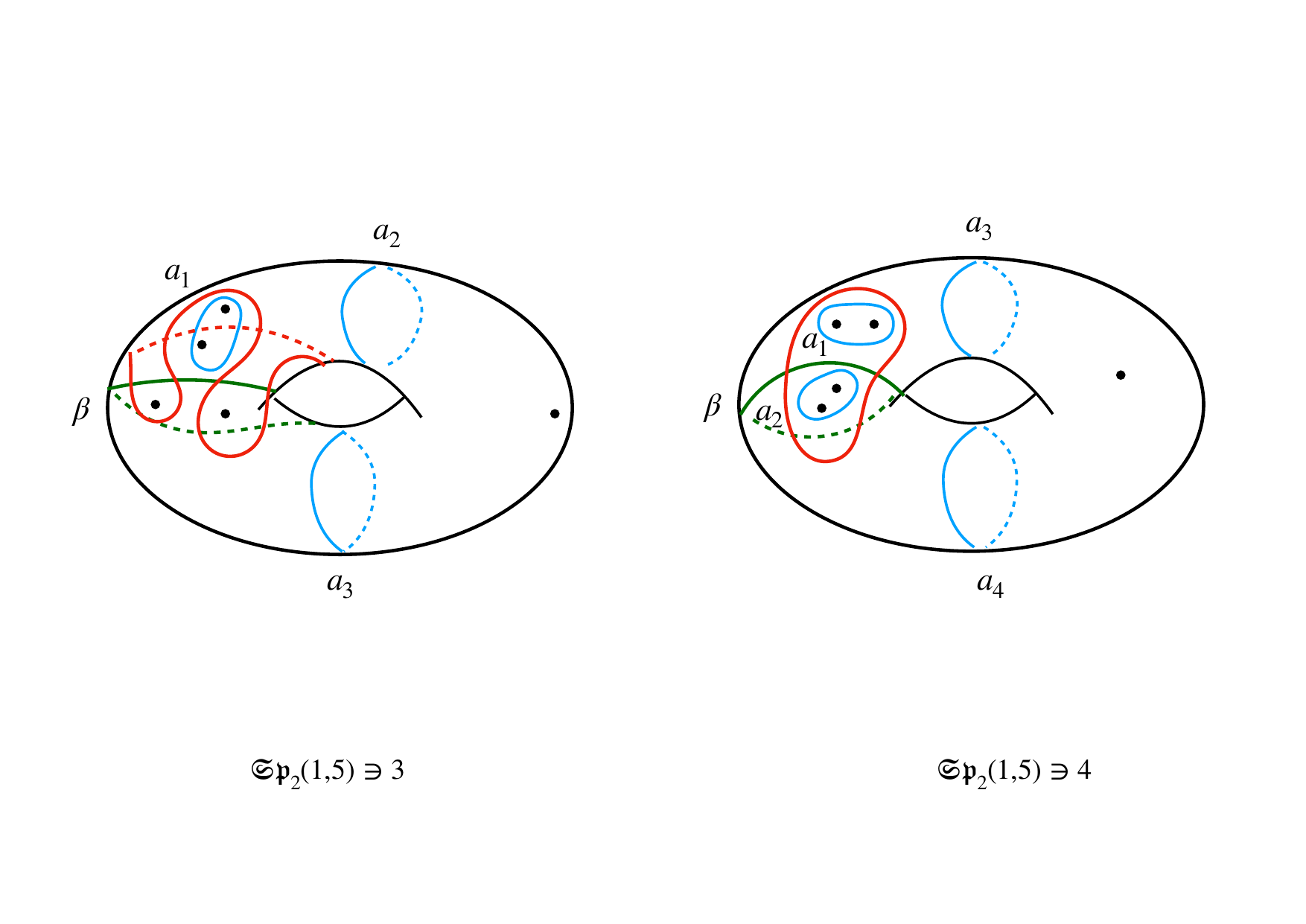}
	\caption{The $(1, 5)$ spectrum}
	\label{Fig.(1,5)}
\end{figure}

We verify that
		\begin{equation*}
			\mathfrak{Sp}_2(1, 5)=\{1, 2, 3, 4\}= \left \{1, 2, 3, \left \lfloor \frac{3+5}{2}\right \rfloor\right \}.
		\end{equation*}
		\item $n=6$: Since $\mathfrak{Sp}_2(1, 4)=\{1, 2, 3\}$, it follows from (\ref{OpI-0}) that $\mathfrak{Sp}_2(1, 6)\supset \{1, 2, 3\}$. 
The picture of the left side in Figure \ref{Fig.(1,6)} shows that $\mathfrak{Sp}_2(1, 6)\ni 4$.

\begin{figure}[htbp]
	\centering
	\includegraphics[width=12cm]{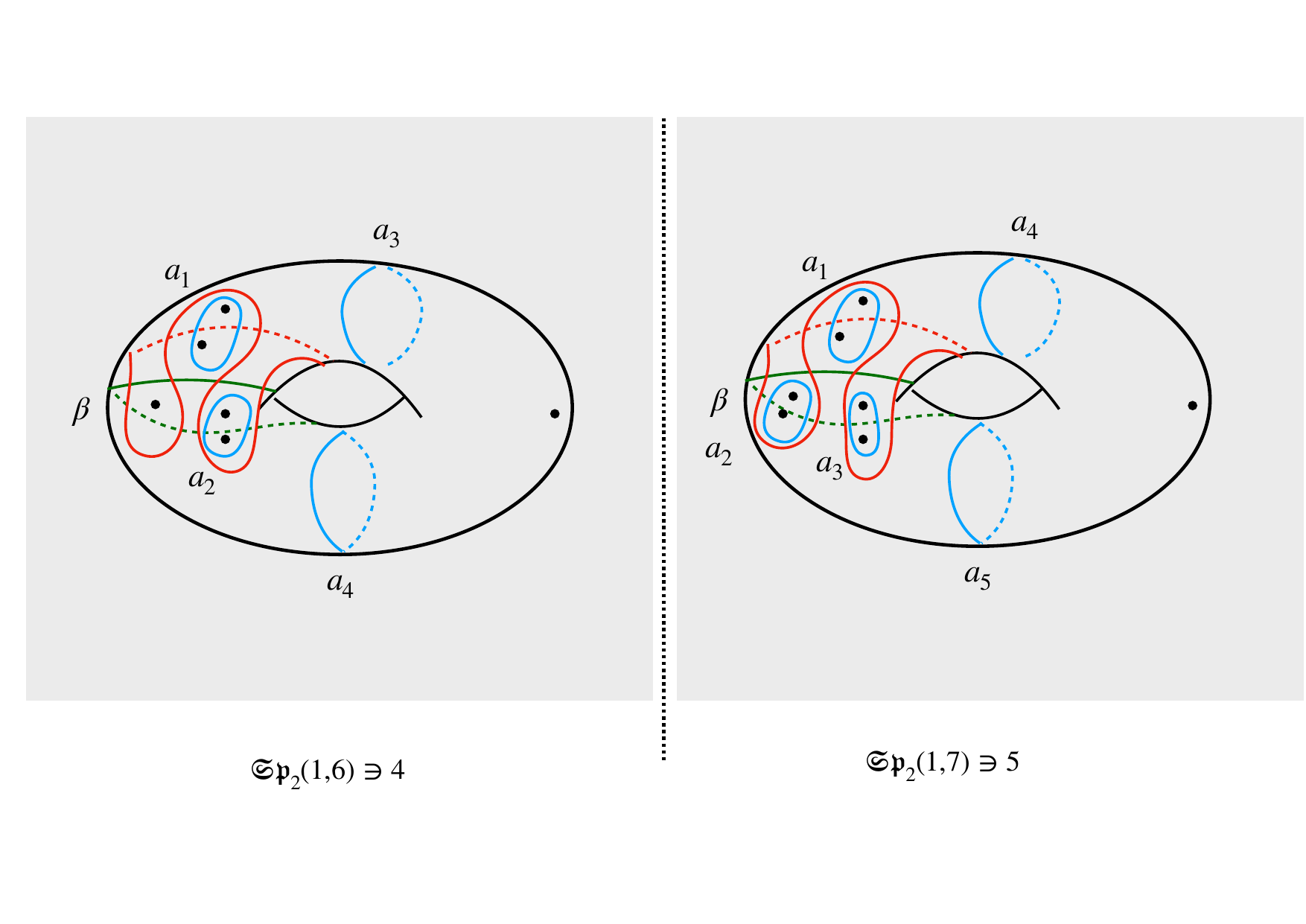}
	\caption{}
	\label{Fig.(1,6)}
\end{figure}

We verify that
		\begin{equation*}
			\mathfrak{Sp}_2(1, 6)=\{1, 2, 3, 4\}= \left \{1, 2, 3, \left \lfloor \frac{3+6}{2}\right \rfloor\right \}.
		\end{equation*}
		\item $n=7$: Since $\mathfrak{Sp}_2(1, 5)=\{1, 2, 3, 4\}$, it follows from (\ref{OpI-0}) that $\mathfrak{Sp}_2(1, 7)\supset \{1, 2, 3, 4\}$. 
The picture of the right side in Figure \ref{Fig.(1,6)} shows that $\mathfrak{Sp}_2(1, 7)\ni 5$.
We verify that
		\begin{equation*}
			\mathfrak{Sp}_2(1, 7)=\{1, 2, 3, 4, 5\}= \left \{1, 2, 3, 4, \left \lfloor \frac{3+7}{2}\right \rfloor\right \}.
		\end{equation*}
	\end{itemize}
	Thus, we have completed the proof of the lemma.
\end{proof}

We also consider the case where $g=2$.
\begin{lemma}
\label{genus2 Lemma}
	If $n\geq 1$, then
\begin{equation}
	\label{Lemma4}
		\mathfrak{Sp}_2(2, n)=\left \{1, 2, \dots , \left \lfloor \frac{6+n}{2}\right \rfloor\right \}.
	\end{equation}
\end{lemma}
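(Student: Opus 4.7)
The plan is to follow the same inductive template as Lemmas \ref{planar Lemma} and \ref{torus Lemma}. The upper bound $\mathfrak{Sp}_2(2,n)\subset\{1,\dots,\lfloor(6+n)/2\rfloor\}$ has already been established in \S 4 (the $M_2^T(g,n)=\lfloor(3g+n)/2\rfloor$ part of Theorem \ref{ThmI} (1)), so the entire task is to produce the reverse inclusion.

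For the inductive step, I would show that if (\ref{Lemma4}) holds for some $n\geq 1$, then it holds for $n+4$. By (\ref{OpI-0}) of Operation I,
\begin{equation*}
\mathfrak{Sp}_2(2,n+4)\supset\mathfrak{Sp}_2(2,n)=\left\{1,2,\dots,\left\lfloor\tfrac{6+n}{2}\right\rfloor\right\},
\end{equation*}
while (\ref{OpI-2}) gives
\begin{equation*}
\mathfrak{Sp}_2(2,n+4)\supset\mathfrak{Sp}_2(2,n)+2=\left\{3,4,\dots,\left\lfloor\tfrac{6+n}{2}\right\rfloor+2\right\}=\left\{3,\dots,\left\lfloor\tfrac{6+(n+4)}{2}\right\rfloor\right\}.
\end{equation*}
Since $\lfloor(6+n)/2\rfloor\geq 3$ already for $n\geq 1$, taking the union of these two sets yields $\{1,2,\dots,\lfloor(6+(n+4))/2\rfloor\}$, which is the desired inclusion for $n+4$.

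Consequently, everything reduces to checking the four base cases $n=1,2,3,4$. The case $n=1$ is already verified: Figure \ref{Fig.(2,1)} shows $\mathfrak{Sp}_2(2,1)\supset\{1,2,3\}$, and the upper bound is $\lfloor 7/2\rfloor=3$. For $n=2$, I would begin from $\mathfrak{Sp}_2(2,0)=\{1,2\}$ (statement (4) of Theorem \ref{ThmI}, proved in \S 6) and use (\ref{OpI-0}) to get $\{1,2\}\subset\mathfrak{Sp}_2(2,2)$; the values $3$ and $4=\lfloor 8/2\rfloor$ would then be exhibited by two explicit filling configurations on a genus-$2$ surface with $2$ punctures. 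For $n=3$, I would use $\mathfrak{Sp}_2(2,1)\supset\{1,2,3\}$ together with (\ref{OpI-0}) to cover $\{1,2,3\}$, and exhibit a single picture realizing $4=\lfloor 9/2\rfloor$. For $n=4$, I would use $\mathfrak{Sp}_2(2,2)\supset\{1,2,3,4\}$ together with (\ref{OpI-0}) to cover $\{1,2,3,4\}$, and give one picture realizing $5=\lfloor 10/2\rfloor$.

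The routine part is the induction. The main obstacle is the base-case bookkeeping: producing explicit filling pairs $\{\alpha,\beta\}$ on $S$ of type $(2,n)$ for which exactly $k$ disjoint essential curves $a_1,\dots,a_k$ meet both $\alpha$ and $\beta$ at empty intersection, for the top values $k$ in each base case. The candidate pictures can be built by taking the configurations used to prove Lemma \ref{lemma:Both are same} in Cases (iii) and (iv) of \S 4 (which furnish the maximum $\lfloor(3g+n)/2\rfloor$), and then removing curves one by one while ensuring that the complementary components of $S\setminus\bigcup a_i$ containing $\alpha\cup\beta$ remain non-simple so that no geodesic count is accidentally inflated. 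As in the proofs of Lemmas \ref{planar Lemma} and \ref{torus Lemma}, once each base case is drawn, verification that no other $a\in\mathcal C(S)$ provides an additional length-$2$ geodesic follows from the fact that every complementary piece of $S\setminus\overline{X}$ is simple, hence contains no essential curve disjoint from both $\alpha$ and $\beta$ beyond those already listed.
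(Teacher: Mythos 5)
Your proposal is correct and follows essentially the same route as the paper: the same $n\mapsto n+4$ induction via (\ref{OpI-0}) and (\ref{OpI-2}), reduction to the base cases $n=1,2,3,4$, and for each base case the combination of (\ref{OpI-0}) applied to the already-known spectrum two punctures down with explicit filling configurations realizing the top value(s). The paper simply cites concrete figures for those top-value configurations where you describe how to build them, but the argument is the same.
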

\begin{proof}
	From the same inductive argument as in the previous lemmas, it suffices to show the statement for $n=1, 2, 3, 4$.
	\begin{itemize}
		\item $n=1$: We have already shown that $\mathfrak{Sp}_2(2, 1)\supset\{1, 2, 3\}$ in Figure \ref{Fig.(2,1)}.
Since $M_2(2, 1)=m(2, 1)=3$, we see that 
\begin{equation*}
	\mathfrak{Sp}_2(2, 1)=\{1, 2, 3\}= \left \{1, 2, \left \lfloor \frac{6+1}{2}\right \rfloor\right \}.
\end{equation*}
\item $n=2$: Since $\mathfrak{Sp}_2(2, 0)=\{1, 2\}$, it follows from (\ref{OpI-0}) that $\mathfrak{Sp}_2(2, 2)\supset\{1, 2\}$.
Figure \ref{Fig.(2,2)} shows that $\mathfrak{Sp}_2(2, 2)\ni 3, 4$.
We verify that
		\begin{equation*}
			\mathfrak{Sp}_2(2, 2)=\{1, 2, 3, 4\}= \left \{1, 2, 3, \left \lfloor \frac{6+2}{2}\right \rfloor\right \}.
		\end{equation*}
\begin{figure}[htbp]
	\centering
	\includegraphics[width=12cm]{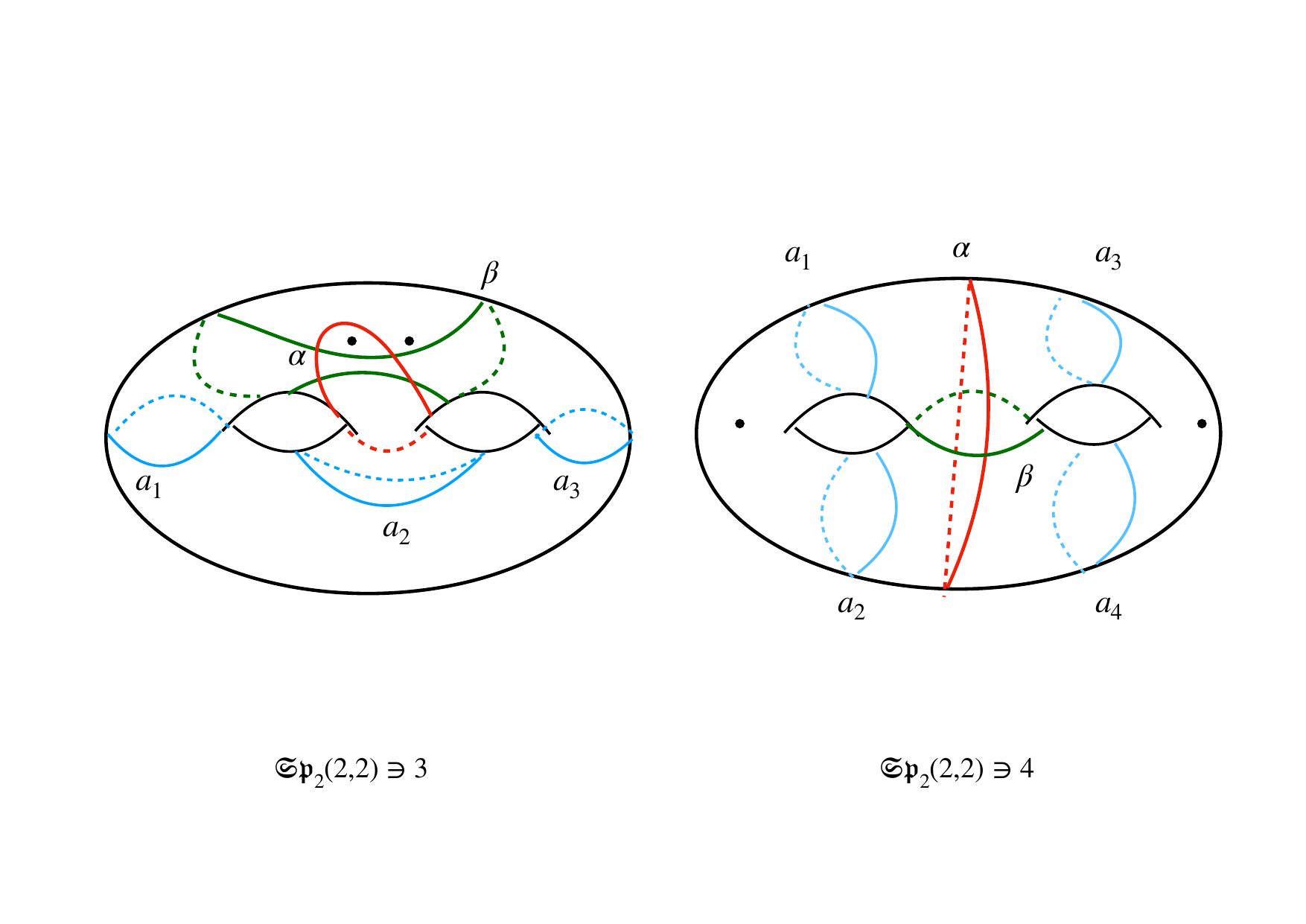}
	\caption{The $(2, 2)$ spectrum}
	\label{Fig.(2,2)}
\end{figure}

\item $n=3$: Since $\mathfrak{Sp}_2(2, 1)=\{1, 2, 3\}$, it follows from (\ref{OpI-0}) that $\mathfrak{Sp}_2(2, 3)\supset\{1, 2, 3\}$.
The picture of the left side in Figure \ref{Fig.(2,3)} shows that $\mathfrak{Sp}_2(2, 3)\ni 4$.
We verify that
		\begin{equation*}
			\mathfrak{Sp}_2(2, 3)=\{1, 2, 3, 4\}= \left \{1, 2, 3, \left \lfloor \frac{6+3}{2}\right \rfloor\right \}.
		\end{equation*}
		
\item $n=4$: Since $\mathfrak{Sp}_2(2, 2)=\{1, 2, 3, 4\}$, it follows from (\ref{OpI-0}) that $\mathfrak{Sp}_2(2, 3)\supset\{1, 2, 3, 4\}$.
The picture of the right side in Figure \ref{Fig.(2,3)} shows that $\mathfrak{Sp}_2(2, 4)\ni 5$.
We verify that
		\begin{equation*}
			\mathfrak{Sp}_2(2, 4)=\{1, 2, 3, 4, 5\}= \left \{1, 2, 3, \left \lfloor \frac{6+4}{2}\right \rfloor\right \}.
		\end{equation*}

\begin{figure}[htbp]
	\centering
	\includegraphics[width=12cm]{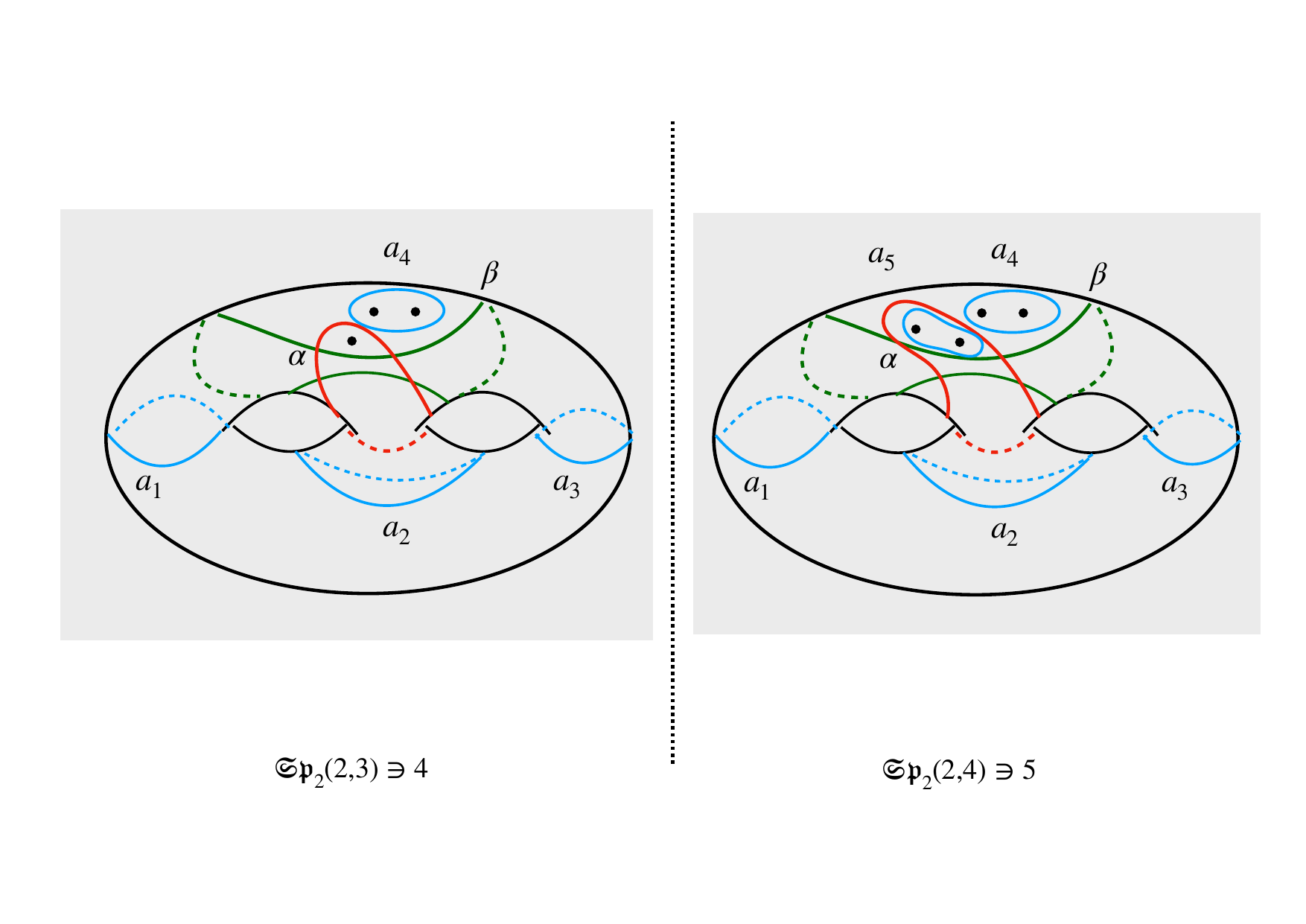}
	\caption{}
	\label{Fig.(2,3)}
\end{figure}
	\end{itemize}
Thus, the proof is completed.
\end{proof}

\begin{lemma}
\label{lemma5}
	If $g\geq 3$ and $n\geq g-3$, then
\begin{equation}
\label{eqn:half}
	\mathfrak{Sp}_2(g, n)=\left\{1, 2, \dots , \left\lfloor \frac{3g+n}{2}\right\rfloor\right\}.
\end{equation}
\end{lemma}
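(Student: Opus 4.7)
My plan is to prove Lemma~\ref{lemma5} by induction on the genus $g \geq 3$, taking as input the previously established genus-$\le 2$ spectra (Lemmas \ref{planar Lemma}, \ref{torus Lemma}, \ref{genus2 Lemma}, and Theorem \ref{ThmI}(2)--(4)) together with the upper bound $M_2(g,n) \leq \lfloor(3g+n)/2\rfloor$ from Section 4. Throughout, I will only use the basic operations of Section 5; what has to be produced is, for each $k \in \{1,\dots,\lfloor(3g+n)/2\rfloor\}$, a pair $\alpha,\beta$ with $|\mathcal G_S(\alpha,\beta)|=k$.

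For the inductive step $g \to g+1$ (whose admissibility range is $n \geq g-2$), the condition $n \geq g-2$ makes both $(g,n)$ and $(g,n-1)$ admissible for the inductive hypothesis. Operation II applied at $(g,n)$ and Operation III applied at $(g,n-1)$ yield
\[
\mathfrak{Sp}_2(g+1,n) \supset \left\{2,\dots,\left\lfloor\frac{3g+n}{2}\right\rfloor+1\right\}\cup\left\{3,\dots,\left\lfloor\frac{3g+n-1}{2}\right\rfloor+2\right\}.
\]
A parity check gives $\lfloor(3g+n-1)/2\rfloor+2=\lfloor(3(g+1)+n)/2\rfloor$, so Operation III reaches the target maximum, and Operation II fills in the value $2$. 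Adjoining $\{1\}$ from Theorem \ref{thm:IJK} produces the gap-free interval $\{1,2,\dots,\lfloor(3(g+1)+n)/2\rfloor\}$, matching the upper bound and closing the step.

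The base case $g = 3$ is carried out by an inner induction on $n \geq 0$, mirroring the proofs of Lemmas \ref{planar Lemma}, \ref{torus Lemma}, \ref{genus2 Lemma}. For $n=2$ and $n=3$, Operations II and III applied to the genus-$2$ spectra $\mathfrak{Sp}_2(2,n)$ and $\mathfrak{Sp}_2(2,n-1)$ (together with Theorem \ref{thm:IJK}) already recover $\{1,\dots,\lfloor(9+n)/2\rfloor\}$. For $n\geq 4$, equations (\ref{OpI-0}) and (\ref{OpI-2}) of Operation I promote the result from $n-4$ to $n$, covering all remaining cases by the same pattern used in the earlier lemmas.

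The main obstacle is the pair of corner cases $n=0$ and $n=1$ at genus $3$. Because $\mathfrak{Sp}_2(2,0)=\{1,2\}$ and $\mathfrak{Sp}_2(2,1)=\{1,2,3\}$ sit strictly below the bound $\lfloor(3\cdot 2+n)/2\rfloor$, neither Operation II nor Operation III from genus $2$ reaches the targets $4$ and $5$ in $\mathfrak{Sp}_2(3,0)$ and $\mathfrak{Sp}_2(3,1)$. Hence the inductive bootstrapping breaks down at these corners, and explicit filling pairs on the closed genus-$3$ surface and the once-punctured genus-$3$ surface must be exhibited realizing $4$ and $5$ length-$2$ geodesics respectively---pictures analogous to those in \S 6, which can be read off the Case (iii)/(ii) constructions of \S 4. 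Once these two figures are supplied, the rest of the argument runs mechanically.
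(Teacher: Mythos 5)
Your proposal is correct and follows essentially the same route as the paper: an induction on genus driven by Operations II and III that bottoms out in the genus-$\le 2$ spectra, with $(3,0)$ and $(3,1)$ correctly singled out as the corner cases requiring explicit filling pairs realizing $4$ and $5$ geodesics (the paper supplies these as Figure \ref{Fig.(3,0)}; your observation that they can be read off the Case (ii)/(iii) constructions of \S 4, i.e.\ Lemma \ref{lemma:Both are same}, is a valid substitute). The only differences are bookkeeping: the paper fills the low end of each interval with $\{1,\dots,g\}$ from Lemma \ref{FirstEstimate} and uses Operation III alone in the inductive step, and it handles all $(3,n)$, $n\ge 2$, directly from Lemma \ref{genus2 Lemma} without your inner Operation-I induction on $n$.
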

\begin{proof}
	First, we show (\ref{eqn:half}) for $(g, n)=(3, 0)$ and $(3, 1)$.
	
	We have already known that $\mathfrak{Sp}_2(3, 0), \mathfrak{Sp}_2(3, 1)\supset \{1, 2, 3\}$ (Lemma \ref{FirstEstimate}).
	From Lemma \ref{Lemma4}, we also know that $\mathfrak{Sp}_2(2, 1)\ni 4$.
	Therefore, $\mathfrak{Sp}_2(3, 2)\ni 4$ because of (\ref{OpII}) of Operation II.
	
	The picture of the left side in Figure \ref{Fig.(3,0)} shows $\mathfrak{Sp}_2(3,0)\ni 4$ and the right side one shows $\mathfrak{Sp}_2(3, 1)\ni 5$.
	Since $\lfloor\frac{3\cdot3}{2}\rfloor=4$ and $\lfloor\frac{3\cdot 3+1}{2}\rfloor =5$, we obtain the desired result for $(g, n)=(3, 0)$ and $(3, 1)$.
	\begin{figure}[htbp]
	\centering
	\includegraphics[width=12cm]{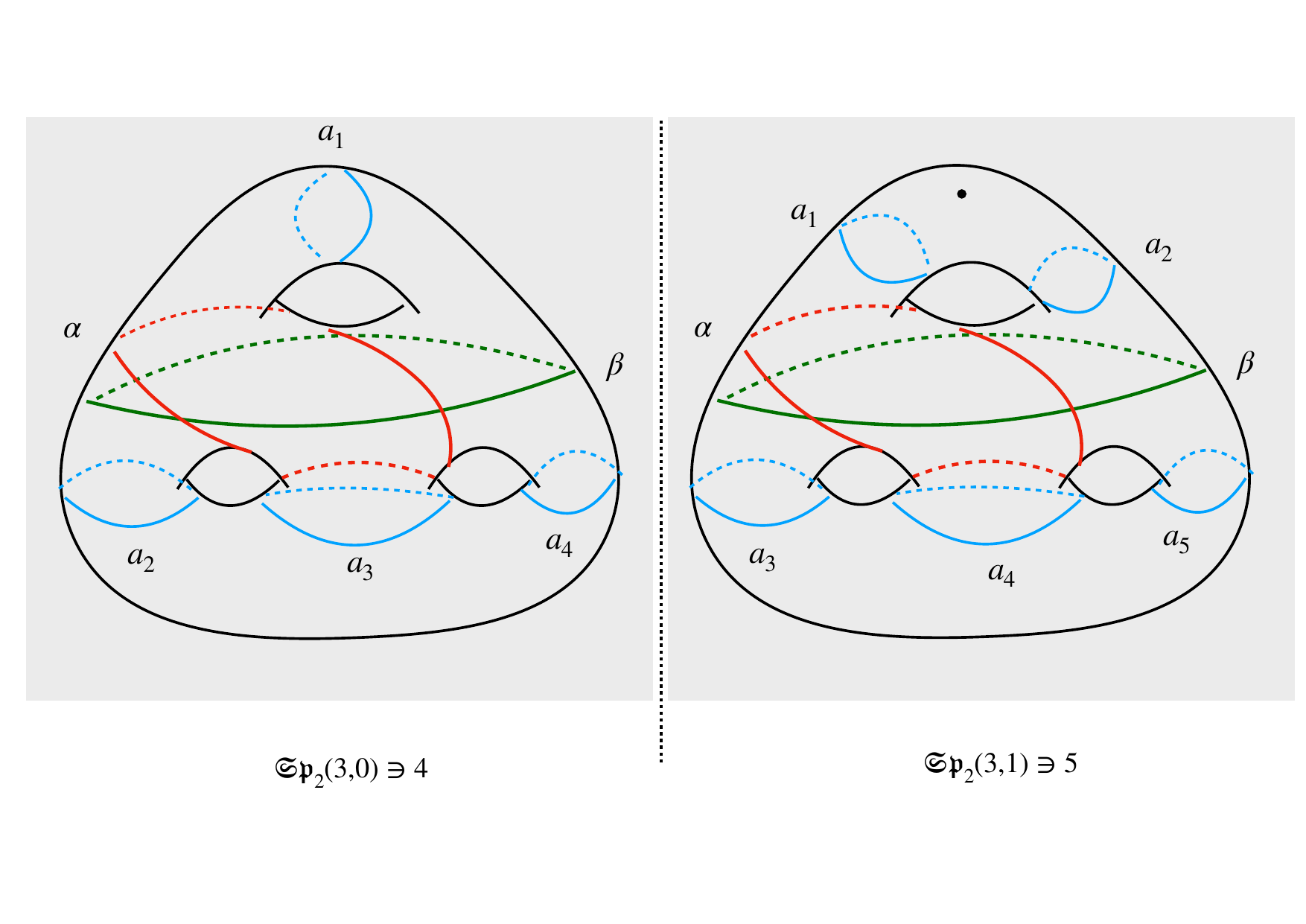}
	\caption{}
	\label{Fig.(3,0)}
\end{figure}

Next, we consider the case where $(g, n)=(3, n)$ for $n\geq 2$.

From Lemma \ref{FirstEstimate} and Lemma \ref{Lemma4}, we have
\begin{equation*}
	\mathfrak{Sp}_2(3, n)\supset\{1, 2, 3\}, \ \mathfrak{Sp}_2(2, n)=\left\{1, 2, \dots , \left\lfloor\frac{6+n}{2}\right\rfloor\right\}
\end{equation*}
if $n\geq 1$.
It follows from (\ref{OpIII}) of Operation III that
\begin{eqnarray*}
		\mathfrak{Sp}_2(3, n)&=&\mathfrak{Sp}_2(2+1, (n-1)+1)\supset\left\{1, \dots , \left\lfloor\frac{5+n}{2}\right\rfloor\right\}+2 \\
		&=&\left\{3, \dots , \left\lfloor\frac{9+n}{2}\right\rfloor\right\}.
\end{eqnarray*}
Thus, we obtain
\begin{equation*}
	\mathfrak{Sp}_2(3, n)\supset\{1, 2, 3\}\cup\left\{3, \dots , \left\lfloor\frac{9+n}{2}\right\rfloor\right\}=\left\{1, \dots , \left\lfloor\frac{9+n}{2}\right\rfloor\right\}.
\end{equation*}
Since $M_2(3, n)=\lfloor\frac{9+n}{2}\rfloor$, overall we verify that
\begin{equation*}
	\mathfrak{Sp}_2(3, n)=\left\{1, \dots , \left\lfloor\frac{9+n}{2}\right\rfloor\right\}
\end{equation*}
for $n\geq 0$.

The same argument works to show (\ref{eqn:half}) for $(g, n)=(4, n)$ ($n\geq 1$).
Indeed, noting that $\mathfrak{Sp}_2(4, n)\supset\mathfrak{Sp}_2(3, n-1)+2$, we see
\begin{eqnarray*}
		\mathfrak{Sp}_2(4, n)&=&\mathfrak{Sp}_2(3+1, (n-1)+1)\supset\left\{1, \dots , \left\lfloor\frac{8+n}{2}\right\rfloor\right\}+2 \\
		&=&\left\{3, \dots , \left\lfloor\frac{12+n}{2}\right\rfloor\right\}.
\end{eqnarray*}
Hence, we conclude that
\begin{equation*}
	\mathfrak{Sp}_2(4, n)=\left\{1, \dots , \left\lfloor\frac{12+n}{2}\right\rfloor\right\}
\end{equation*}
by the same reason as above.

Repeating these arguments, we may show that (\ref{eqn:half}) is valid if $g\ge 3$ and $n\geq g-3$.
\end{proof}

Now, we consider the rest of half to complete the proof of Theorem \ref{ThmI}.

\begin{lemma}
\label{Last Lemma}
	If $g\geq 4$ and $0\leq n< g-3$, then
	\begin{equation}
	\label{eqn:half2}
		\mathfrak{Sp}_2(g, n)=\left\{1, \dots , \left\lfloor\frac{3g+n}{2}\right\rfloor\right\}.
	\end{equation}
\end{lemma}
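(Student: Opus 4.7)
The plan is to run an induction on $g$ with $n$ held fixed, analogous to the inductive argument in the proof of Lemma \ref{lemma5}, but now using Operation II and Operation IV in tandem and starting from the boundary case $n=g-4$ which is already handled by Lemma \ref{lemma5}. Throughout I freely use $1\in\mathfrak{Sp}_2(g,n)$ (Theorem \ref{thm:IJK}), the lower bound $\{1,\dots,g\}\subset\mathfrak{Sp}_2(g,n)$ (Lemma \ref{FirstEstimate}), and the upper bound $\mathfrak{Sp}_2(g,n)\subset\{1,\dots,\lfloor(3g+n)/2\rfloor\}$ established in \S4.

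Write $M_g:=\lfloor(3g+n)/2\rfloor$. For the inductive step I assume $\mathfrak{Sp}_2(g-1,n)=\{1,\dots,M_{g-1}\}$, which holds by Lemma \ref{lemma5} when $n=g-4$ (so $n=(g-1)-3$) and by the inductive hypothesis when $n<g-4$. Operation II (equation (\ref{OpII})) then gives $\mathfrak{Sp}_2(g,n)\supset\{2,\dots,M_{g-1}+1\}$, and together with $1$ we obtain $\{1,\dots,M_{g-1}+1\}\subset\mathfrak{Sp}_2(g,n)$. An elementary parity check shows $M_g-M_{g-1}=1$ when $3g+n$ is odd and $M_g-M_{g-1}=2$ when $3g+n$ is even, so Operation II alone already closes the argument in the odd-parity case.

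When $3g+n$ is even only the top value $M_g$ remains, and for this I plan to apply Operation IV (equation (\ref{OpIV})) to $(g-4,n)$. The identity $M_{g-4}+6=M_g$ holds whenever $(g-4,n)$ is not one of the exceptional surfaces from Theorem \ref{ThmI}(2)--(4), so the maximum of $\mathfrak{Sp}_2(g-4,n)$, available by induction or by Lemma \ref{lemma5}, is transported to the desired $M_g\in\mathfrak{Sp}_2(g,n)$.

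The hard part will be the small initial cases where Operation IV is not available, namely $(g,n)=(4,0)$, $(5,1)$, and $(6,0)$: in the first two, $(g-4,n)$ is sporadic, while in the third $(g-4,n)=(2,0)$ is exceptional so $M_{g-4}+6=8<9=M_g$. I expect to dispatch $(5,1)$ by applying Operation III (equation (\ref{OpIII})) to $(4,0)$, obtaining $\mathfrak{Sp}_2(5,1)\supset\mathfrak{Sp}_2(4,0)+2\supset\{3,\dots,8\}$, which together with the Op II contribution from $(4,1)$ and Lemma \ref{FirstEstimate} completes the spectrum. The two remaining base cases $(4,0)$ and $(6,0)$ require exhibiting explicit pairs $(\alpha,\beta)$ realizing $|\mathcal G_S(\alpha,\beta)|=6$ and $=9$, which I would present via figures in the style of those used in \S6 and earlier in \S7. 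Once these finite constructions are in place the induction carries through and establishes (\ref{eqn:half2}).
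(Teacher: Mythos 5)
Your argument is correct, and it bottoms out at exactly the same two irreducible inputs as the paper's proof (explicit configurations realizing $6\in\mathfrak{Sp}_2(4,0)$ and $9\in\mathfrak{Sp}_2(6,0)$, which the paper supplies in Figure \ref{Fig.(4,0)}), but the induction is organized differently. The paper first settles the whole column $n=0$ by a stride-$4$ induction in genus using Operation IV to transport the entire block $\mathfrak{Sp}_2(g,0)+6$, with $g=3,4,5,6$ as base cases, and then reaches every $(g,n)$ with $n\leq g-4$ by walking the diagonal $(g-1,n-1)\to(g,n)$ with Operation III, pulling each case back to $(g-n,0)$. You instead fix $n$ and increment $g$ by one using Operation II, which (together with $1\in\mathfrak{Sp}_2$ and the upper bound from \S 4) fills the whole spectrum at once when $3g+n$ is odd and all but the top value when $3g+n$ is even; Operation IV is then invoked only to produce the single missing maximum from $(g-4,n)$. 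Your parity computation $M_g-M_{g-1}\in\{1,2\}$ is right, and your enumeration of where the Operation IV step breaks down --- $(4,0)$ and $(5,1)$ because $(g-4,n)$ is sporadic, $(6,0)$ because $(2,0)$ has maximum $2$ rather than $\lfloor 6/2\rfloor$ --- is exhaustive, since for $g\geq 7$ the surface $(g-4,n)$ with $n\leq g-4$ always falls under Lemmas \ref{genus2 Lemma}, \ref{lemma5}, or the inductive hypothesis. Your treatment of $(5,1)$ via Operation III applied to $(4,0)$ coincides with what the paper does for the column $n=1$. What your reorganization buys is a cleaner separation of roles (Operation II does almost all the work, Operation IV is a one-value patch) and explicit parity bookkeeping; what it costs is that the use of Operation IV now depends on already knowing the full spectrum of $(g-4,n)$ for varying $n$, so the verification that no further exceptional cases arise is slightly more delicate than in the paper, where Operation IV is only ever applied along $n=0$. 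One small imprecision: the identity $\lfloor 3(g-4)+n\rfloor/2$-floor-plus-$6$ equals $\lfloor(3g+n)/2\rfloor$ is pure arithmetic and always holds; what fails for the exceptional $(2,0)$ is that $\max\mathfrak{Sp}_2(2,0)$ is $2$ rather than $\lfloor 6/2\rfloor=3$, and your text should distinguish these two statements.
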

\begin{proof}
First, we show the statement for $(g, 0)$ $(g\geq 4)$.
 To show it, we note an inductive argument similar to the previous arguments.

Suppose that (\ref{eqn:half2}) is true for some $(g, 0)$ with $g\geq 3$.
Then, we have
\begin{equation*}
	\mathfrak{Sp}_2(g+4, 0)\supset \mathfrak{Sp}_2(g, 0)+6,
\end{equation*}
from (\ref{OpIV}) of Operation IV.
Since $\lfloor\frac{3g}{2}\rfloor+6=\lfloor\frac{3(g+4)}{2}\rfloor$,
we obtain
\begin{equation*}
	\mathfrak{Sp}_2(g+4, 0)\supset\left\{7, \dots , \left\lfloor\frac{3(g+4)}{2}\right\rfloor\right\}.
\end{equation*}
From Lemma \ref{FirstEstimate}, we know that
\begin{equation*}
	\mathfrak{Sp}_2(g+4, 0)\supset\{1, 2, \dots , g+4\}\supset\{1, 2, \dots, 7\}.
\end{equation*}
Thus, we have
\begin{eqnarray*}
	\mathfrak{Sp}_2(g+4, 0)&\supset &\{1, 2, \dots, 7\}\cup\left\{7, \dots , \left\lfloor\frac{3(g+4)}{2}\right\rfloor\right\} \\
	&=& \left\{1, \dots , \left\lfloor\frac{3(g+4)}{2}\right\rfloor\right\}.
\end{eqnarray*}
Since $M_2(g+4, 0)=\lfloor\frac{3(g+4)}{2}\rfloor$, we verify that
\begin{equation*}
	\mathfrak{Sp}_2(g+4, 0)=\left\{1, \dots , \left\lfloor\frac{3(g+4)}{2}\right\rfloor\right\}.
\end{equation*}
We have already shown that the statement is true for $(3, 0)$ in the previous lemma.
Therefore, it suffices to show (\ref{eqn:half2}) for the case where $g=4, 5, 6$.

\begin{itemize}
	\item $g=4$: As we have already known that  $\mathfrak{Sp}_2(4, 0), \ \mathfrak{Sp}_2(3, 0) \supset \{1, 2, 3, 4\}$, we have
\begin{equation*}
	\mathfrak{Sp}_2(4, 0)\supset\{1, 2, 3, 4, 5\}
\end{equation*}
from (\ref{OpII}) of Operation II.
The picture of the left side in Figure \ref{Fig.(4,0)} shows that $\mathfrak{Sp}_2(4, 0)\ni 6=\lfloor\frac{3\cdot 4}{2}\rfloor$.

	\begin{figure}
	\centering
	\includegraphics[width=12cm]{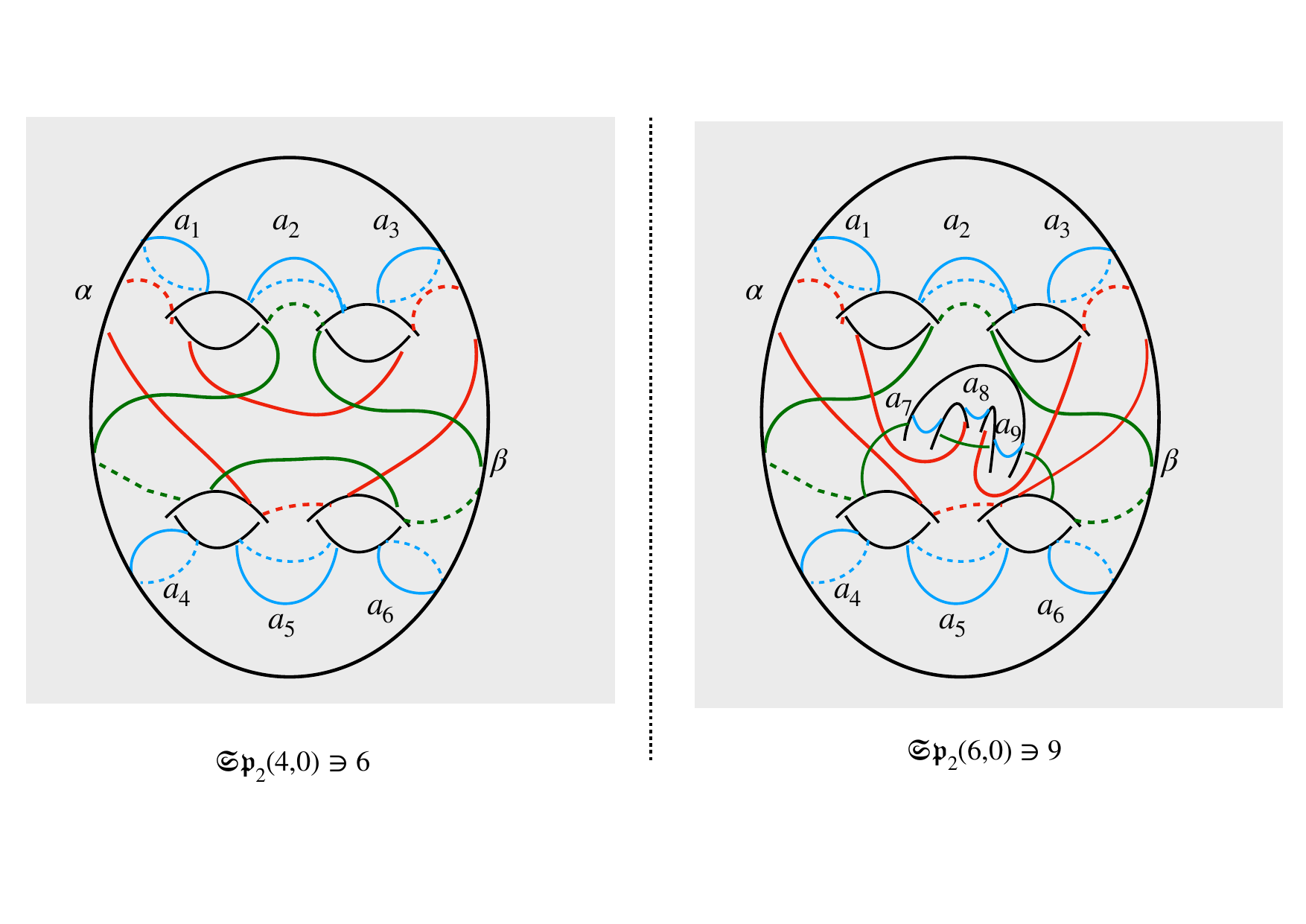}
	\caption{}
	\label{Fig.(4,0)}
\end{figure}

Thus, we have
\begin{equation*}
	\mathfrak{Sp}_2(4, 0)=\{1, 2, 3, 4, 5, 6\}.
\end{equation*}
\item $g=5$: Applying Operation II to $\mathfrak{Sp}_2(4, 0)$, we obtain $\mathfrak{Sp}_2(5, 0)\ni 7=\lfloor\frac{3\cdot 5}{2}\rfloor$.
Hence, we verify that
\begin{equation*}
	\mathfrak{Sp}_2(5, 0)=\{1, 2, 3, 4, 5, 6, 7\}.
\end{equation*}
\item $g=6$: We have $\mathfrak{Sp}_2(6, 0)\supset\{1, 2, 3, 4, 5, 6, 7, 8\}$ as before.
A bit complicated picture of the right side in Figure \ref{Fig.(4,0)} shows that $\mathfrak{Sp}_2(0, 6)\ni 9=\lfloor\frac{3\cdot 6}{2}\rfloor$.
Hence, we verify that
\begin{equation*}
	\mathfrak{Sp}_2(6, 0)=\{1, 2, 3, 4, 5, 6, 7, 8, 9\}.
\end{equation*}
\end{itemize}
We prove that (\ref{eqn:half2}) is valid for $(g, 0)$ $(g\geq 4)$.

To complete the proof of the lemma, we use Operation III.
We may show the statement for $(g, 1)$ $(g\geq 4)$.
Indeed, as $(g, 1)=((g-1)+1, 0+1)$, it follows from (\ref{OpIII}) of Operation III that
\begin{eqnarray*}
	\mathfrak{Sp}_2(g, 1)&\supset&\mathfrak{Sp}_2(g-1, 0)+2\supset\left\{3, 4, \dots , \left\lfloor\frac{3(g-1)}{2}\right\rfloor +2\right\} \\
	&=&\left\{3, 4, \dots , \left\lfloor\frac{3g+1}{2}\right\rfloor\right\}.
\end{eqnarray*}
Thus, we obtain
\begin{equation*}
	\mathfrak{Sp}_2(g, 1)=\left\{1, 2, \dots , \left\lfloor\frac{3g+1}{2}\right\rfloor\right\}
\end{equation*}
as before.
Repeating this argument, we verify that
\begin{equation*}
	\mathfrak{Sp}_2(g, n)=\left\{1, 2, \dots , \left\lfloor\frac{3g+n}{2}\right\rfloor\right\}
\end{equation*}
for $n\in\mathbb N$ with $n\leq g-3$ and the proof of the lemma is completed.
\end{proof}

Combining Lemmas \ref{planar Lemma}, \ref{torus Lemma}, \ref{genus2 Lemma},  \ref{lemma5} and \ref{Last Lemma}, we complete the proof of Theorem \ref{ThmI} for $\mathfrak{Sp}_2(g, n)$.
Since $\mathfrak{Sp}_2(g, n)\subset\mathfrak{Sp}_2^T(g, n)$ and $M_2(g, n)=M_2^T(g, n)$ (see Remark \ref{Remark}), we obtain $\mathfrak{Sp}_2(g, n)=\mathfrak{Sp}_2^T(g, n)$. It shows that Theorem \ref{ThmI} is true for $\mathfrak{Sp}_2^T(g, n)$. \qed

\end{document}